\documentclass[a4paper,11pt]{article}
\usepackage{amsmath,amssymb,graphicx,amsthm}

\usepackage[hidelinks]{hyperref}
\usepackage{algorithm}
\usepackage{algpseudocode}
\usepackage{subcaption}
\usepackage{tikz}
\usetikzlibrary{calc}
\usepackage{tkz-berge}
\usetikzlibrary{arrows.meta}
\usepackage{placeins}
\usepackage{array}
\usetikzlibrary{arrows.meta,decorations.markings,decorations}
\usepackage{mathtools}
\usepackage{multirow}
\usepackage{enumitem}
\usepackage{makecell}
\usepackage{multicol}

\usepackage{pgfplots}
\pgfplotsset{width=8.5cm}
\pgfplotsset{compat=1.8}

\usepackage[
top    = 2.675cm,
bottom = 3.4cm,
left   = 2.5cm,
right  = 2.5cm]{geometry}

\usepackage[margin=1cm]{caption}

\newtheorem{THM}{THEOREM}[section]

\newtheorem{theorem}[THM]{Theorem}

\newtheorem{claim}[THM]{Claim}
\newtheorem{corollary}[THM]{Corollary}

\theoremstyle{definition}
\newtheorem{remark}{Remark}[section]
\newtheorem{problem}{Problem}   

\newcommand{\FBOX}{\hspace*{\fill}$\rule{0.19cm}{0.19cm}$}

\newenvironment{subproof}[1][\proofname]{%
  \begin{proof}[#1]%
}{%
  \end{proof}%
}

\setlength{\marginparwidth}{2.25cm}
\usepackage[textsize=tiny]{todonotes}

\def\R{\mathbb R}

\def\Z{\mathbb Z}

\def\1{\mathbb 1}

\def\niceArrow{-{Stealth[length=2.25mm]}}

\usepackage{graphicx,accents}
\usepackage{esvect}
\makeatletter
\DeclareRobustCommand{\cev}[1]{%
  {\mathpalette\do@cev{#1}}%
}
\newcommand{\do@cev}[2]{%
  \vbox{\offinterlineskip
    \sbox\z@{$\m@th#1 x$}%
    \ialign{##\cr
      \hidewidth\reflectbox{$\m@th#1\vec{}\mkern0mu$}\hidewidth\cr
      \noalign{\kern-\ht\z@}
      $\m@th#1#2$\cr
    }
  }
}
\makeatother

\newcolumntype{x}[1]{>{\centering\let\newline\\\arraybackslash\hspace{0pt}}m{#1}}

\usetikzlibrary{snakes}
\tikzset{wavy/.style={decorate,decoration={snake,amplitude=.4mm,segment length=2mm,post length=0mm,pre length=0mm},line width=.5}}

\tikzset{paralleledge/.style={to path={
      \pgfextra{%
        \pgfmathsetmacro{\startf}{-(#1-1)/2}
        \pgfmathsetmacro{\endf}{-\startf}
        \pgfmathsetmacro{\stepf}{\startf+1}}
      \ifnum 1=#1 -- (\tikztotarget)  \else
        let \p{mid}=($(\tikztostart)!0.5!(\tikztotarget)$)
        in
        \foreach \i in {\startf,\stepf,...,\endf}
        {%
          (\tikztostart) .. controls ($ (\p{mid})!\i*6pt!90:(\tikztotarget) $) .. (\tikztotarget)
        }
      \fi
      \tikztonodes
    }
  }
}

\tikzset{circleAroundEdges/.style n args={3}{
    decorate, decoration = {markings, mark=at position #3 with {\draw[double=black,white,double distance=.6pt] (0,#2) arc [x radius = #1, y radius = #2, start angle = 90, end angle = -125];} , mark=at position #3 with{\draw[double=black,white,double distance=.6pt] (0,-#2) arc [x radius = #1, y radius = #2, start angle = 270, end angle = 360+125];}}
  }
}

\usepackage{titling}
\usepackage[hang,flushmargin]{footmisc}
\settowidth{\thanksmarkwidth}{*}
\setlength{\thanksmargin}{-\thanksmarkwidth}

\usepackage[yyyymmdd]{datetime}

\begin{document}

\title{\vspace{-15mm}Vertex-ordering and arc-partitioning problems}

\author{N\'ora A.\ Borsik \thanks{Department of Operations Research, E\"otv\"os Lor\'and University, P\'azm\'any P.\ s.\ 1/c, Budapest, Hungary. E-mail: {\tt nborsik@gmail.com}}
  \and P\'eter Madarasi \thanks{HUN-REN Alfr\'ed R\'enyi Institute of Mathematics, and Department of Operations Research, E\"otv\"os Lor\'and University, P\'azm\'any P.\ s.\ 1/c, Budapest, Hungary. E-mail: {\tt madarasip@staff.elte.hu} (corresponding author)}
}

\date{\vspace*{-44pt}}

\maketitle

\begin{abstract}
  We study vertex-ordering problems in loop-free digraphs subject to constraints on the left-going arcs, focusing on existence conditions and computational complexity.
  As an intriguing special case, we explore vertex-specific lower and upper bounds on the left-outdegrees and right-indegrees.
  We show, for example, that deciding whether the left-going arcs can form an in-branching is solvable in polynomial time and provide a necessary and sufficient condition, while the analogous problem for an in-arborescence turns out to be NP-complete.
  We also consider a weighted variant that enforces vertex-specific lower and upper bounds on the $w$\nobreakdash-weighted left-outdegrees, which is particularly relevant in applications.
  Furthermore, we investigate the connection between ordering problems and their arc-partitioning counterparts, where one seeks to partition the arcs into a subgraph from a specific digraph family and an acyclic subgraph ---  equivalently, one seeks to cover all directed cycles with a subgraph belonging to a specific family.
  For the family of in-branchings, unions of disjoint dipaths, and matchings, the two formulations coincide, whereas for in-arborescences, dipaths, Hamiltonian dipaths, and perfect matchings the formulations diverge.
  Our results yield a comprehensive complexity landscape, unify diverse special cases and variants, clarify the algorithmic boundaries of ordered digraphs, and relate them to broader topics including graph degeneracy, acyclic orientations, influence propagation, and rank aggregation.\\

  \noindent{\bf Keywords:} vertex ordering, arc partitioning, graph decomposition, rank aggregation, graph degeneracy, NP-completeness
\end{abstract}

\section{Introduction}\label{sec:introduction}

We study ordering and partitioning problems in digraphs, motivated by classical notions such as graph degeneracy and acyclic orientations.
Our central question is whether the vertices of a digraph can be ordered so that the set of left-going arcs forms a subgraph with prescribed structural properties.
We formalize this through four fundamental problems.

\begin{problem}[Vertex ordering for a digraph family $\mathcal{F}$]\label{prob:vertexOrdering}
  Given a loop-free digraph $D=(V,A)$ and a family $\mathcal{F}$ of digraphs, decide whether the vertices of $D$ can be ordered such that the set of left-going arcs forms a subgraph belonging to $\mathcal{F}$.
\end{problem}

This problem captures a wide range of natural problems, which show a rich complexity landscape.
For example, we prove that the case when $\mathcal{F}$ is the family of in-branchings can be solved in polynomial time, while the case of in-arborescences is NP-complete.

The following particularly interesting special case arises when $\mathcal{F}$ consists of indegree- and outdegree-bounded (acyclic) digraphs.
\begin{problem}[Indegree- and outdegree-bounded ordering]\label{prob:degreeBounded}
  Given a loop-free digraph $D=(V,A)$, lower bound functions $f_{\delta}, f_{\varrho} : V \to \Z_+ \cup \{-\infty\}$, and upper bound functions $g_{\delta}, g_{\varrho} : V \to \Z_+ \cup \{+\infty\}$, decide whether there exists an ordering of the vertices such that
  \[
    f_{\delta}(v) \le \delta^{\ell}(v) \le g_{\delta}(v) \hspace{1cm} \text{ and } \hspace{1cm} f_{\varrho}(v) \le \varrho^{r}(v) \le g_{\varrho}(v)
  \]
  hold for each $v\in V$, where $\delta^{\ell}(v)$ and $ \varrho^{r}(v)$ denote the left-outdegree and right-indegree of $v$, respectively.
\end{problem}

Observe that any vertex-ordering problem with simultaneous bounds for the left- and right-outdegrees and the left- and right-indegrees can be easily reduced to Problem~\ref{prob:degreeBounded}.

To broaden the scope of applicability, we introduce a weighted version of left-outdegree bounds, which generalizes the in-branching case and naturally connects to rank aggregation.
\begin{problem}[$(f,g;\sum w)$-bounded ordering]\label{prob:fgBounded}
  Given a loop-free digraph $D = (V, A)$, a lower bound function $f: V \to \R_+ \cup \{-\infty\}$, an upper bound function $g: V \to \R_+ \cup \{+\infty\}$, and a weight function $w: A \to \R_+ \cup \{+\infty\}$, decide whether there exists an ordering of the vertices such that
  \[
    f(v) \leq \delta^{\ell}_w(v) \leq g(v),
  \]
  holds for each $v\in V$, where $\delta^{\ell}_w(v)$ denotes the weighted left-outdegree of $v$.
  For $w \equiv 1$, the problem is referred to as the \emph{$(f, g)$-bounded ordering problem}.
\end{problem}

Finally, any feasible vertex order for Problem~\ref{prob:vertexOrdering} partitions the arcs into left- and right-going arcs, which leads us to the following arc-partitioning problem.
\begin{problem}[Arc partitioning for a digraph family $\mathcal{F}$]\label{prob:arcPartition}
  Given a loop-free digraph $D=(V,A)$ and a family $\mathcal{F}$ of digraphs, decide whether $A$ can be partitioned into two parts: one belonging to $\mathcal{F}$ and an acyclic subgraph.
\end{problem}
For several natural families of acyclic digraphs --- such as in-branchings, matchings (directed arbitrarily), and unions of disjoint dipaths --- the vertex-ordering and arc-partitioning formulations coincide.
For others --- including in-arborescences, perfect matchings (directed arbitrarily), dipaths, and Hamiltonian dipaths ---, the two perspectives diverge, leading to diverse computational behaviors.

Note that Problem~\ref{prob:arcPartition} is equivalent to deciding whether the directed cycles in $D$ can be covered with a subgraph belonging to the family $\mathcal{F}$.

\medskip

Now we provide a brief overview of problems related to the subject of this paper.

\paragraph{Degeneracy of graphs}
An undirected graph $G = (V, E)$ is called \emph{$k$\nobreakdash-degenerate} if, for every subset $V' \subseteq V$, the minimum degree in the induced subgraph $G[V']$ is at most $k$~\cite{lick1970k}.
The \emph{degeneracy} of a graph $G$ is the smallest number $k$ for which $G$ is $k$\nobreakdash-degenerate.
It is well-known that the degeneracy of a graph can be computed in linear time~\cite{matula1983smallest}.
Notice that a graph $G$ is $k$\nobreakdash-degenerate if and only if there exists an ordering of the vertices such that the left-degree of each vertex is at most $k$.
An undirected graph can be considered as a symmetric digraph, for which the $(-\infty, g)$\nobreakdash-bounded ordering problem with $g \equiv k$ is solvable if and only if the graph is $k$-degenerate.
The $(-\infty, g)$\nobreakdash-bounded ordering problem can thus be seen as a generalization of degeneracy for digraphs.

\paragraph{Degree-constrained acyclic orientation problem}
In~\cite{kiraly2018acyclic}, the authors studied a problem closely related to the $(f,g)$\nobreakdash-bounded ordering problem.
Given an undirected graph $G = (V, E)$ and two functions $f': V \to \Z_+$ and $g': V \to \Z_+$, where $f'(v) + g'(v) \leq d(v)$ for each vertex $v \in V$ (with $d(v)$ denoting the degree of $v$), the goal is to determine whether $G$ has an acyclic orientation such that $f'(v) \leq \varrho(v) \leq d(v) - g'(v)$ for each $v \in V$, where $\varrho(v)$ represents the indegree of vertex $v$ in the acyclic orientation.
Such an orientation is referred to as an \emph{$(f', g')$\nobreakdash-bounded acyclic orientation} of $G$.
Note that this problem is equivalent to finding an ordering of the vertices such that the left-degree of each vertex $v$ is bounded below by $f'(v)$ and above by $(d(v) - g'(v))$.

Considering undirected graphs as symmetric digraphs, the $(f,g)$\nobreakdash-bounded ordering problem generalizes the $(f', g')$\nobreakdash-bounded acyclic orientation problem to digraphs.
Specifically, $G$ has an $(f', g')$\nobreakdash-bounded acyclic orientation if and only if the corresponding symmetric digraph $D$ has an $(f,g)$\nobreakdash-bounded ordering, where $f \equiv f'$ and $g \equiv d - g'$.
The topological order of an $(f', g')$\nobreakdash-bounded acyclic orientation of $G$ corresponds to an $(f,g)$\nobreakdash-bounded ordering of $D$, and the arcs going from right to left in the $(f,g)$\nobreakdash-bounded order of $D$ form an $(f', g')$\nobreakdash-bounded acyclic orientation of $G$.
The $(f', g')$\nobreakdash-bounded acyclic orientation problem was proven to be NP-complete~\cite{kiraly2018acyclic}.
This immediately implies the following.
\begin{corollary}\label{cor:LowerAndUpper}
  The $(f,g)$\nobreakdash-bounded ordering problem is NP-complete even when restricted to symmetric digraphs.
  \FBOX
\end{corollary}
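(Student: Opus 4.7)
The plan is to establish the corollary by a direct polynomial-time reduction from the $(f',g')$-bounded acyclic orientation problem, whose NP-completeness is cited from \cite{kiraly2018acyclic}. Membership in NP is immediate: a vertex ordering is a succinct certificate whose feasibility can be checked by simply scanning the arcs once and counting left-outdegrees and right-indegrees against $f$ and $g$.

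For hardness, given an instance $(G=(V,E), f', g')$ of the $(f',g')$-bounded acyclic orientation problem, I would construct the symmetric digraph $D=(V,A)$ obtained by replacing each edge $uv\in E$ with the two opposite arcs $uv$ and $vu$, and set $f\equiv f'$ and $g(v) := d(v)-g'(v)$ for every $v\in V$ (with $d(v)$ the degree of $v$ in $G$). The construction is clearly polynomial.

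The key step is to verify the equivalence of the two instances. Given an $(f',g')$-bounded acyclic orientation of $G$, take any topological ordering of the resulting acyclic digraph; in the corresponding ordering of $D$, for every edge $uv\in E$ exactly one of the two symmetric arcs is left-going, and that arc is left-going at its head (the later vertex). Hence the left-outdegree of $v$ in $D$ equals the indegree of $v$ in the acyclic orientation, so it lies in $[f'(v), d(v)-g'(v)] = [f(v), g(v)]$. Conversely, given an $(f,g)$-bounded ordering of $D$, for each unordered pair $\{u,v\}$ with $uv\in E$ exactly one of the two symmetric arcs is left-going (since $D$ is symmetric and loop-free), and orienting each edge from its earlier to its later endpoint in the ordering yields an acyclic orientation of $G$ whose indegree at each $v$ equals $\delta^\ell(v)$ and hence satisfies the $(f',g')$-bounds.

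There is no real obstacle here; the only subtlety is ensuring the bookkeeping of the identity ``indegree in the orientation $=$ left-outdegree in the symmetric digraph'' (i.e.\ the correct mapping between $g$ and $g'$ via the degree sequence), which the paragraph preceding the corollary already spells out. Combining the reduction with the NP-completeness from \cite{kiraly2018acyclic} yields the corollary.
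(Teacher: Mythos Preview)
Your proposal is correct and follows exactly the approach the paper intends: the corollary is stated without proof (marked with a $\FBOX$) precisely because the preceding paragraph already spells out the equivalence $f\equiv f'$, $g\equiv d-g'$ between $(f',g')$-bounded acyclic orientations of $G$ and $(f,g)$-bounded orderings of the associated symmetric digraph, and then invokes the NP-completeness from~\cite{kiraly2018acyclic}. Two minor slips to clean up: in the $(f,g)$-bounded ordering problem only left-outdegrees are constrained (no right-indegrees), and the left-going arc has its \emph{tail}, not its head, at the later vertex---but neither affects the validity of your argument.
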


However, the $(f', g')$\nobreakdash-bounded acyclic orientation problem was shown to be solvable in polynomial time in certain special cases, such as when $f'(v) g'(v) = 0$ for all $v \in V$ (i.e., each vertex has either a lower bound or an upper bound on its indegree), or $f'(v) = d(v) - g'(v)$ for each $v \in V$ (i.e., each vertex has an exact specification for its indegree)~\cite{kiraly2018acyclic}.
They also examined the complexity of the $(f', g')$\nobreakdash-bounded acyclic orientation problem in the case when $f'(v) = k$ and $g'(v) = \ell$ for positive integers $k$ and $\ell$.
For $k = \ell = 1$, the goal is to find an ordering such that each vertex has at least one edge going to the left and at least one edge going to the right.
This problem is known as the \emph{$s$\nobreakdash-$t$ numbering problem}, which is solvable in polynomial time~\cite{lempel1967algorithm}.
The analogous problem for digraphs is also solvable in polynomial time~\cite{cheriyan1994directeds}, but the more general \emph{betweenness problem} is NP-hard~\cite{opatrny1979total}.
For $k = \ell = 2$, the $(f', g')$\nobreakdash-bounded acyclic orientation problem becomes NP-complete~\cite{kiraly2018acyclic}, but the complexity of the case where $k = 1$ and $\ell = 2$ remains open.

All of the results mentioned above directly apply to the $(f,g)$\nobreakdash-bounded ordering problem in the case of symmetric digraphs.
In this paper, we investigate the complexities of similar special cases of the $(f,g)$\nobreakdash-bounded ordering problem.

\paragraph{Arc-partitioning and vertex-ordering problems}
Arc-partitioning problems have been extensively studied in the literature~\cite{bang2022complexity, bang2015restricted, bang2020arc}.
For instance, partitioning a digraph into a directed cycle and an acyclic subgraph, or into a directed 2-factor and an acyclic subgraph, are known to be NP-complete problems with respect to Turing reduction~\cite{bang2022complexity}.
Similarly, determining whether a digraph contains an $r$\nobreakdash-in-arborescence and an $r$\nobreakdash-out-arborescence for a given root $r$ that are arc-disjoint is NP-complete~\cite{bang1991edge}.
In other words, the problem of partitioning a digraph into a subgraph containing an $r$\nobreakdash-in-arborescence and another containing an $r$\nobreakdash-out-arborescence is NP-complete.
However, the problem becomes solvable when considering two arc-disjoint $r$\nobreakdash-out-arborescences or $k$ arc-disjoint $r$\nobreakdash-out-arborescences in general~\cite{edmonds1973edge}.
In this work, we consider problems involving partitioning the arc set into a member of a specified digraph family (such as in-branchings, in-arborescences, matchings, or dipaths).
The analogous problems can be defined for undirected graphs as well, where the goal is to partition the edge set into a member of a specified undirected graph family and into a forest --- which is an undirected analogue of an acyclic digraph.
Many of the corresponding problems for undirected graphs were considered in~\cite{bernath2015tractability}.
For example, they proved that it is NP-hard to partition into a path and a forest, or into a cycle and a forest.
However, partitioning into two forests, or into a spanning tree and a forest are polynomial-time solvable problems~\cite{kameda1973, kishi1969maximally}.
In~\cite{montassier2012decomposing}, the authors provided a sufficient condition for partitioning into a matching and a forest.
In~\cite{yang2018decomposing}, this condition was generalized for partitioning into a matching and $k$ forests for a positive integer $k$.
Some of the directed partitioning problems can also be viewed as partitioning into an in- or outdegree bounded digraph and an acyclic subgraph.
In~\cite{wood2004bounded}, the authors considered a similar degree-bounded acyclic decomposition problem.
They proved that, for any integer $k\ge 2$, every simple digraph can be partitioned into $k$ acyclic subgraphs such that each outdegree is at most $\left\lceil \frac{\delta(v)}{k-1}\right\rceil$.

A well-known related problem is the \emph{feedback arc set problem} asking for the fewest number of arcs whose removal makes the digraph acyclic, which is known to be NP-hard~\cite{karp1972reducibility}.
This problem is equivalent to finding a vertex ordering that minimizes the number of left-going arcs.

Another relevant problem is the \emph{minimum target set selection problem}, which models the propagation of influence in a network~\cite{charikar2016approximating, chen2009approximability}.
In this problem, the network is represented by a (directed) graph, and each vertex $v$ has a threshold $\tau(v) \in \Z_+$.
An initial subset of vertices is activated, and in each round, a vertex $v$ is activated if at least $\tau(v)$ of its (in-)neighbors are already active.
Given an initial set of activated vertices, we can determine whether the entire network will be activated using the $(f,+\infty)$\nobreakdash-bounded ordering problem for the reversed digraph --- even in the natural arc-weighted variant by relying on the $(f,+\infty;\sum w)$\nobreakdash-bounded ordering problem.
The bounds are set such that $f(v) = 0$ for vertices in the initial set and $f(v) = \tau(v)$ for all other vertices.
This yields an ordering of the vertices that allows them to become activated one-by-one.

However, the problem of finding a minimum-size initial set that activates the entire network is hard to approximate within a ratio of $O(2^{\log^{1-\varepsilon}n})$ for any $\varepsilon > 0$, even when $\tau \equiv 2$~\cite{chen2009approximability}.
It is also hard to approximate within a ratio of $O(n^{\frac{1}{2}-\varepsilon})$ for any $\varepsilon > 0$, assuming the Planted Dense Subgraph Conjecture~\cite{charikar2016approximating}.

\paragraph{Rank aggregation problems}\label{sec:rankAggregation}
Consider a competition in which different judges provide complete rankings of candidates, and our goal is to determine a common ranking that represents a ``fair'' consensus of the preferences of the judges.
In the \emph{Kemeny rank aggregation problem}, the distance between two rankings is defined as the number of pairs of candidates whose order is reversed between the two rankings~\cite{kemeny1959mathematics}.
The goal is to find a common ranking that minimizes the total distance from the rankings of the judges.
Another variant of the problem aims to find a ranking that is closest to the farthest ranking, minimizing the maximum distance between the common ranking and the rankings of the judges.
Both of these problems are known to be NP-hard~\cite{biedl2009complexity}.

We now introduce a related problem where the distance is measured from the perspective of the candidates rather than the judges.
For a given candidate $v$, let $\varphi(v)$ denote the number of candidates that are ranked higher than $v$ in the common ranking, but lower than $v$ in the majority of the rankings of the judges.
This measure quantifies how ``unfair'' the common ranking appears from the perspective of candidate $v$.
The goal is to find a common ranking that minimizes the maximum $\varphi(v)$ across all candidates.

To reduce this problem to the $(f,g)$\nobreakdash-bounded ordering problem, we introduce a \emph{penalty digraph} in which each vertex corresponds to a candidate.
There is a directed arc from vertex $u$ to vertex $v$ if the majority of the judges rank $u$ before $v$.
If we order the vertices by an arbitrary ranking, then the left-outdegree of vertex $v$ --- which is the number of arcs from $v$ to vertices that precede it in the ordering~--- corresponds directly to the distance $\varphi(v)$ according to the candidate $v$.
Thus, the original problem can be reformulated as finding a vertex ordering that minimizes the maximum left-outdegree across all vertices.
This problem can be solved by determining the smallest positive integer $c$ for which the $(f,g)$\nobreakdash-bounded ordering problem has a feasible solution with $f \equiv -\infty$ and $g \equiv c$.

As a natural generalization of the previous problem, each candidate $v$ assigns a ``disappointment score'' $w_v(u)$ to every other candidate $u$, which measures how much $v$ is disappointed if $u$ precedes $v$ in the common order.
For example, $w_v(u)$ could represent the number of judges who rank $v$ before $u$.
The disappointment of $v$ in the common order is then the sum of $w_v(u)$ for all candidates $u$ that precede $v$.
Using the $(-\infty, g; \sum w)$\nobreakdash-bounded ordering problem, we can decide whether there exists an order in which the disappointment of each candidate $v$ is bounded by $g(v)$.
Moreover, we can minimize the maximum disappointment across all candidates in strongly polynomial time, as we will see in Section~\ref{sec:OnlyUpperBounds}.

\paragraph{Our contribution}
In Section~\ref{sec:OnlyUpperBounds}, we investigate Problem~\ref{prob:fgBounded}, namely, the $(f,g;\sum w)$\nobreakdash-bounded ordering problem in the case when either only lower or only upper bounds are given.
We show that the problem remains solvable under these circumstances, and provide necessary and sufficient conditions for the existence of such an order.
In contrast, the problem turns out to be NP-complete when subject to natural modifications, see Section~\ref{sec:Complexity}.
These modifications include relaxing the restriction that arc weights must be non-negative, or when a single vertex is subject to both lower and upper bounds.
Furthermore, we examine the $(f,g)$\nobreakdash-bounded ordering problem with special bound functions.
In particular, we show that the problem becomes NP-complete, for any $a \geq 1$ and $b \geq 2$, with bounds $f(v) = a$ and $g(v) = \delta(v) - b$ (except for the designated first and last vertices) --- in contrast to the solvability of the directed $s$\nobreakdash-$t$ numbering problem~\cite{cheriyan1994directeds}, which corresponds to the case $a=b=1$.
Additionally, we extend this hardness result to the case where $f \equiv g$, meaning that exact bounds are given for the left-outdegrees.
However, the analogous case of the $(f',g')$\nobreakdash-bounded acyclic orientation problem, where exact bounds are given for the indegrees, is known to be solvable in polynomial time~\cite{kiraly2018acyclic}.
Sections~\ref{sec:dDist} and~\ref{sec:lexicographical} investigate two modified versions of the polynomial-time solvable $(-\infty;g)$\nobreakdash-bounded ordering problem.
The first modification introduces a $d$\nobreakdash-distance constraint, where the upper bound $g(v)$ applies only to arcs going from $v$ to the at most $d$ directly preceding vertices.
The second modification explores lexicographical versions of the problem, in which we seek a $(-\infty, g)$\nobreakdash-bounded ordering with either a lexicographically minimal or maximal left-outdegree vector.
Both of these modified problems turn out to be NP-hard.

In Section~\ref{sec:simultaneousBounds}, we provide a comprehensive complexity analysis for Problem~\ref{prob:degreeBounded}, in particular, we consider every case of simultaneous lower, upper, or exact bounds for the left-outdegree and right-indegree of each vertex; see Table~\ref{tab:simultaneous} for a summary of our results.
\setlength{\tabcolsep}{10pt}
\renewcommand{\arraystretch}{1.5}

\begin{table}[h]
  \small
  \begin{center}
    \begin{tabular}{ |c|c|c|c|c|c|c| }
      \hline
      & $\delta^{\ell} \geq f_{\delta}$ & $\delta^{\ell} \leq g_{\delta}$ & $\delta^{\ell}=m_{\delta}$ & $\varrho^{r} \geq f_{\varrho}$ & $\varrho^{r} \leq g_{\varrho}$ & $\varrho^{r}=m_{\varrho}$\\
      \hline
      $\delta^{\ell} \geq f_{\delta}$ & \makecell{in P\\ Thm~\ref{thm:fChar}}& \makecell{NP-c\\ Cor~\ref{cor:LowerAndUpper}} & \makecell{NP-c\\ Cor~\ref{cor:strict}} & \makecell{NP-c\\ Cor~\ref{cor:outLowerInLower}} & \makecell{in P\\ Thm~\ref{thm:outLowerInUpper}} & \makecell{NP-c\\ Cor~\ref{cor:strict}}\\
      \hline
      $\delta^{\ell} \leq g_{\delta}$ &  & \makecell{in P\\ Thm~\ref{thm:gChar}} & \makecell{NP-c\\ Cor~\ref{cor:strict}} & \makecell{in P\\ Thm~\ref{thm:outUpperInLower}} & \makecell{NP-c\\ Thm~\ref{thm:outUpperInUpper}} & \makecell{NP-c\\ Cor~\ref{cor:strict}}\\
      \hline
      $\delta^{\ell}=m_{\delta}$ &  &  & \makecell{NP-c\\ Cor~\ref{cor:strict}} & \makecell{NP-c\\ Cor~\ref{cor:strict}} & \makecell{NP-c\\ Cor~\ref{cor:strict}} & \makecell{in P\\ Thm~\ref{thm:outStrictInStrict}}\\
      \hline
      $\varrho^{r} \geq f_{\varrho}$ &  &  &  & \makecell{in P\\ Thm~\ref{thm:fChar}} & \makecell{NP-c\\ Cor~\ref{cor:LowerAndUpper}} & \makecell{NP-c\\ Cor~\ref{cor:strict}}\\
      \hline
      $\varrho^{r} \leq g_{\varrho}$ &  &  &  &  & \makecell{in P\\ Thm~\ref{thm:gChar}} & \makecell{NP-c\\ Cor~\ref{cor:strict}}\\
      \hline
      $\varrho^{r}=m_{\varrho}$ &  &  &  &  &  & \makecell{NP-c\\ Cor~\ref{cor:strict}}\\
      \hline
    \end{tabular}\caption{The complexity analysis of all vertex-ordering problems with two simultaneous lower bound, upper bound, or prescription for the left-outdegree $\delta^{\ell}$ and right-indegree $\varrho^{r}$ of each vertex.
    }\label{tab:simultaneous}
  \end{center}
\end{table}

We emphasize that, in contrast to the problems where either only the left-outdegrees or only the right-indegrees are exactly prescribed, the strongly restrictive version of the problem when both the left-outdegrees and the right-indegrees are exactly prescribed turns out to be polynomial-time solvable.
This implies the solvability of the following two notable special cases.
First, when the left-going arcs must form an in-arborescence and the right-going arcs an out\nobreakdash-arborescence; second, when the left-going arcs must form an $s$-$t$ Hamiltonian dipath.

In Section~\ref{sec:ArcPartition}, we study Problems~\ref{prob:vertexOrdering} and~\ref{prob:arcPartition} for various natural acyclic families $\mathcal{F}$.
We show that, for in-branchings, unions of disjoint dipaths, and matchings, the vertex-ordering problem is essentially equivalent to the arc-partitioning problem.
However, this is not the case in general: for in-arborescences, perfect matchings, dipaths or Hamiltonian dipaths, the two problems do not coincide.
Table~\ref{tab:partitioningAndOrdering} summarizes the complexities of these problems.
\setlength{\tabcolsep}{5pt}
\renewcommand{\arraystretch}{2}
\begin{table}[h]
\small
  \begin{center}
    \begin{tabular}{|c|c|c|}
      \hline
      \textbf{Digraph family $\mathcal{F}$} & \textbf{Problem~\ref{prob:vertexOrdering}} & \textbf{Problem~\ref{prob:arcPartition}} \\
      \hline
      in-branchings & in P, Cor~\ref{cor:inBranchingOrder} & in P, Thm~\ref{thm:inBranchingPoly} \\
      \hline
      in-arborescences & NP-c, Cor~\ref{cor:inArbOrder} & open \\
      \hline
      matchings & NP-c, Cor~\ref{cor:matchingAcyclic} & NP-c, Thm~\ref{thm:matchingAndAcyclicNPC} \\
      \hline
      perfect matchings & NP-c, Thm~\ref{thm:perfectMatchinOrderNPC} & NP-c, Thm~\ref{thm:perfectMatchingAndAcyclicNPC} \\
      \hline
      unions of disjoint dipaths &
                        {\renewcommand{\arraystretch}{1}
                        \begin{tabular}{c}
                        NP-c, Cor~\ref{cor:disjPathOrder} \\
                        in P for constant number of dipaths, Thm~\ref{thm:kPathOrder}
                        \end{tabular}}
                                                                                            & NP-c, Thm~\ref{thm:disjPathAcyclic} \\
      \hline
      dipaths & in P, Cor~\ref{cor:pathOrder} & NP-c, Thm~\ref{thm:pathAcyclic} \\
      \hline
      Hamiltonian dipaths & in P, Thm~\ref{thm:hamPathOrder} & NP-c, Thm~\ref{thm:hamPathAcyclic} \\
      \hline
    \end{tabular}
    \caption{Complexity results for Problems~\ref{prob:vertexOrdering} and~\ref{prob:arcPartition} for various digraph families $\mathcal{F}$.}\label{tab:partitioningAndOrdering}
  \end{center}
\end{table}

Furthermore, we prove that partitioning into an in-branching and an acyclic subgraph can be solved in polynomial time --- even when some vertices are required to be roots of the in-branching --- along with a necessary and sufficient condition for the existence of such a partition.
This is equivalent to covering all directed cycles with an in-branching, which resembles a related problem where all directed cuts, rather than directed cycles, must be covered by an in-branching~\mbox{\cite[p.~567]{frank2011connections}}.
Although the arc-partitioning problem for in-arborescences remains open, we prove that partitioning into a minimum-cost in-arborescence and an acyclic subgraph is NP-complete.
Furthermore, partitioning into an in-arborescence and a \textit{spanning} acyclic subgraph is NP-complete as a corollary of the hardness proof for Problem~4.2.6 in~\cite{bang2022complexity}.
We also prove that partitioning into a minimum-size in-branching and an acyclic digraph is APX-hard.

For two disjoint subsets $S, T \subseteq V$ of equal size, we can decide in polynomial time whether there exists an ordering of the vertices such that the left-going arcs form $|S|$ disjoint $S$\nobreakdash-$T$ dipaths, and we also provide a necessary and sufficient condition for the existence.
The same problem turns out to be hard if the endpoints of the dipaths are free.

\paragraph{Notation}
Throughout this paper, $G = (V, E)$ denotes a loop-free undirected graph, where $V$ is the set of vertices and $E$ is the set of edges.
The degree of a vertex $v \in V$ in $G$ is denoted by $d(v)$ and the minimum degree in $G$ by $d_{\min}$.
Similarly, let $D = (V, A)$ be a loop-free directed graph (digraph), where $A$ is the set of arcs.
Parallel edges and arcs are allowed.
A weight function $w: A \to \R \cup \{\pm \infty\}$ may be assigned to the arcs in $D$.
The \emph{outdegree} of a vertex $v \in V$ in $D$ is denoted by $\delta(v)$, and its \emph{weighted outdegree} is denoted by $\delta_w(v)$.
The \emph{indegree} of $v$ is $\varrho(v)$, and the \emph{weighted indegree} is $\varrho_w(v)$.
For a subset $V' \subseteq V$, let $D[V']$ denote the subgraph of $D$ induced by $V'$.
The \emph{outdegree} and \emph{weighted outdegree} of $v \in V$ with respect to $V'$ are $\delta(v, V')$ and $\delta_w(v, V')$, respectively.
Similarly, the \emph{indegree} and \emph{weighted indegree} of $v$ with respect to $V'$ are $\varrho(v, V')$ and $\varrho_w(v, V')$.
An \emph{ordering} of the vertices is represented by $\sigma = (\sigma_1, \ldots, \sigma_n)$, where $\sigma_i \in V$ is the vertex that occupies the $i^{\text{th}}$ position in the order.
For a vertex $v = \sigma_i$ in the vertex order $\sigma$, the \emph{left-outdegree} and \emph{weighted left-outdegree} of $v$ in $\sigma$ are given by $\delta(v, \{\sigma_1, \ldots, \sigma_{i-1}\})$ and $\delta_w(v, \{\sigma_1, \ldots, \sigma_{i-1}\})$, respectively, and are denoted by $\delta^{\ell}(v)$ and $\delta^{\ell}_w(v)$.
The \emph{right-outdegree} of $v$ in $\sigma$ is $\delta(v, \{\sigma_{i+1}, \ldots, \sigma_n\})$, and we denote it by $\delta^{r}(v)$.
The \emph{left-indegree} and \emph{right-indegree} of $v$ in $\sigma$ are $\varrho(v, \{\sigma_1, \ldots, \sigma_{i-1}\})$ and $\varrho(v, \{\sigma_{i+1}, \ldots, \sigma_n\})$, respectively, and are denoted by $\varrho^{\ell}(v)$ and $\varrho^{r}(v)$.
Finally, the functions $f: V \to \R \cup \{-\infty\}$, $g: V \to \R \cup \{+\infty\}$, and $m: V \to \R$ represent lower, upper, and exact bounds, respectively, on the degrees or other graph parameters associated with the vertices in~$V$.

\section{Degree-bounded ordering problems}\label{sec:OrderinProblems}

First, we investigate Problem~\ref{prob:fgBounded} along with several natural special cases and modifications.
After that, we move on to Problem~\ref{prob:degreeBounded}, as a straightforward generalization of the $(f,g)$-bounded ordering problem.

\subsection{The $(f,g;\sum w)$-bounded ordering problem}\label{sec:fgBounded}
In Section~\ref{sec:Complexity}, the $(f,g;\sum w)$-bounded ordering problem will be shown to be NP-complete even for simple digraphs.
However, the next section proves that it can be solved in polynomial time provided that $f\equiv -\infty$ or~$g\equiv +\infty$.

\subsubsection{Either lower or upper bounds}\label{sec:OnlyUpperBounds}

By the $(-\infty, g; \sum w)$\nobreakdash-ordering problem, we mean the case where only upper bounds are given, that is, $f \equiv -\infty$.
This section gives a polynomial-time algorithm for solving this problem.
Later, this algorithm and the following theorems will be used to partition a digraph into an in-branching and an acyclic subgraph --- or prove that no such partition exists.
\begin{algorithm}
  \caption{\hspace{0.5cm}\textsc{$(-\infty, g; \sum w)$\nobreakdash-bounded ordering}}\label{alg:UpperBounds}
  \begin{algorithmic}[1]
    \State $V' \coloneqq V$, $n \coloneqq |V|$
    \State Let $\sigma_1, \ldots, \sigma_n$ denote the vertex order we are searching for.
    \For{$i = n, \ldots, 1$}
    \State $V^* \coloneqq \{ v \in V' : \delta_w(v, V'\setminus \{v\}) \leq g(v) \}$\label{alg:line:filter}
    \If{$V^* \neq \emptyset$}
    \State Choose $\sigma_i \in V^*$ arbitrarily.\label{alg:line:select}
    \State $V' \coloneqq V' \setminus \{ \sigma_i \}$
    \Else
    \State \textbf{return} \textit{No solution exists}
    \EndIf
    \EndFor
    \State \textbf{return} $\sigma_1, \ldots, \sigma_n$
  \end{algorithmic}
\end{algorithm}
\FloatBarrier

Algorithm~\ref{alg:UpperBounds} fixes the vertices from right to left.
The set of the non-fixed vertices is denoted by $V'$.
In Line~\ref{alg:line:filter}, the algorithm filters those vertices from $V'$ for which $\delta_w(v, V' \setminus \{v\}) \leq g(v)$.
If at least one such vertex exists, then one of them is selected, placed at the last free position, and deleted from $V'$.
If no such vertex is found, then the algorithm concludes that no solution exists.
Next, we show the correctness of Algorithm~\ref{alg:UpperBounds}.
\begin{theorem}\label{thm:upperBounds}
  Algorithm~\ref{alg:UpperBounds} solves the $(-\infty, g; \sum w)$\nobreakdash-bounded ordering problem.
\end{theorem}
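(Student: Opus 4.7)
The plan is to establish both soundness and completeness of Algorithm~\ref{alg:UpperBounds}. For soundness, I would maintain the invariant that at the start of iteration $i$ the current set $V'$ equals $\{\sigma_1, \dots, \sigma_i\}$; this holds initially with $V' = V$ and is preserved by the deletion step. Consequently, when $\sigma_i$ is chosen from $V^*$, the set $V' \setminus \{\sigma_i\}$ is exactly the collection $\{\sigma_1, \dots, \sigma_{i-1}\}$ of vertices that sit to the left of $\sigma_i$ in the returned ordering, so the selection criterion $\delta_w(\sigma_i, V' \setminus \{\sigma_i\}) \leq g(\sigma_i)$ translates directly into $\delta^{\ell}_w(\sigma_i) \leq g(\sigma_i)$, as required.

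For completeness, the key lemma I would prove is the following: for any valid $(-\infty, g; \sum w)$-bounded ordering $\tau$ of $V$ and any nonempty subset $V' \subseteq V$, the vertex $v \in V'$ appearing last among $V'$ in $\tau$ satisfies $\delta_w(v, V' \setminus \{v\}) \leq g(v)$. The argument uses non-negativity of $w$ crucially: since $V' \setminus \{v\}$ is contained in the set of vertices preceding $v$ in $\tau$, we have $\delta_w(v, V' \setminus \{v\}) \leq \delta^{\ell}_w(v) \leq g(v)$. Applying this lemma with $V'$ taken to be the algorithm's current set at an arbitrary iteration shows that $V^* \neq \emptyset$ whenever a valid ordering exists, so the algorithm cannot report failure in that case; the contrapositive is exactly the desired completeness.

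I do not anticipate a substantive obstacle: the whole argument is a ``last-position'' greedy exchange, and the only spot where care is needed is the inequality $\delta_w(v, V' \setminus \{v\}) \leq \delta^{\ell}_w(v)$, which silently invokes $w \geq 0$. It is precisely this non-negativity assumption that is relaxed in Section~\ref{sec:Complexity} to force NP-completeness, so it is natural to highlight its role when wrapping up the proof.
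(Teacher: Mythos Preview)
Your proposal is correct and follows essentially the same ``last-position'' argument as the paper: both prove completeness by taking the vertex of $V'$ that appears last in an arbitrary (resp.\ feasible) ordering and comparing $\delta_w(v,V'\setminus\{v\})$ with $\delta^{\ell}_w(v)$. Your write-up is slightly more explicit about the invariant $V'=\{\sigma_1,\dots,\sigma_i\}$ and about the role of $w\ge 0$, but the underlying proof is the same.
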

\begin{proof}
  Clearly, the fixed vertices do not violate the upper bounds as $\delta_w(v, V' \setminus \{v\}) \leq g(v)$ holds whenever a vertex $v$ is fixed.
  Thus, if a vertex satisfying this condition can be found in each iteration of the for loop, then the algorithm finds a feasible order for the $(-\infty, g; \sum w)$\nobreakdash-bounded ordering problem.
  Otherwise, no such vertex exists and $V'$ is non-empty.
  Let $\sigma$ be an arbitrary order of $V$, and let $u$ be the last vertex in $V'$ according to $\sigma$.
  Then $\delta^{\ell}_w(u) \geq \delta_w(u,V' \setminus \{u\})> g(u)$ holds, since $u$ is the last vertex from $V'$ and $\delta_w(v, V' \setminus \{v\})>g(v)$ for all $v\in V'$.
  Therefore, $\sigma$ is not feasible, and thus no feasible vertex order exists.
  Consequently, the algorithm correctly finds a feasible vertex order or concludes that no such order exists, and its time complexity is polynomial.
\end{proof}

Observe that this algorithm generalizes to the case where we seek a $(-\infty, g; \sum w)$\nobreakdash-bounded ordering that respects a given partial order $(V, \prec)$ on the vertices.
To enforce the partial order, simply add a new arc $uv$ with weight $w(uv) = +\infty$ for every pair of distinct vertices $u, v \in V$ such that $u \prec v$.

Furthermore, using binary search, one can minimize the maximum weighted left-outdegree $\delta^{\ell}_w(v)$ across all vertices by repeatedly solving the problem for uniform upper bounds $g$.
In fact, the algorithm can be made strongly polynomial.
\begin{remark}\label{remark:minMaxDegree}
  An order $\sigma$ that minimizes the maximum weighted left-outdegree $\delta^{\ell}_w(v)$ over all $v \in V$ can be found in strongly polynomial time by modifying Line~\ref{alg:line:select} of Algorithm~\ref{alg:UpperBounds} as follows: choose $\sigma_i = \arg \min \{\delta_w(v, V' \setminus \{v\}): v \in V^*\}$ instead of arbitrarily selecting $\sigma_i \in V^*$.
  The proof of the correctness is similar to that of Theorem~\ref{thm:upperBounds}.
  $\bullet$
\end{remark}

From the correctness of the algorithm, we obtain the following characterization for the existence of a feasible order.
\begin{theorem}\label{thm:gChar}
  For a digraph $D=(V,A)$ with a weight function $w$ on its arc set, the $(-\infty, g; \sum w)$-bounded ordering problem is polynomial-time solvable.
  There exists such an order if and only if $D$ has no induced subgraph $D' = (V', A')$ such that $\delta_w(v, V' \setminus \{v\}) > g(v)$ holds for each vertex $v \in V'$, where $\delta_w(v, V' \setminus \{v\})$ denotes the weighted outdegree of $v$ restricted to the arcs going out to the vertex set $V' \setminus \{v\}$.
  \FBOX
\end{theorem}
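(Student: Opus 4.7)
The plan is to recognize that this theorem is essentially a restatement of what the correctness proof of Algorithm~\ref{alg:UpperBounds} (Theorem~\ref{thm:upperBounds}) already yields; so the proof reduces to packaging that analysis as an ``if and only if'' characterization. Polynomial-time solvability is immediate from Theorem~\ref{thm:upperBounds}, since each of the $n$ iterations of Algorithm~\ref{alg:UpperBounds} needs only a linear scan of the current $V'$ to compute the weighted outdegrees $\delta_w(v, V'\setminus\{v\})$ and compare them to $g(v)$.

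For the direction ``existence of such a $D'$ implies no feasible order,'' I would reuse the last-vertex argument from the proof of Theorem~\ref{thm:upperBounds}. Fix any ordering $\sigma$ of $V$ and let $u$ be the vertex of $V'$ appearing last in $\sigma$. Every other vertex of $V'$ then lies to the left of $u$, so
\[
\delta^{\ell}_w(u) \;\geq\; \delta_w(u, V'\setminus\{u\}) \;>\; g(u),
\]
which shows that $\sigma$ violates its upper bound at $u$, hence no feasible ordering exists.

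For the converse, I would argue by the contrapositive: assuming no feasible ordering exists, I run Algorithm~\ref{alg:UpperBounds}. By Theorem~\ref{thm:upperBounds} the algorithm cannot terminate successfully (otherwise it would output a feasible ordering), so at some iteration it reaches $V^* = \emptyset$ for the current set $V'$. By the definition of $V^*$ on Line~\ref{alg:line:filter}, this means $\delta_w(v, V'\setminus\{v\}) > g(v)$ holds for every $v \in V'$, and therefore the induced subgraph $D[V']$ is the required witness.

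I do not expect any genuine obstacle. The only thing to be slightly careful about is making sure the witness $V'$ is extracted at the correct moment in the execution of the algorithm (namely, at the first iteration where the filter returns the empty set), and that the last-vertex argument is stated with the weighted outdegree rather than the unweighted one; both are handled exactly as in the proof of Theorem~\ref{thm:upperBounds}.
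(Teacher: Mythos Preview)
Your proposal is correct and matches the paper's approach exactly: the paper states Theorem~\ref{thm:gChar} as an immediate consequence of the correctness proof of Algorithm~\ref{alg:UpperBounds} (Theorem~\ref{thm:upperBounds}) and does not even spell out a separate proof. Your unpacking of the two directions --- the last-vertex argument for necessity and extracting $V'$ at the moment $V^*=\emptyset$ for sufficiency --- is precisely the content implicit in that reference.
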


Note that the $(f,+\infty; \sum w)$\nobreakdash-bounded and the $(-\infty, g;\sum w)$\nobreakdash-bounded ordering problems are essentially the same in the sense that one can compute an $(f,+\infty; \sum w)$\nobreakdash-bounded order by reversing a $(-\infty, g;\sum w)$\nobreakdash-bounded order for $g \equiv \delta_w - f$.
Therefore, the case of only lower bounds can be solved with a similar algorithm, which fixes the vertices from left to right.
We state the corresponding theorem in the case when only lower bounds are given.
\begin{theorem}\label{thm:fChar}
  For a digraph $D = (V, A)$ with a weight function $w$ on its arc set, the $(f, +\infty; \sum w)$-bounded ordering problem is solvable in polynomial time.
  There exists such an order if and only if there is no induced subgraph $D' = (V', A')$ such that $\delta_w(v, V \setminus V') < f(v)$ holds for each vertex $v \in V'$, where $\delta_w(v, V \setminus V')$ denotes the weighted outdegree of $v$ restricted to the arcs going out to the vertex set $V \setminus V'$.
  \FBOX
\end{theorem}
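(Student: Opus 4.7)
The plan is to derive Theorem~\ref{thm:fChar} from the already-established Theorem~\ref{thm:gChar} by exploiting the symmetry between left-going and right-going arcs under reversal of the vertex order. Specifically, for any ordering $\sigma$ of $V$, let $\sigma^R$ denote its reversal. Then for every $v\in V$ we have $\delta^{\ell,\sigma^R}_w(v) = \delta^{r,\sigma}_w(v) = \delta_w(v) - \delta^{\ell,\sigma}_w(v)$, since left and right swap under reversal and the two parts sum to the total weighted outdegree $\delta_w(v)$. Setting $g(v) := \delta_w(v) - f(v)$ (with the convention $\delta_w(v) - (-\infty) = +\infty$), a vertex order $\sigma$ is a feasible $(f,+\infty;\sum w)$-bounded order of $D$ precisely when $\sigma^R$ is a feasible $(-\infty,g;\sum w)$-bounded order of $D$. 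Polynomial-time solvability therefore follows immediately by running Algorithm~\ref{alg:UpperBounds} on $(D,g)$ and reversing its output.

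For the existence characterization, I would apply Theorem~\ref{thm:gChar} with the same choice of $g$ and translate the forbidden configuration. Theorem~\ref{thm:gChar} says that no $(-\infty,g;\sum w)$-bounded order exists if and only if there is an induced subgraph $D' = (V',A')$ with $\delta_w(v, V' \setminus \{v\}) > g(v)$ for every $v \in V'$. Since $D$ is loop-free and $v \in V'$, the identity $\delta_w(v, V' \setminus \{v\}) + \delta_w(v, V \setminus V') = \delta_w(v)$ holds; substituting $g(v) = \delta_w(v) - f(v)$, the inequality becomes $\delta_w(v, V \setminus V') < f(v)$. This is exactly the forbidden configuration claimed for the $(f,+\infty;\sum w)$-case, giving both directions at once.

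The one point that requires care is that the problem definition nominally requires $g \ge 0$, whereas our choice $g = \delta_w - f$ can go negative when $f(v) > \delta_w(v)$. This is benign: such a vertex $v$ forces $\delta^{\ell}_w(v) \le \delta_w(v) < f(v)$ in every ordering, so the instance is infeasible, and the singleton $V' = \{v\}$ already witnesses the forbidden configuration, since $\delta_w(v, V \setminus V') = 0 \le \delta_w(v) < f(v)$. Equivalently, inspecting the correctness argument of Theorem~\ref{thm:gChar} shows that neither the algorithm nor its analysis uses the sign of $g$, so the reduction is valid for any real-valued $g$. There is no conceptual obstacle beyond this bookkeeping; the essence of the proof is the reversal bijection and the elementary complementation identity for $\delta_w$.
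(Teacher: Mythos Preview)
Your approach is exactly the paper's: reduce to Theorem~\ref{thm:gChar} via the order-reversal bijection with $g \equiv \delta_w - f$, and then translate the forbidden configuration using $\delta_w(v, V'\setminus\{v\}) + \delta_w(v, V\setminus V') = \delta_w(v)$. One small slip in your edge-case paragraph: for the singleton $V' = \{v\}$ you have $V\setminus V' = V\setminus\{v\}$, so $\delta_w(v, V\setminus V') = \delta_w(v)$, not $0$; the certificate still works because $\delta_w(v) < f(v)$ is precisely the hypothesis, but the stated equality is wrong.
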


Applying this theorem for unweighted digraphs and the lower bound $f(r)=0$, $f(v)=k$ for each $v\in V\setminus\{r\}$, where $r\in V$ is a fixed root vertex, we obtain the following corollary.
\begin{corollary}
  It can be decided in polynomial time whether a digraph contains $k$ arc-disjoint $r$\nobreakdash-in-ar\-bores\-cences whose union is acyclic.
  \FBOX
\end{corollary}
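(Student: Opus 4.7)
The plan is to deduce the corollary from Theorem~\ref{thm:fChar} by proving the following equivalence: a digraph $D=(V,A)$ contains $k$ arc-disjoint $r$-in-arborescences whose union is acyclic if and only if $D$ admits an unweighted $(f,+\infty)$-bounded ordering for $f(r)=0$ and $f(v)=k$ on $V\setminus\{r\}$. Once this equivalence is in place, the polynomial-time decision procedure is precisely the one supplied by Theorem~\ref{thm:fChar}.

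For the easier direction, I would start from $k$ arc-disjoint $r$-in-arborescences $T_1,\dots,T_k$ whose union $H$ is acyclic. Each $T_i$ contributes out-degree $1$ at every non-root vertex and $0$ at $r$, so in $H$ every $v\ne r$ has out-degree exactly $k$ while $r$ is the unique sink. Any topological ordering of $H$ places $r$ last and gives an ordering of $V$ in which all arcs of $H$ are left-going, so $\delta^{\ell}(v)\ge k$ for every $v\ne r$, as required by the bounds.

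For the converse, assume an ordering $\sigma$ meeting the bounds is given. Let $H'$ be the digraph obtained from the set of left-going arcs of $\sigma$ by discarding all arcs that leave $r$. Then $H'$ is acyclic, $r$ is its unique sink, and every $v\ne r$ still has out-degree at least $k$ in $H'$. I would then apply Edmonds' arc-disjoint branching theorem (in its in-arborescence form) to conclude that $H'$ contains $k$ arc-disjoint $r$-in-arborescences; since their union lies in the acyclic $H'$, it is itself acyclic, and each arborescence is a subgraph of $D$ as required.

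The only genuine obstacle is verifying Edmonds' cut condition in $H'$, namely that for every $v\ne r$ and every $S\subseteq V$ with $v\in S$ and $r\notin S$, at least $k$ arcs of $H'$ leave $S$. I would handle this with a \emph{sink argument}: the acyclic digraph $H'[S]$ must contain a sink $u$, and since $r\notin S$ we have $u\ne r$, so $u$ has out-degree at least $k$ in $H'$. Because $u$ is a sink of $H'[S]$, all of its $H'$-out-arcs leave $S$, yielding the required lower bound on the cut. This is the core step that bridges the degree condition delivered by Theorem~\ref{thm:fChar} and the connectivity condition needed by Edmonds' theorem.
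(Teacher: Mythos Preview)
Your approach is exactly the one the paper intends: reduce to Theorem~\ref{thm:fChar} with $f(r)=0$ and $f(v)=k$ for $v\ne r$. The paper states the corollary immediately after that sentence and does not spell out the equivalence at all, so your argument is simply a fleshed-out version of the same route. The sink argument for checking Edmonds' cut condition in $H'$ is correct and is the natural way to complete the converse direction.

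One small slip to fix: in the forward direction, a topological ordering of $H$ with the sink $r$ placed \emph{last} makes the arcs of $H$ \emph{right}-going, not left-going. You need the reverse topological order (equivalently, place $r$ first); then every arc of $H$ points to an earlier vertex and $\delta^{\ell}(v)\ge k$ follows for $v\ne r$ as you claim.
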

Note that the problem of finding arc-disjoint in-arborescences whose union is acyclic is only meaningful if they share the same root vertex.

\begin{remark}
  It is not difficult to show that the $(f,g; \sum w)$\nobreakdash-bounded ordering problem remains solvable even when both lower and upper bounds are given as long as, for each vertex, either a lower or an upper bound is specified.
  $\bullet$
\end{remark}

\subsubsection{Hardness results}\label{sec:Complexity}

In this section, we investigate the complexity of the $(f,g;\sum w)$\nobreakdash-bounded ordering problem.
In the previous section, we showed that the problem can be solved efficiently when only upper bounds are present.
This naturally raises the question whether a similar algorithm exists for more general cases or related problems.

First, we consider the case where we have upper bounds for all vertices except one for which both lower and upper bounds are given.
The NP-completeness of this variant was established for undirected graphs (which are essentially equivalent to symmetric digraphs) in~\cite{kiraly2018acyclic}.
However, the complexity of this problem remained an open question for simple graphs.
Now we prove that the problem remains NP-complete for simple digraphs.
\begin{theorem}\label{thm:oneStrictBound}
  The $(f,g)$\nobreakdash-bounded ordering problem for simple digraphs is NP-complete if only upper bounds are given for all vertices except for a single vertex $v$ for which $f(v)=g(v)$.
\end{theorem}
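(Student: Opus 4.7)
Membership in NP is immediate: given a candidate ordering $\sigma$, one computes $\delta^{\ell}(u)$ for every $u \in V$ in polynomial time and checks it against the relevant upper or exact bound.

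For NP-hardness, my plan is a polynomial-time reduction from the analogous variant on undirected graphs, which was shown to be NP-complete in \cite{kiraly2018acyclic} and is highlighted in the paragraph preceding the theorem. The apparent subtlety is that the cited construction seems to produce an undirected multigraph (equivalently, a symmetric digraph with parallel antiparallel arc pairs), so the new content is to eliminate parallel arcs while preserving both the decision and the single-exact-bound structure at a single vertex~$v^*$.

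Starting from an undirected instance $(G, g, v^*, k^*)$ with $f(v^*) = g(v^*) = k^*$ and only upper bounds on the remaining vertices, I would build a simple digraph $D'$ by replacing each edge $\{u, w\}$ of $G$ with a small oriented gadget built around an auxiliary vertex $z_{uw}$. The gadget would use arcs $u \to z_{uw}$ and $w \to z_{uw}$ together with a zero (or otherwise forcing) upper bound on $z_{uw}$ to pin the auxiliary's position so that the gadget contributes exactly $1$ to $\delta^{\ell}$ of the later endpoint and $0$ to the earlier one, faithfully emulating the semantics of an undirected edge. Parallel edges in $G$ are handled by introducing distinct auxiliaries $z_{uw}^{(1)}, z_{uw}^{(2)}, \ldots$ so that $D'$ has no parallel arcs. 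Upper bounds on the original vertices, and the exact bound on $v^*$, would be adjusted to absorb any fixed offset produced by the gadgets; every auxiliary vertex receives only an upper bound, so $v^*$ remains the unique vertex with an exact bound.

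The main obstacle will be designing the forcing mechanism inside the gadget using only upper-bound constraints. I need to rule out the ``undesired'' placements of $z_{uw}$ (before both endpoints, or after both) while keeping the intermediate placement feasible, which is nontrivial because upper bounds alone cannot directly force positional intervals. One promising tactic is to route additional arcs from each $z_{uw}$ to $v^*$ (or to another global anchor), so that the exact bound on $v^*$ indirectly penalizes bad auxiliary placements by counting them; another is to use a short chain of auxiliaries that collectively force the desired interval. The correctness argument then splits into two directions: extending any valid ordering of $G$ to $D'$ by inserting each $z_{uw}$ between its endpoints (together with the anchors), and projecting any valid ordering of $D'$ back to a valid ordering of $G$ after discarding the auxiliaries, with all bounds preserved by the compensating adjustments.
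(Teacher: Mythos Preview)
Your proposal leaves the central step genuinely unresolved. The gadget you describe, arcs $u\to z_{uw}$ and $w\to z_{uw}$ with an upper bound on $z_{uw}$, imposes no constraint whatsoever: $z_{uw}$ has no outgoing arcs, so $\delta^{\ell}(z_{uw})=0$ in every placement, and the auxiliary is free to sit before both endpoints, between them, or after both. You correctly identify this as ``the main obstacle'', but neither of the two fixes you float is shown to work. Routing an arc from every $z_{uw}$ to the single exact-bound vertex $v^*$ ties all $|E|$ gadgets to one global counter; you would then have to argue that feasible orderings must put exactly the right number of auxiliaries on each side of $v^*$, and it is not clear how to do this without destroying the equivalence with the undirected instance. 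The ``chain of auxiliaries'' is not specified. The underlying difficulty is structural: an upper bound on left-outdegree is a one-sided constraint (moving a vertex leftward can only help), so there is no local way to force an auxiliary to sit \emph{after} one of its endpoints using upper bounds on the auxiliary alone.

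The paper sidesteps this by reducing directly from \textsc{Independent Set} rather than from the undirected multigraph version. A special vertex $s$ with exact bound $k$ has arcs to all vertex-copies of $G$; each edge-copy $e=uv$ carries upper bound $1$ and arcs $e\to u$, $e\to v$; a chain of parallel arcs $s\to e_1$, $e_i\to e_{i+1}$ forces $s$ to precede every edge-copy. Then exactly $k$ vertex-copies sit before $s$, and the bound $g(e)=1$ on every edge-copy forces these $k$ vertices to be independent. The parallel arcs are only in the forcing chain, and they are eliminated by subdividing each one with a fresh vertex of upper bound $0$: such a vertex must precede its unique out-neighbour, which reproduces the chain order. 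This is the missing idea you should take away: do not try to build gadgets whose auxiliaries must land \emph{between} two endpoints (which upper bounds cannot enforce); instead, use auxiliaries that only need to land \emph{before} a single out-neighbour, which $g=0$ enforces trivially.
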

\begin{proof}
  The problem is clearly in NP.
  To prove that it is NP-complete, we reduce the independent set problem~\cite{karp1972reducibility} to it.
  Given an instance of the independent set problem, where we are given a graph $G=(V, E)$ and a parameter $k$, we construct a digraph $D=(V_D,A)$ and appropriate bounds such that the $(f,g)$\nobreakdash-bounded ordering problem on $D$ is solvable if and only if $G$ has an independent set of size $k$.
  The construction of the digraph $D$, illustrated by Figure~\ref{fig:oneStrictBound}, is as follows.
  Let the vertex set of $D$ consist of the vertices and edges of $G$, and an additional vertex~$s$.
  For each edge $e=uv\in E$, let $D$ contain an arc from $e\in V_D$ to $u\in V_D$ and an arc from $e\in V_D$ to $v\in V_D$.
  Moreover, for each vertex $v\in V$, let $D$ contain an arc from $s$ to $v\in V_D$.
  Let $D$ contain $n$ parallel arcs from $s$ to $e_1$ and two parallel arcs from every other vertex $e_i$ to the succeeding vertex $e_{i+1}$, where $e_1,\ldots,e_m$ are the vertices corresponding to the edges of $G$ in a fixed order, see the bottom row of Figure~\ref{fig:oneStrictBound}.
\FloatBarrier
  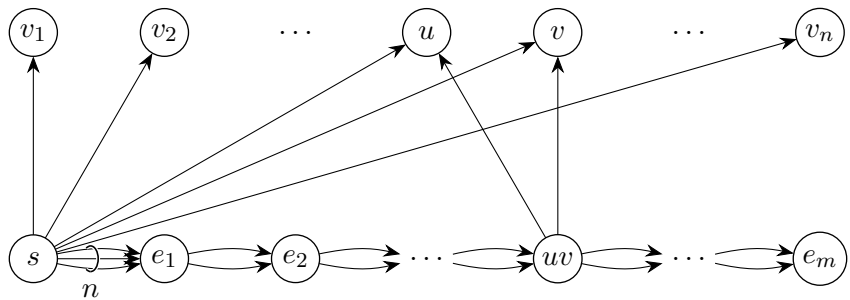
\begin{figure}[t]
    \centering
    \begin{tikzpicture}[yscale=1.5, xscale=1.15]
      \SetVertexMath
      \Vertex[x=-1.5, y=2,L=v_1]{v1}
      \Vertex[x=0, y=2,L=v_2]{v2}
      \draw (1.5,2) node () {$\ldots$};
      \Vertex[x=3, y=2, L=u]{u}
      \Vertex[x=4.5, y=2]{v}
      \draw (6,2) node () {$\ldots$};
      \Vertex[x=7.5, y=2, L=v_n]{vn}

      \Vertex[x=-1.5,y=0, L=s]{s}
      \Vertex[x=0,y=0, L=e_1]{e1}
      \Vertex[x=1.5,y=0, L=e_2]{e2}
      \draw (3,0) node () {$\ldots$};
      \Vertex[x=4.5,y=0]{uv}
      \draw (6,0) node () {$\ldots$};
      \Vertex[x=7.5,y=0, L=e_m]{em}

      \draw[\niceArrow] (s)--(v1);
      \draw[\niceArrow] (s)--(v2);
      \draw[\niceArrow] (s)--(u);
      \draw[\niceArrow] (s)--(v);
      \draw[\niceArrow] (s)--(vn);
      \draw[\niceArrow] (uv)--(u);
      \draw[\niceArrow] (uv)--(v);

      \draw[\niceArrow] (s)--(e1);
      \draw[\niceArrow] (s) to [out=10,in=170,looseness=1] (e1);
      \draw[\niceArrow] (s) to [out=-10,in=-170,looseness=1] (e1);
      \path[circleAroundEdges={.1}{.17}{.4}] (s)--(e1) node [below=.2 cm, pos=2/5] {$n$};
      \draw[\niceArrow] (e1) to [out=10,in=170,looseness=1] (e2);
      \draw[\niceArrow] (e1) to [out=-10,in=-170,looseness=1] (e2);
      \draw[\niceArrow] (e2) to [out=10,in=170,looseness=1] (2.7,0.04);
      \draw[\niceArrow] (e2) to [out=-10,in=-170,looseness=1] (2.7,-0.04);
      \draw[\niceArrow] (3.3,0.04) to [out=10,in=170,looseness=1] (uv);
      \draw[\niceArrow] (3.3,-0.04) to [out=-10,in=-170,looseness=1] (uv);
      \draw[\niceArrow] (uv) to [out=10,in=170,looseness=1] (5.7,0.04);
      \draw[\niceArrow] (uv) to [out=-10,in=-170,looseness=1] (5.7,-0.04);

      \draw[\niceArrow] (6.3,0.04) to [out=10,in=170,looseness=1] (em);
      \draw[\niceArrow] (6.3,-0.04) to [out=-10,in=-170,looseness=1] (em);

    \end{tikzpicture}
    \caption{The digraph $D$ constructed during the reduction from the independent set problem.}\label{fig:oneStrictBound}
  \end{figure}

  Let $f(s)=g(s)=k$ and $f(v)=-\infty$, $g(v)=1$ for each vertex $v\in V_D \setminus \{s\}$.
  We prove that the $(f,g)$\nobreakdash-bounded ordering problem is solvable for $D$ if and only if $G$ has an independent set of size $k$.

  Suppose that $G$ has an independent set of size $k$, and consider the following ordering of the vertices of $D$.
  First, list the vertices of the independent set in arbitrary order, then the vertices in the bottom row of Figure~\ref{fig:oneStrictBound} in the order $s,e_1,\ldots,e_m$, and put the remaining vertices in arbitrary order to the end.
  Then $\delta^{\ell}(s)=k$, $\delta^{\ell}(v)=0$ for each vertex $v\in V_D\cap V$, and each vertex $e\in V_D\cap E$ can only have left out-neighbors from the vertices of the independent set, therefore, $\delta^{\ell}(e) \leq 1$.
  So the resulting order is a feasible solution to the $(f,g)$\nobreakdash-bounded ordering problem defined on $D$.

  Conversely, suppose that there exists a feasible order $\sigma$ for the $(f,g)$\nobreakdash-bounded ordering problem.
  Because of the parallel arcs, the vertices $s,e_1,\dots,e_m$ in the bottom row of Figure~\ref{fig:oneStrictBound} must be in the given order.
  This implies that $s$ precedes all vertices $e\in V_D\cap E$ corresponding to the edges of $G$.
  The vertex $s$ has bounds $f(s)=g(s)=k$, therefore, there must be exactly $k$ vertices from $V_D\cap V$ before $s$ in $\sigma$.
  These vertices of $D$ correspond to vertices of $G$, and they must be independent in $G$, because the upper bound $g(e)=1$ for any edge $e\in E\cap V_D$ ensures that no two adjacent vertices may precede $e$.
  This implies that the first $k$ vertices in $\sigma$ form an independent set in $G$.

  The parallel arcs in the bottom row of Figure~\ref{fig:oneStrictBound} are only used to ensure the order of the vertices $s,e_1,\ldots,e_m$.
  We can also provide this by splitting each parallel arc $e$ with a new vertex $p_e$ with upper bound $g(p_e)=0$.
  This implies that the $(f,g)$\nobreakdash-bounded ordering problem remains NP-complete for simple digraphs.
  \hfill\end{proof}

In another natural modification, we no longer require the arc weights to be non-negative.
We show that this version of the problem is also NP-complete, using a similar reduction as in the previous proof.
\begin{theorem}
  The $(-\infty, g; \sum w)$\nobreakdash-bounded ordering problem for simple digraphs is NP-complete when negative arc weights are allowed.
\end{theorem}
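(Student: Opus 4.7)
The problem clearly lies in NP, so the real content is NP-hardness. I would reduce from the independent set problem, following the blueprint of Theorem~\ref{thm:oneStrictBound}: the structural change is that the strict bound $f(s)=g(s)=k$ on the single counter vertex $s$ must be emulated using only upper bounds, and the parallel-arc trick used to put $s$ before all edge-vertices must be replaced to keep the digraph simple. The main obstacle is designing a gadget that simulates $\delta^{\ell}_w=k$ at a vertex using upper bounds only; everything else then adapts routinely by reusing Theorem~\ref{thm:oneStrictBound}'s edge gadget.

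The proposed equality gadget splits $s$ into two vertices $s_1, s_2$ with arcs $s_1v$ of weight $-1$ and $s_2v$ of weight $+1$ for every $v\in V$, a single arc $s_1s_2$ of sufficiently large weight $M$ (say $M>n$), and bounds $g(s_1)=-k$, $g(s_2)=k$. If $s_2\prec s_1$, the arc $s_1s_2$ contributes $M$ to $\delta^{\ell}_w(s_1)$, driving it above $-k$ because $M-|\{v\prec s_1\}|\ge M-n>-k$; hence $s_1\prec s_2$ is forced. The bound at $s_2$ then reads $|\{v\in V:v\prec s_2\}|\le k$ and the bound at $s_1$ reads $|\{v\in V:v\prec s_1\}|\ge k$, so because $s_1\prec s_2$ sandwiches the two counts, both equal $k$ and no $V$-vertex lies strictly between $s_1$ and $s_2$ --- precisely the arithmetic content of the discarded equality $f(s)=g(s)=k$.

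For the independent-set part I would reuse the edge gadget: for every $e=uv\in E$, introduce a vertex $e$ with arcs $eu, ev$ of weight $+1$ and $g(e)=1$. In place of the parallel-arc chain that forced $s\prec e_i$ in Theorem~\ref{thm:oneStrictBound}, I would add, for each $e\in E$, an arc $s_2e$ of weight $M'>k+1$; the bound $g(s_2)=k$ then precludes $e\prec s_2$, so every edge-vertex must follow $s_2$. All remaining vertices receive $g\equiv +\infty$. Since every constructed arc uses a distinct ordered pair of endpoints, the resulting digraph is simple, as required.

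Equivalence is then a short two-way check. From an independent set $I$ of size $k$, the ordering $I, s_1, s_2$, edge-vertices (in any order), $V\setminus I$ (in any order) satisfies every bound, because $|I\cap\{u,v\}|\le 1$ for every $uv\in E$ handles the edge constraints and the gadget analysis above handles the rest. Conversely, any feasible ordering places exactly $k$ vertices of $V$ before $s_2$ by the gadget, and since $s_2$ precedes every edge-vertex $e=uv$ while $g(e)=1$ allows at most one of $u,v$ among those $k$, they form an independent set of size $k$ in $G$.
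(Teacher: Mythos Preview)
Your reduction is correct and follows the same overall strategy as the paper --- a reduction from \textsc{Independent Set} with an edge-vertex $e$ for each $uv\in E$ carrying $g(e)=1$, and negative-weight arcs out of a counter vertex to convert a lower-bound requirement into an upper bound. The details differ in two respects.

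First, the paper does not introduce a second counter vertex. It simply reuses the digraph from Theorem~\ref{thm:oneStrictBound}, sets $w(sv)=-1$ for $v\in V$ and $w\equiv 1$ elsewhere, and takes $g(s)=-k$. This only forces \emph{at least} $k$ vertices of $V$ to precede $s$, not exactly $k$; but that is all the reduction needs, since an independent set of size $\ge k$ trivially contains one of size $k$. Your $s_1$/$s_2$ gadget pinning the count to exactly $k$ is sound but unnecessary. Second, to place the edge-vertices after the counter, the paper keeps the chain of parallel arcs from Theorem~\ref{thm:oneStrictBound} and subdivides them for simplicity, whereas you attach a single heavy arc $s_2e$ per edge-vertex; your device yields a simple digraph directly without the subdivision step, which is a small presentational advantage.
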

\begin{proof}
  Given an instance of the independent set problem with a graph $G=(V,E)$ and a parameter $k$, we construct the digraph $D=(V_D,A)$ as described in the proof of Theorem~\ref{thm:oneStrictBound}, see Figure~\ref{fig:oneStrictBound}, and define the arc weights in such a way that the $(-\infty, g; \sum w)$\nobreakdash-bounded ordering problem for $D$ is solvable if and only if $G$ has an independent set of size $k$.
  The arc weights in $D$ are defined as $w(sv)=-1$ for each $v\in V$, and $w(e)=1$ for all other arcs.
  Let the upper bounds be $g(s)=-k$ and $g(v)=1$ for each vertex $v\in V_D \setminus \{s\}$.
  Similarly to the proof of Theorem~\ref{thm:oneStrictBound}, we can argue that $G$ has an independent set of size $k$ if and only if the $(-\infty, g; \sum w)$\nobreakdash-bounded ordering problem on $D$ is solvable.
  Moreover, this statement holds even if each parallel arc in the bottom row of Figure~\ref{fig:oneStrictBound} is divided by a new vertex with upper bound $0$.
\end{proof}

Another line of questions concerns the complexity of the $(f,g)$\nobreakdash-bounded ordering problem with special bound functions.
One such case is when the lower and upper bounds are equal for each vertex, in other words, there is an exact prescription for the left-outdegrees of the vertices.
In this section, we prove this problem to be NP-complete, however, it is known to be solvable in polynomial time for symmetric digraphs~\cite{kiraly2018acyclic}.
The other extreme case is when there is a significant difference between the lower and upper bounds on each vertex.
Suppose we are given a first vertex $s$ and a last vertex $t$ with bounds $f(s) = g(s) = 0$ and $f(t) = g(t) = \delta(t)$, respectively.
For every other vertex $v \notin \{s,t\}$, let the bounds be $f(v) = a$ and $g(v) = \delta(v) - b$, where $a$ and $b$ are given non-negative integers.
This problem is equivalent to ordering the vertices of a digraph such that each vertex has at least $a$ outgoing arcs to preceding vertices and at least $b$ outgoing arcs to succeeding vertices, except for $s$ and $t$.
If the parameters are $a=b=1$, then the problem is the so-called $s$\nobreakdash-$t$ numbering problem for digraphs, which is known to be polynomial-time solvable~\cite{cheriyan1994directeds}.
If $a=b=2$, then the problem becomes NP-complete even for symmetric digraphs~\cite{kiraly2018acyclic}.
This immediately implies that the problem is also NP-complete for any parameters where $a \geq 2$ and $b \geq 2$.
In what follows, we prove that the problem is also hard in the only remaining case $a=1$ and $b=2$.
\begin{theorem}\label{thm:Left1Right2}
  The $(f,g)$\nobreakdash-bounded ordering problem is NP-complete with bounds $f(v) = 1$, $g(v) = \delta(v) - 2$ for each vertex $v$, except for the first and last vertices.
  The problem remains NP-complete even when all outdegrees are at most $3$.
\end{theorem}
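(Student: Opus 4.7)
Membership in NP is clear. For NP-hardness I would reduce from 3-SAT. The key structural observation is that, with $f(v)=1$ and $g(v)=\delta(v)-2$ for every internal vertex, any vertex of outdegree at most $3$ must in fact have outdegree exactly $3$ in a feasible ordering, contributing precisely one outgoing arc to a preceding vertex and exactly two to succeeding vertices. This rigid ``one-left, two-right'' pattern is what allows local gadgets to transmit and enforce ordering information under such a tight budget.

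Given a 3-SAT instance, I would build the digraph from three components. First, a \emph{spine} of low-outdegree vertices linking $s$ to $t$ whose relative order is forced, in the spirit of how the parallel arcs in Theorem~\ref{thm:oneStrictBound} pin down the order of $s,e_1,\ldots,e_m$. Since parallel arcs and high outdegree are not allowed here, every ``would-be'' parallel arc is expanded into a short chain of auxiliary \emph{splitter} vertices, in the same vein as the $p_e$ trick used at the end of the proof of Theorem~\ref{thm:oneStrictBound}, each of outdegree at most $3$, whose one-left requirement forces the chain to be traversed in the intended direction. Second, for each variable $x_i$ a \emph{variable gadget} consisting of two twin vertices $x_i^+, x_i^-$ attached to a designated spine vertex in such a way that any feasible order must place exactly one of them before the spine vertex and the other after, encoding the truth assignment. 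Third, for each clause $C_j$ a \emph{clause gadget} whose one-left constraint on a critical vertex can be met only if at least one of its three literal vertices lies on the mandated side of a distinguished spine vertex.

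Once the gadgets are defined, correctness would be argued in the standard two-sided way. From a satisfying assignment, for each variable I place its true-literal vertex before, and its false-literal vertex after, the corresponding spine vertex, and extend to an order of the whole digraph by following the forced spine order and the internal constraints of each gadget. Conversely, from any feasible ordering I read off a truth assignment from the variable gadgets, and the clause gadgets immediately certify that every clause is satisfied. Verifying that all outdegrees are indeed at most $3$ is then a routine inspection of the gadget descriptions.

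I expect the principal obstacle to be the clause gadget. With outdegree capped at $3$, the natural ``fan-in/fan-out'' implementations of an OR are unavailable, and the room to enforce ``at least one of three literals'' is tight. My intended fix is to serialize the three literal checks along a short chain of auxiliary vertices, each consuming exactly one left-arc, so that the chain admits a legal ordering precisely when at least one of the three literal vertices lies on the correct side of a critical spine vertex. Making this serialization watertight, so that no inadvertent ``one-left, two-right'' completion can bypass the intended clause semantics, is the delicate step on which the whole reduction hinges.
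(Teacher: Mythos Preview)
Your overall architecture---a forced spine, variable gadgets that place one literal-vertex before and one after a designated spine position, and clause gadgets attached to the spine---matches the paper's. The decisive divergence is the choice of source problem. You reduce from ordinary 3-SAT and therefore need an ``at least one of three'' gadget, which you correctly flag as the crux and leave unresolved. The paper instead reduces from \textsc{3-XSAT-3} (one-in-three SAT with three occurrences per variable), and this is exactly what the ``one-left, two-right'' arithmetic hands you for free: a clause vertex $c_j$ with outdegree~$3$ and arcs to its three literal vertices must, by $f(c_j)=1$ and $g(c_j)=\delta(c_j)-2=1$, have \emph{exactly one} of those literals preceding it. No OR gadget, no serialization---the clause semantics drop out of the degree constraints directly. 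Your proposed serialization of an OR under the outdegree-$3$ cap may be salvageable, but it is genuinely delicate (every auxiliary vertex again has exactly one left arc, so it is hard to make ``at least one'' stable without accidentally enforcing ``exactly one''), and it is simply unnecessary once you switch the source problem.

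One further structural point worth absorbing from the paper: it places the clause vertices strictly \emph{between} two blocks of variable vertices $v_1,\ldots,v_n$ and $v'_1,\ldots,v'_n$ on the spine. Each $v_i$ and $v'_i$ has arcs to both $x_i$ and $\overline{x}_i$ (plus one spine arc), so the one-left-two-right rule forces one of $x_i,\overline{x}_i$ before $v_i$ and the other after $v'_i$, hence before and after \emph{all} clause vertices simultaneously. This global sandwiching is what makes a single clause vertex see the same truth assignment as every other clause vertex; your sketch attaches each variable to ``a designated spine vertex'', which is the right idea but would need the same sandwich structure to make the assignment consistent across clauses.
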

\begin{proof}
  The proof is by reduction from the NP-complete 3\nobreakdash-XSAT\nobreakdash-3 problem~\cite{porschen2014xsat}.
  In the 3\nobreakdash-XSAT\nobreakdash-3 problem, we are given a conjunctive normal form (CNF) formula in which each clause contains exactly three literals and each variable appears in exactly three clauses.
  The goal is to decide whether the formula can be satisfied such that exactly one literal is true in each clause.
  Let $x_1, \ldots, x_n$ denote the variables and $c_1, \ldots, c_n$ the clauses.
  \begin{figure}[t]
    \centering
    \begin{tikzpicture}[xscale=0.68]
      \Vertex[x=11.25, y=0]{t}

      \SetVertexNoLabel
      \grEmptyPath[form=1,x=-3.75,y=4.5,RA=6, rotation=0, prefix=x]{3}
      \AssignVertexLabel{x}{$x_1$, $x_2$, $x_3$}
      \grEmptyPath[form=1,x=-0.75,y=4.5,RA=6, rotation=0, prefix=nex]{3}
      \AssignVertexLabel{nex}{$\overline{x}_1$, $\overline{x}_2$, $\overline{x}_3$}

      \grEmptyPath[form=1,x=-2.25,y=0,RA=1.5, rotation=0, prefix=le]{3}
      \AssignVertexLabel{le}{$v_1$, $v_2$, $v_3$}
      \grEmptyPath[form=1,x=2.25,y=0,RA=1.5, rotation=0, prefix=c]{3}
      \AssignVertexLabel{c}{$c_1$, $c_2$, $c_3$}
      \grEmptyPath[form=1,x=6.75,y=0,RA=1.5, rotation=0, prefix=lu]{3}
      \AssignVertexLabel{lu}{$v'_1$, $v'_2$, $v'_3$}

      \grEmptyPath[form=1,x=-1.5,y=-1,RA=1.5, rotation=0, prefix=v]{8}
      \AssignVertexLabel{v}{$p_1$, $p_2$, $p_3$, $p_4$, $p_5$, $p_6$, $p_7$, $p_8$}

      \grEmptyPath[form=1,x=-6.75,y=0,RA=1.5, rotation=0, prefix=s]{3}
      \AssignVertexLabel{s}{$s_1$, $s_2$, $s_3$}

      \grEmptyPath[form=1,x=11.25,y=0,RA=1.5, rotation=0, prefix=t]{3}
      \AssignVertexLabel{t}{$t_1$, $t_2$, $t_3$}

      \draw[\niceArrow] (le0)--(x0)[color=lightgray];
      \draw[\niceArrow] (le0)--(nex0)[color=lightgray];
      \draw[\niceArrow] (lu0)--(x0)[color=lightgray];
      \draw[\niceArrow] (lu0)--(nex0)[color=lightgray];
      \draw[\niceArrow] (le1)--(x1)[color=lightgray];
      \draw[\niceArrow] (le1)--(nex1)[color=lightgray];
      \draw[\niceArrow] (lu1)--(x1)[color=lightgray];
      \draw[\niceArrow] (lu1)--(nex1)[color=lightgray];
      \draw[\niceArrow] (le2)--(x2)[color=lightgray];
      \draw[\niceArrow] (le2)--(nex2)[color=lightgray];
      \draw[\niceArrow] (lu2)--(x2)[color=lightgray];
      \draw[\niceArrow] (lu2)--(nex2)[color=lightgray];

      \draw[\niceArrow] (v0)--(le0);
      \draw[\niceArrow] (v0) to [out=75,in=225,looseness=1] (le1);
      \draw[\niceArrow] (v0) to [out=45,in=255,looseness=1] (le1);

      \draw[\niceArrow] (v1)--(le1);
      \draw[\niceArrow] (v1) to [out=75,in=225,looseness=1] (le2);
      \draw[\niceArrow] (v1) to [out=45,in=255,looseness=1] (le2);

      \draw[\niceArrow] (v2)--(le2);
      \draw[\niceArrow] (v2) to [out=75,in=225,looseness=1] (c0);
      \draw[\niceArrow] (v2) to [out=45,in=255,looseness=1] (c0);

      \draw[\niceArrow] (v3)--(c0);
      \draw[\niceArrow] (v3) to [out=75,in=225,looseness=1] (c1);
      \draw[\niceArrow] (v3) to [out=45,in=255,looseness=1] (c1);

      \draw[\niceArrow] (v4)--(c1);
      \draw[\niceArrow] (v4) to [out=75,in=225,looseness=1] (c2);
      \draw[\niceArrow] (v4) to [out=45,in=255,looseness=1] (c2);

      \draw[\niceArrow] (v5)--(c2);
      \draw[\niceArrow] (v5) to [out=75,in=225,looseness=1] (lu0);
      \draw[\niceArrow] (v5) to [out=45,in=255,looseness=1] (lu0);

      \draw[\niceArrow] (v6)--(lu0);
      \draw[\niceArrow] (v6) to [out=75,in=225,looseness=1] (lu1);
      \draw[\niceArrow] (v6) to [out=45,in=255,looseness=1] (lu1);

      \draw[\niceArrow] (v7)--(lu1);
      \draw[\niceArrow] (v7) to [out=75,in=225,looseness=1] (lu2);
      \draw[\niceArrow] (v7) to [out=45,in=255,looseness=1] (lu2);

      \draw[\niceArrow] (x0)--(s0)[color=lightgray];
      \draw[\niceArrow] (nex0)--(s0)[color=lightgray];
      \draw[\niceArrow] (x1)--(s1)[color=lightgray];
      \draw[\niceArrow] (nex1)--(s1)[color=lightgray];
      \draw[\niceArrow] (x2)--(s2)[color=lightgray];
      \draw[\niceArrow] (nex2)--(s2)[color=lightgray];

      \draw[\niceArrow] (x0) to [out=-23,in=168,looseness=0.2] (t0) [color=lightgray];
      \draw[\niceArrow] (x0) to [out=-13,in=158,looseness=0.2] (t0) [color=lightgray];
      \draw[\niceArrow] (nex0) to [out=-25,in=163,looseness=0.3] (t0) [color=lightgray];
      \draw[\niceArrow] (nex0) to [out=-15,in=153,looseness=0.3] (t0) [color=lightgray];

      \draw[\niceArrow] (x1) to [out=-30,in=160,looseness=0.4] (t1) [color=lightgray];
      \draw[\niceArrow] (x1) to [out=-20,in=150,looseness=0.4] (t1) [color=lightgray];
      \draw[\niceArrow] (nex1) to [out=-35,in=155,looseness=0.5] (t1) [color=lightgray];
      \draw[\niceArrow] (nex1) to [out=-25,in=145,looseness=0.5] (t1) [color=lightgray];

      \draw[\niceArrow] (x2) to [out=-40,in=150,looseness=0.6] (t2) [color=lightgray];
      \draw[\niceArrow] (x2) to [out=-30,in=140,looseness=0.6] (t2) [color=lightgray];
      \draw[\niceArrow] (nex2) to [out=-60,in=130,looseness=0.7] (t2) [color=lightgray];
      \draw[\niceArrow] (nex2) to [out=-50,in=120,looseness=0.7] (t2) [color=lightgray];

      \draw[\niceArrow] (s1) to [out=165,in=15,looseness=1] (s0);
      \draw[\niceArrow] (s0) to [out=-30,in=-150,looseness=1] (s1);
      \draw[\niceArrow] (s0) to [out=-15,in=-165,looseness=1] (s1);

      \draw[\niceArrow] (s2) to [out=165,in=15,looseness=1] (s1);
      \draw[\niceArrow] (s1) to [out=-30,in=-150,looseness=1] (s2);
      \draw[\niceArrow] (s1) to [out=-15,in=-165,looseness=1] (s2);

      \draw[\niceArrow] (s2) to [out=-30,in=-150,looseness=1] (le0);
      \draw[\niceArrow] (s2) to [out=-15,in=-165,looseness=1] (le0);

      \draw[\niceArrow] (t0) to [out=165,in=15,looseness=1] (lu2);

      \draw[\niceArrow] (t1) to [out=165,in=15,looseness=1] (t0);
      \draw[\niceArrow] (t0) to [out=-30,in=-150,looseness=1] (t1);
      \draw[\niceArrow] (t0) to [out=-15,in=-165,looseness=1] (t1);

      \draw[\niceArrow] (t2) to [out=165,in=15,looseness=1] (t1);
      \draw[\niceArrow] (t1) to [out=-30,in=-150,looseness=1] (t2);
      \draw[\niceArrow] (t1) to [out=-15,in=-165,looseness=1] (t2);

      \draw[\niceArrow] (le0)--(le1);
      \draw[\niceArrow] (le1)--(le2);
      \draw[\niceArrow] (le2)--(c0);
      \draw[\niceArrow] (lu0)--(lu1);
      \draw[\niceArrow] (lu1)--(lu2);
      \draw[\niceArrow] (lu2) to [out=-15,in=-165,looseness=1] (t0);

      \draw[\niceArrow] (c0)--(x0);
      \draw[\niceArrow] (c0)--(x1);
      \draw[\niceArrow] (c0)--(x2);
      \draw[\niceArrow] (c1)--(nex0);
      \draw[\niceArrow] (c1)--(nex1);
      \draw[\niceArrow] (c1)--(x2);
      \draw[\niceArrow] (c2)--(nex0);
      \draw[\niceArrow] (c2)--(x1);
      \draw[\niceArrow] (c2)--(nex2);

    \end{tikzpicture}
    \caption{The digraph constructed in the proof of Theorem~\ref{thm:Left1Right2} for the CNF formula $(x_1 \lor x_2 \lor x_3) \land (\overline{x}_1 \lor \overline{x}_2 \lor x_3) \land (\overline{x}_1 \lor x_2 \lor \overline{x}_3)$.
    }\label{fig:Left1Right2}
  \end{figure}

  We construct an instance of the $(f,g)$\nobreakdash-bounded ordering problem as follows.
  Let $D$ be the digraph containing the vertices $x_i$ and $\overline{x}_i$ for each literal, two vertices $v_i$ and $v'_i$ for the $i^{\text{th}}$ variable for $i\in \{1,\ldots,n\}$, a vertex $c_j$ corresponding to the $j^{\text{th}}$ clause for $j\in\{1,\ldots,n\}$.
  We add two further vertices denoted by $s_i$ and $t_i$ for $i\in\{1,\ldots,n\}$, and an arc from $x_i$ and $\overline{x}_i$ to $s_i$ and a pair of parallel arcs to $t_i$.
  For each $i\in\{1,\ldots,n\}$, the vertices $v_i$ and $v'_i$ have arcs going to the vertices $x_i$ and $\overline{x}_i$.
  Each clause vertex $c_j$ has three arcs going out to the literals $x_i$ or $\overline{x}_i$ contained in $c_j$.
  Consider the vertices $s_1, \ldots, s_n, v_1,\ldots, v_n, c_1, \ldots, c_n, v'_1, \ldots, v'_n, t_1, \ldots, t_n$ in this order.
  Let $D$ contain an arc from the vertices $v_i$ and $v'_i$ to the vertex immediately to their right, and from each vertex $s_i$ and $t_i$ two parallel arcs going out to the next vertex on their right and one arc going out to the vertex on their left.
  Moreover, let $D$ contain an additional vertex between every two adjacent vertices of the sequence $v_1,\ldots, v_n, c_1, \ldots, c_n, v'_1, \ldots, v'_n$, and let $p_1,\ldots,p_{3n-1}$ denote these newly added vertices.
  For each $k\in\{1,\ldots,3n-1\}$, let $p_k$ have two parallel arcs going out to the succeeding vertex and one arc going out to the preceding vertex.
  Figure~\ref{fig:Left1Right2} gives an example for the construction.
  Let the bounds be $f(s_1)=g(s_1)=0$ and $f(t_n)=g(t_n)=1$, and for each vertex $v\notin \{s_1,t_n\}$, $f(v)=1$ and $g(v)=\delta(v)-2$.
  We prove that the 3\nobreakdash-XSAT\nobreakdash-3 problem is satisfiable if and only if the corresponding $(f,g)$\nobreakdash-bounded ordering problem is solvable.

  Suppose that the instance of the 3\nobreakdash-XSAT\nobreakdash-3 problem is satisfiable.
  Consider the following order of the vertices of $D$: First, list the vertices $s_1, \ldots, s_n, v_1,\ldots, v_n, c_1, \ldots, c_n, v'_1, \ldots, v'_n,$ $t_1, \ldots, t_n$ in this order, and put each additional vertex $p_k$ between its two neighbors.
  Then put the vertices of the true literals right after the vertex $s_n$ and the vertices of the false literals right before the vertex $t_1$.
  By the construction, it is easy to see that the resulting order is a feasible solution to the $(f,g)$\nobreakdash-bounded ordering problem.

  Conversely, suppose that there exists a feasible order $\sigma$ to the $(f,g)$\nobreakdash-bounded ordering problem.
  Notice that the additional vertices $p_1,\ldots,p_{3n-1}$ ensure that the vertices $v_1,\ldots, v_n,$ $c_1, \ldots, c_n, v'_1, \ldots, v'_n$ appear in this order.
  Furthermore, the vertices $s_1, \ldots, s_n$ must precede this sequence,
  and the vertices $t_1,\ldots, t_n$ must succeed it because of their outgoing parallel arcs.
  Therefore, the vertices
  $s_1, \ldots, s_n, v_1,\ldots, v_n,$ $c_1, \ldots, c_n,v'_1,\ldots, v'_n, t_1, \ldots, t_n$ appear in this order.
  For each $v_i$ and $v'_i$, one of the outgoing arcs must point to the left and the other two must point to the right, thus placing one of $x_i$ or $\overline{x}_i$ before $v_i$ and the other after $v'_i$.
  This means that, for each variable, one of the vertices $x_i$ and $\overline{x}_i$ precedes the vertex $v_i$ and the other one succeeds the vertex $v'_i$.
  By the fixed order of the vertices $v_1,\ldots, v_n, c_1, \ldots, c_n,$ $v'_1,\ldots, v'_n$, this implies that, for each variable, one of the two literals $x_i$ and $\overline{x}_i$ must be placed before $c_1,\ldots, c_n$ and the other one after them.
  Set the variable $x_i$ to true if the literal $x_i$ precedes the vertices $c_1, \ldots, c_n$, and to false otherwise, hence exactly the literals preceding the vertices $c_1,\ldots,c_n$ are true.
  This is a solution to the instance of the 3\nobreakdash-XSAT\nobreakdash-3 problem, because each vertex $c_j$ has three arcs going out to the vertices corresponding to the literals in $c_j$, and exactly one of these precedes the vertex $c_j$ by the bounds $f$ and $g$ --- and hence it also precedes $c_1,\ldots,c_n$.
  Therefore, exactly one literal is true in each clause.
\end{proof}

For the digraph $D$, the $(f,g)$\nobreakdash-bounded ordering problem defined in the proof is solvable if and only if the same problem with bounds $f(s_1) = g(s_1) = 0$ and $f(v) = g(v) = 1$ for each vertex $v \neq s_1$ is solvable.
Therefore, the proof also shows that the case with prescribed left-outdegrees (when $f \equiv g$) is NP-complete.
In contrast, the problem with prescribed left-outdegrees is polynomial-time solvable for symmetric digraphs (i.e., undirected graphs)~\cite{kiraly2018acyclic}.
\begin{corollary}\label{cor:strict}
  The $(f,g)$\nobreakdash-bounded ordering problem with $f \equiv g$ is NP-complete even when the bounds $f \equiv g$ are $0$ for one vertex and $1$ for all other vertices.
  \FBOX
\end{corollary}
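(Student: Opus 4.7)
The plan is to observe that the reduction from 3\nobreakdash-XSAT\nobreakdash-3 constructed in the proof of Theorem~\ref{thm:Left1Right2} already produces an instance whose bound functions satisfy $f\equiv g$ with values in $\{0,1\}$, so no new reduction is needed. The only task is to verify that, for the specific digraph $D$ built there, the bounds $f(v)=1$ and $g(v)=\delta(v)-2$ collapse to $f(v)=g(v)=1$ for every vertex except the designated first vertex $s_1$.

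First I would go through the vertex classes of $D$ and check that $\delta(v)=3$ for every $v\notin\{s_1,t_n\}$. The literal vertices $x_i,\overline{x}_i$ each send one arc to $s_i$ and two parallel arcs to $t_i$; the variable gadget vertices $v_i$ and $v'_i$ each send one arc to $x_i$, one to $\overline{x}_i$, and one to the neighboring $p$\nobreakdash-vertex; each clause vertex $c_j$ sends three arcs to the literals it contains; each $p_k$ has one arc to the preceding vertex and two parallel arcs to the succeeding one; and for $i\ge 2$ the vertex $s_i$ (and symmetrically $t_i$ for $i<n$) has one back arc plus two parallel forward arcs. In every case the outdegree is exactly $3$, so the prescriptions $f(v)=1$ and $g(v)=\delta(v)-2=1$ coincide.

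The remaining two vertices are handled directly by the bounds chosen in Theorem~\ref{thm:Left1Right2}. The first vertex $s_1$ already has $f(s_1)=g(s_1)=0$, matching the value $0$ required by the corollary. For the last vertex $t_n$ one has $\delta(t_n)=1$, and the theorem sets $f(t_n)=g(t_n)=\delta(t_n)=1$, which is consistent with $f\equiv g\equiv 1$ off of $s_1$. Thus the instance of the $(f,g)$\nobreakdash-bounded ordering problem constructed in Theorem~\ref{thm:Left1Right2} is literally an instance of the $f\equiv g$ variant with values $0$ at $s_1$ and $1$ elsewhere.

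Since Theorem~\ref{thm:Left1Right2} reduces 3\nobreakdash-XSAT\nobreakdash-3 to feasibility for exactly that instance, the $f\equiv g$ restriction is NP\nobreakdash-hard; membership in NP is immediate because a candidate ordering can be verified in polynomial time by computing each left\nobreakdash-outdegree and comparing with the prescription. The only substantive step is the degree bookkeeping in the previous paragraph, and I do not foresee any real obstacle beyond checking those counts carefully.
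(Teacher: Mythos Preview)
Your approach is essentially the same as the paper's: the text preceding the corollary simply observes that the instance built in Theorem~\ref{thm:Left1Right2} already has $f(s_1)=g(s_1)=0$ and $f(v)=g(v)=1$ for all other vertices, so the reduction from 3\nobreakdash-XSAT\nobreakdash-3 directly yields the corollary. Your degree bookkeeping is correct with one harmless slip: the third outgoing arc from each $v_i$ (and $v'_i$) goes to the next vertex in the base sequence $v_1,\dots,v_n,c_1,\dots,c_n,v'_1,\dots,v'_n,t_1$, not to a $p$\nobreakdash-vertex, but the outdegree is still~$3$.
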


In summary, the problem is NP-complete for all parameters $a \geq 1$ and $b \geq 2$, covering essentially all cases with stricter bounds than those in the $s$\nobreakdash-$t$ numbering problem.

In the next two sections, we investigate modified versions of the $(f,g)$\nobreakdash-bounded ordering problem.

\subsubsection{The \texorpdfstring{$d$-distance $(-\infty, g)$-bounded}{d-distance (-infty, g)-bounded} ordering problem}\label{sec:dDist}
This section introduces a simple modification of the $(-\infty, g)$\nobreakdash-bounded ordering problem, where the upper bound $g(v)$ only applies to the set of the (at most) $d$ vertices directly preceding the vertex $v$, i.e., for $i \in \{1, \dots, |V|\}$, the bound for the $i^{\text{th}}$ vertex applies to the set of the preceding $\min\{d, i-1\}$ vertices.
We refer to this problem as the $d$\nobreakdash-distance $(-\infty, g)$\nobreakdash-bounded ordering problem, inspired by the $d$\nobreakdash-distance matching problem~\cite{madarasi2020distance,madarasi2021matchings,madarasi2024matchings}.

In the special case when $d = |V|$, this problem is the usual $(-\infty, g)$\nobreakdash-bounded ordering problem, which was shown to be polynomial-time solvable in Section~\ref{sec:OnlyUpperBounds}.
Now we investigate the complexity of the $d$\nobreakdash-distance $(-\infty, g)$\nobreakdash-bounded ordering problem for other values of $d$.
\begin{theorem}
  For a digraph $D=(V,A)$ and a parameter $d=|V|-k$ for a fixed integer $k$, the $d$\nobreakdash-distance $(-\infty, g)$\nobreakdash-bounded ordering problem is polynomial-time solvable.
\end{theorem}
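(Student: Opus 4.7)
The plan is to exploit the fact that $d = n - k$ with $k$ fixed makes the sliding-window constraint differ from the standard preceding-set constraint only at the last $k$ positions. Indeed, for a vertex at position $i \leq n - k = d$, the window covers every preceding vertex, so its $d$-distance constraint coincides with the usual $(-\infty, g)$-bounded constraint. Moreover, the $d$-window of the vertex at position $n - k + j$ excludes precisely positions $1, \ldots, j - 1$, so the ``nonstandard'' part of the problem is entirely localized to the first $k - 1$ and the last $k$ positions.

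The algorithm enumerates, over all $O(n^{2k-1})$ choices, an ordered prefix $(v_1, \ldots, v_{k-1})$ and a disjoint ordered suffix $(u_1, \ldots, u_k)$ of distinct vertices. For each guess, we verify the $(-\infty, g)$ constraint $\delta(v_j, \{v_1, \ldots, v_{j-1}\}) \leq g(v_j)$ on the prefix, and the $d$-window constraint on the suffix, which for $u_j$ (at position $n - k + j$) simplifies to
\[
\delta(u_j) - \delta(u_j, \{v_1, \ldots, v_{j-1}\}) - \delta(u_j, \{u_{j+1}, \ldots, u_k\}) \leq g(u_j).
\]
Both checks depend only on the guess; crucially, the suffix constraint refers only to the \emph{set} $\{v_1, \ldots, v_{j-1}\}$, not to its internal order.

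It then remains to place the $n - 2k + 1$ middle vertices $M := V \setminus (\{v_1, \ldots, v_{k-1}\} \cup \{u_1, \ldots, u_k\})$ on positions $k, \ldots, n - k$. Since every such position is at most $d$, the $d$-distance constraint for a middle vertex $v$ coincides with the standard one and splits as $\delta(v, \{v_1, \ldots, v_{k-1}\}) + \delta(v, \text{middle vertices placed before } v) \leq g(v)$. Hence the middle subproblem is exactly a standard $(-\infty, g')$-bounded ordering on $D[M]$ with modified bounds $g'(v) := g(v) - \delta(v, \{v_1, \ldots, v_{k-1}\})$, which is polynomial-time solvable by Theorem~\ref{thm:gChar} (Algorithm~\ref{alg:UpperBounds}). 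If some guess admits such a middle extension, concatenate prefix, middle order, and suffix to obtain the output; otherwise declare infeasibility after exhausting all guesses.

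Correctness follows directly in both directions: any feasible $d$-distance ordering exhibits a prefix/suffix pair that the enumeration will try, and any ordering returned satisfies every $d$-window constraint by construction. The total running time is $O(n^{2k-1})$ times the polynomial cost of Algorithm~\ref{alg:UpperBounds}, hence polynomial for fixed $k$. The only conceptual subtlety—and the step that drives the whole reduction—is the observation that the windowed constraints on the last $k$ vertices couple to the first $k - 1$ positions only through their underlying set, which is what lets the middle decouple cleanly into the base case of Theorem~\ref{thm:gChar}.
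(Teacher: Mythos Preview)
Your proof is correct and follows essentially the same approach as the paper: enumerate the vertices occupying the boundary positions (where the $d$-window differs from the full preceding set), check their constraints directly, and reduce the placement of the remaining middle vertices to the standard $(-\infty,g')$-bounded ordering problem with adjusted upper bounds. The only difference is cosmetic: the paper guesses the first~$k$ and last~$k$ positions, whereas you observe that a prefix of length $k-1$ already suffices (since the window of the vertex at position $n-k+j$ excludes only positions $1,\ldots,j-1\le k-1$), saving a factor of~$n$ in the enumeration.
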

\begin{proof}
  We fix the first $k$ and the last $k$ vertices of the order in every possible way.
  For each fixed vertex, we can determine the set of its (at most) $(|V| - k)$ preceding vertices.
  We can then check whether these vertices violate the upper bounds.
  If they do not violate the bounds, then we aim to find a feasible order for the remaining middle $(|V| - 2k)$ vertices, or prove that the order of the already fixed vertices cannot be completed.
  For the middle $(|V| - 2k)$ vertices, define a new $(-\infty, g)$\nobreakdash-bounded ordering problem: For each vertex $v$ in the middle set, the set of its preceding (at most) $(|V| - k)$ vertices now includes the first $k$ fixed vertices.
  We remove these vertices and reduce the upper bound $g(v)$ by the number of arcs going to the fixed first $k$ vertices.
  It is easy to see that the order of the already fixed vertices can be completed into a feasible order if and only if the bounded ordering problem defined on the middle $(|V| - 2k)$ vertices is solvable.

  This method is polynomial because the first $k$ and last $k$ vertices can be fixed in $\frac{n!}{(n-2k)!}$ different ways, so the polynomial-time algorithm for the $(-\infty, g)$\nobreakdash-bounded ordering problem is called at most that many times for the middle $(|V| - 2k)$ vertices.
\end{proof}

A natural question arises: Does the problem remain solvable for smaller values of $d$?
We show that the problem becomes NP-complete for appropriately small $d$, including the case when $d$ is a constant.
\begin{theorem}
  For a digraph $D = (V, A)$, the $d$\nobreakdash-distance $(-\infty, g)$\nobreakdash-bounded ordering problem is NP-complete if $|V| = (d+1)(l+1) - 2$ for an integer $l = \Omega(|V|^c)$ with some constant $c > 0$.
\end{theorem}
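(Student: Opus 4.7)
The plan is to establish NP-completeness in two stages. Membership in NP is immediate: a candidate ordering can be verified in polynomial time by computing, for each vertex $\sigma_i$, its outdegree restricted to the preceding $\min\{d, i-1\}$ vertices and comparing against $g(\sigma_i)$. The interesting direction is NP-hardness, which I would obtain by reduction from an NP-complete problem whose global structure can be forced to be local. Natural candidates are 3-SAT (or 3-XSAT-3, as used in the proof of Theorem~\ref{thm:Left1Right2}), or the $(f,g)$-bounded ordering problem from Corollary~\ref{cor:strict}. In any case, the reduction converts an instance of size $n$ into a digraph $D=(V,A)$ with exactly $(d+1)(l+1)-2$ vertices, with $l$ chosen so that $l = \Omega(|V|^c)$ is satisfied.

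The construction would organise $V$ into $l+1$ consecutive \emph{blocks} of $d+1$ vertices each, with two boundary positions trimmed to land on the exact cardinality $(d+1)(l+1)-2$. A ``skeletal backbone'' of separator vertices linked by pairs of parallel arcs with tight bounds $g \equiv 0$ would force any feasible ordering to place the blocks in a prescribed sequence of $d+1$-sized windows---this is the same parallel-arc trick used in the proofs of Theorems~\ref{thm:oneStrictBound} and~\ref{thm:Left1Right2}, which exploits that parallel arcs make the $d$-distance constraint violated unless the two endpoints appear in a specified relative order. The remaining vertices within each block would encode the combinatorial choices of the source instance (variable assignments, clause satisfiability, etc.), and the arcs between them would be set so that, thanks to the $d$-distance restriction, the upper bound on a ``check'' vertex in block $i$ can be met only when the choices in block $i$ (and possibly a small, constant number of adjacent blocks captured by the window of size $d$) are consistent with a solution of the source instance. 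The size of the source instance would be chosen linear in $l$, so that $l$ grows polynomially in $|V|$ at any desired exponent $c$.

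The main obstacle will be the fine-grained design of the block gadget: it must (i) use exactly $d+1$ vertices, (ii) enforce the intended skeletal order using only upper-bound constraints, since no lower bounds are available in the $(-\infty,g)$-variant, and (iii) use the $d$-distance window to expose precisely the interactions needed by the source encoding while hiding all others. A secondary subtlety is the parameter accounting: the number of separator vertices used for padding, the source-instance size, and the value of $d$ must be coordinated so that the total vertex count lands exactly at $(d+1)(l+1)-2$ and the growth rate still satisfies $l = \Omega(|V|^c)$. I expect the formal verification of the equivalence ``source instance is a YES instance $\iff$ the constructed $d$-distance $(-\infty,g)$-bounded ordering instance is feasible'' to follow the template of Theorem~\ref{thm:Left1Right2}: the parallel-arc backbone pins down the skeleton of any feasible order, and the local $d$-distance window then reads off a solution of the source instance.
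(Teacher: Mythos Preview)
Your proposal is a research plan, not a proof: you explicitly flag ``the fine-grained design of the block gadget'' as an unsolved obstacle and never construct it. Without that construction, there is no reduction to verify. Moreover, the parallel-arc backbone trick you invoke from Theorems~\ref{thm:oneStrictBound} and~\ref{thm:Left1Right2} was used in the ordinary (not $d$-distance) setting; in the $d$-distance variant an arc only constrains its endpoints when they land within $d$ positions of each other, so a chain of parallel arcs does not by itself pin down a global skeletal order. You would need an additional argument explaining why the backbone vertices cannot drift more than $d$ apart, and you give none.

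The paper's route is entirely different and avoids all gadget design. It first handles $d=1$ by a direct reduction from Hamiltonian path: in the complement digraph with $g\equiv 0$, a $1$-distance feasible order is precisely a Hamiltonian path in the original graph. It then lifts $d=1$ to arbitrary $d$ by padding with $d-1$ disjoint complete symmetric digraphs on $l+1$ vertices each, again with $g\equiv 0$. The point is that any two vertices of the same complete digraph must sit at least $d+1$ positions apart in a feasible order; a counting argument then forces the order into a rigid pattern of two original vertices followed by one auxiliary vertex from each $K_j$, so the $d$-distance constraint on original vertices collapses to the $1$-distance constraint. This spacer idea is what makes the vertex count come out to exactly $(d+1)(l+1)-2$, and it sidesteps the per-$d$ gadget engineering your plan requires.
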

\begin{proof}
  We first prove the hardness for the case $d = 1$ by reduction from the Hamiltonian path problem, which is known to be NP-complete~\cite{karp1972reducibility}.
  Then, we extend the proof for larger values of $d$ by a reduction from the $1$\nobreakdash-distance $(-\infty, g)$\nobreakdash-bounded ordering problem.

  In the case of the $1$\nobreakdash-distance $(-\infty, g)$\nobreakdash-bounded ordering problem with $g \equiv 0$, the goal is to find an ordering of the vertices such that no vertex has an arc going out to the vertex directly preceding it.
  Let $G$ be the graph on $2l$ vertices for which we want to solve the Hamiltonian path problem.
  Consider the complement of $G$ and direct each edge in both directions.
  Let $D$ denote the resulting digraph.
  We show that there exists a Hamiltonian path in $G$ if and only if the $1$\nobreakdash-distance $(-\infty, g)$\nobreakdash-bounded ordering problem with $g \equiv 0$ is solvable for $D$.

  First, suppose that the graph $G$ contains a Hamiltonian path and order the vertices according to the Hamiltonian path.
  Then, no two adjacent vertices have an arc between them in $D$, so this order is a feasible solution to the $1$\nobreakdash-distance $(-\infty,g)$\nobreakdash-bounded ordering problem on $D$.
  Conversely, if there exists a solution to the $1$\nobreakdash-distance $(f,g)$\nobreakdash-bounded ordering problem on $D$, then there is no arc from any vertex to its preceding vertex.
  Because of the construction of $D$, this means that there is an edge in $G$ between any two adjacent vertices, so the vertices form a Hamiltonian path in this order.

  Now, for larger values of $d$, we reduce the case $d = 1$ to the cases where $d \geq 2$.
  Let $D'$ be the digraph on $2l$ vertices for which we want to solve the $1$\nobreakdash-distance $(-\infty, g)$\nobreakdash-bounded ordering problem with $g \equiv 0$, which is NP-complete as shown above.
  We extend $D'$ by adding $(d - 1)$ complete symmetric digraphs on $(l + 1)$ vertices.
  Let $K_1, \dots, K_{d-1}$ denote these newly added complete digraphs.
  The vertices in $D'$ are called the original vertices, and the vertices in $K_1, \dots, K_{d-1}$ are called auxiliary vertices.
  We show that, with the upper bound $g \equiv 0$, the $1$\nobreakdash-distance $(-\infty, g)$\nobreakdash-bounded ordering problem is solvable for $D'$ if and only if the $d$\nobreakdash-distance $(-\infty, g)$\nobreakdash-bounded ordering problem is solvable for $D$.

  First, consider a feasible order $\sigma'$ for the problem defined on $D'$.
  This means that there is no arc from any vertex to its preceding vertex in $\sigma'$.
  Consider the following order $\sigma$ of the vertices in $D$: List the vertices of $D'$ in pairs according to the order $\sigma'$.
  Before the first vertex, after the last vertex, and between every two adjacent blocks of two vertices, put one auxiliary vertex from each complete digraph $K_j$ in the order of the indices of the complete digraphs.
  This order $\sigma$ is a solution to the problem defined on $D$, because the auxiliary vertices are only adjacent with vertices from the same complete digraph, and these vertices are at least $(d + 1)$ distance apart from each other.
  For each original vertex, the set of its preceding (at most) $d$ vertices contains $(d - 1)$ auxiliary vertices and (at most) one original vertex.
  Therefore, if there are no arcs going out from the vertices to their preceding vertex in $\sigma'$, then there are no arcs going out from the vertices to their preceding (at most) $d$ vertices in $\sigma$.

  Conversely, suppose there exists a feasible order $\sigma$ to the problem defined on $D$.
  We show that in this order, there are two original vertices and $(d - 1)$ auxiliary vertices alternately such that the first and last $(d - 1)$ vertices are auxiliary vertices.
  Suppose indirectly that the order does not follow this pattern.
  Then, for some $i\in \{1,\ldots,l\}$, at the positions of the $i^{\text{th}}$ pair of original vertices, there is at least one auxiliary vertex from the complete digraph $K_j$.
  Notice that there are at most $(i(d+1) - 1)$ vertices before this vertex and at most $((l + 1 - i) (d + 1) - 1)$ vertices after this vertex.
  Furthermore, each two auxiliary vertices from the same complete digraph $K_j$ must be at least $(d + 1)$ distance apart from each other.
  From these, we get the following upper bound for the number of the vertices in the complete digraph $K_j$:
  \[
    \left\lfloor\frac{i(d+1)-1}{d+1}\right\rfloor+1+\left\lfloor\frac{\left(l+1-i\right)(d+1)-1}{d+1}\right\rfloor<l+1.
  \]
  This is a contradiction, because each complete digraph has $(l + 1)$ vertices.
  So the order $\sigma$ follows the pattern which was described above.
  This means that for each vertex, the set of its preceding (at most) $d$ vertices contains $(d - 1)$ auxiliary vertices and (at most) one original vertex.
  By leaving out the auxiliary vertices from $\sigma$, there are no arcs from the vertices to their preceding vertex in $D'$.
  Therefore, the resulting order is a feasible solution to the $1$\nobreakdash-distance $(-\infty, g)$\nobreakdash-bounded ordering problem with bound $g\equiv 0$.
\end{proof}

\subsubsection{Lexicographical \texorpdfstring{$(-\infty, g)$-bounded}{(-infty, g)-bounded} orders}\label{sec:lexicographical}
This section discusses different types of lexicographically minimal or maximal $(-\infty, g)$\nobreakdash-bounded orders.
First, an order is \emph{decreasingly minimal} if the vector of the left-outdegrees ordered non-increasingly is lexicographically minimal.
This concept arises in applications such as rank aggregation (see Section~\ref{sec:introduction}).
In this case, the left-outdegree of a vertex $v$ measures how ``unfair'' the common ranking appears for the candidate $v$.
A decreasingly minimal order minimizes the ``disappointment'' of the most disappointed candidate, then that of the second most disappointed, and so on.
Unfortunately, the decreasingly minimal $(-\infty,g)$\nobreakdash-bounded ordering problem turns out to be NP-hard.
\begin{theorem}\label{thm:decMinBoundedOrder}
  It is NP-hard to find a decreasingly minimal $(-\infty, g)$\nobreakdash-bounded order for $g \equiv 1$.
\end{theorem}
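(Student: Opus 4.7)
The plan is to reduce an NP-complete problem, for instance 3-SAT, to the decision version of our problem: given a digraph $D$ and an integer $k$, is there a $(-\infty,1)$-bounded order of $D$ whose decreasingly sorted left-outdegree vector is componentwise at most $(1^{k},0^{|V|-k})$? NP-hardness of this decision variant immediately implies NP-hardness of the decreasingly minimal search problem, since the latter can answer the decision question by a single comparison. The first step is a reformulation: because $g\equiv 1$ forces every left-outdegree into $\{0,1\}$, the sorted non-increasing vector always has the shape $(1^{s},0^{n-s})$, and lexicographic minimality is equivalent to minimizing $s$, the total number of left-going arcs. Writing $B\subseteq A$ for this set, the problem becomes: find $B$ of minimum cardinality such that every vertex has outdegree at most one in $B$ and $(V,A\setminus B)$ is acyclic; a feasible order is then obtained by topologically sorting $A\setminus B$ and placing the tails of arcs in $B$ accordingly.

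Given a 3-SAT formula $\phi$ on $n$ variables and $m$ clauses, I would use variable and clause gadgets in the spirit of Theorem~\ref{thm:Left1Right2}. For each variable $x_i$, a $2$-cycle on two new vertices $T_i,F_i$ must be linearized by any feasible ordering, contributing exactly one backward arc per variable, with $T_i$ preceding $F_i$ encoding $x_i=$~true. For each clause $c_j=\ell_1\vee \ell_2\vee \ell_3$, I would introduce a directed triangle on new vertices $u_{j1},u_{j2},u_{j3}$, together with one arc from each $u_{jk}$ to the vertex encoding the ``true'' state of $\ell_k$ (that is, $T_i$ if $\ell_k=x_i$ and $F_i$ if $\ell_k=\overline{x}_i$). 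The triangle forces at least one backward arc per clause, and the literal arcs are to be wired so that whenever every literal of $c_j$ is false in the encoded assignment, at least one further backward arc is unavoidable somewhere in the clause gadget. Taking the threshold $k=n+m$, a satisfying assignment of $\phi$ then corresponds precisely to a feasible ordering of cost $n+m$. Auxiliary chains of vertices connected by parallel arcs, exactly as in the proof of Theorem~\ref{thm:Left1Right2}, would be used to pin the global position of the variable and clause gadgets so that the intended interaction between them is the only one available.

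The main obstacle will be to prevent the clause gadget from circumventing the per-vertex cap of one backward arc by off-loading its deficit onto unused backward capacity of variable-gadget vertices, which would collapse the reduction. Closing this loophole is expected to require either saturating each $T_i$ and $F_i$ by additional mandatory incident arcs that exhaust its backward capacity, or routing the clause-to-literal arcs through dedicated buffer vertices whose capacity is already consumed by their own local structure. The correctness argument would then follow the style of the proof of Theorem~\ref{thm:Left1Right2}: the fixed positions enforced by the parallel-arc chains, together with the exact backward capacity available inside each gadget, leave precisely enough slack to extract a truth assignment from any ordering of cost $n+m$, while simultaneously forcing that assignment to satisfy every clause.
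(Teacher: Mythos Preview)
Your reformulation is correct and matches the paper's first step: under $g\equiv 1$ every left-outdegree is in $\{0,1\}$, so the decreasingly minimal order is exactly one that minimises the number of left-going arcs, i.e., one that finds a minimum-size arc set $B$ with per-vertex outdegree at most one such that $A\setminus B$ is acyclic.

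The genuine gap is in your reduction. You propose a direct 3-SAT construction with $2$-cycle variable gadgets and triangle clause gadgets, and you correctly identify the obstacle yourself: nothing in the sketch prevents a clause gadget from exporting its required backward arc to a variable-gadget vertex whose ``outdegree-one'' budget is not yet spent. Your suggested fixes (saturating $T_i,F_i$, or routing through buffer vertices) are plausible but not carried out, and this is precisely the delicate part --- every auxiliary device you add must itself obey the outdegree-one cap without creating new cycles or new slack. As written, the proposal is a plan with its key lemma still open, not a proof.

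The paper sidesteps this difficulty entirely by reducing from the minimum feedback arc set problem rather than from SAT. Given $D'=(V',A')$, subdivide every arc $a'\in A'$ by a new vertex $v_{a'}$. In the resulting digraph $D$, every $v_{a'}$ has outdegree exactly one, so the outdegree-one constraint on $B$ is automatically compatible with choosing, for any $F\subseteq A'$, the set $B=\{\,\text{the arc leaving }v_{a'}:a'\in F\,\}$. One then checks that $F$ is a feedback arc set in $D'$ of size $k$ if and only if $D$ admits such a $B$ of size $k$. This is a one-line gadget that removes exactly the obstacle you are struggling with: by making the only vertices that ever need a backward arc have outdegree one to begin with, there is no ``off-loading'' to worry about. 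I would recommend abandoning the SAT route and using this subdivision trick instead.
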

\begin{proof}
  The proof consists of two parts.
  First, we show that the problem of finding a decreasingly minimal $(-\infty, g)$\nobreakdash-bounded order is equivalent to finding a minimum-size in-branching that covers all directed cycles.
  Then, we prove that the latter problem is APX-hard.
  By definition, an order is $(-\infty, g)$\nobreakdash-bounded for $g \equiv 1$ if and only if the arcs going to the left form an in-branching.
  Clearly, such an order is decreasingly minimal if and only if it minimizes the number of vertices with left-outdegree exactly one, in other words, it minimizes the size of the in-branching.

  We now show that, for a digraph $D = (V, A)$, finding an order such that the left-going arcs form a minimum-size in-branching is essentially the same as finding a minimum-size in-branching that covers all directed cycles.
  Suppose we have an order where the left-going arcs form an in-branching of size $k$.
  Since the right-going arcs form an acyclic subgraph, the in-branching must cover all directed cycles.
  For the other direction, suppose we have an in-branching $B \subseteq A$ of size $k$ that covers all directed cycles.
  Let $\sigma$ be the topological ordering of the subgraph $D \setminus B$.
  The left-going arcs in $\sigma$ form a subset of $B$, so $\sigma$ is an order in which the left-going arcs form an in-branching of size at most $k$.
  Thus, the problem of finding a decreasingly minimal $(-\infty, g)$\nobreakdash-bounded ordering is equivalent to finding a minimum-size in-branching that covers all directed cycles.

  Next, we show that finding a minimum-size in-branching that covers all directed cycles is APX-hard by an approximation-preserving reduction from the feedback arc set problem, which is known to be APX-hard~\cite{kann1992approximability}.
  Let $D' = (V', A')$ be a digraph, and construct a new digraph $D = (V, A)$ by splitting every arc $a' \in A'$ with a new vertex $v_{a'}$.

  We prove that there exists a feedback arc set of size at most $k$ in $D'$ if and only if there exists an in-branching of size at most $k$ covering all directed cycles in $D$.
  Suppose there exists a feedback arc set $F \subseteq A'$ of size $k$ in $D'$.
  Consider the set of arcs that are going out from the vertices $v_{a'}$ for some $a' \in F$.
  These arcs form an in-branching of size $k$ that covers all directed cycles in $D$.
  For the other direction, suppose there exists an in-branching $B \subseteq A$ in $D$ that covers all directed cycles.
  Let $F \subseteq A'$ be the set of arcs $a'$ for which there exists an arc in $B$ incident to the vertex $v_{a'}$.
  Then, $F$ is a feedback arc set in $D'$ of size at most $k$.

  Thus, solving the problem of finding a minimum-size in-branching that covers all directed cycles in $D$ can be used to approximate the minimum feedback arc set problem in $D'$.
  Since the feedback arc set problem is APX-hard, it follows that finding a minimum-size in-branching covering all directed cycles is also APX-hard.
  This completes the proof, as the latter problem is equivalent to finding a decreasingly minimal $(-\infty, g)$\nobreakdash-bounded ordering.
\end{proof}

Next, we define other types of lexicographical orderings:
An order of the vertices is lexicographically minimal (maximal) from the left if the vector of left-outdegrees of the vertices from left to right is lexicographically minimal (maximal).
Similarly, an order is lexicographically minimal (maximal) from the right if the vector of left-outdegrees in the given order (from right to left) is lexicographically minimal (maximal).

Note that if the vertices of a simple digraph have an order in which the left-outdegree of the $i^{\text{th}}$ vertex is $\ell$, then in the same order for the complement digraph, the left-outdegree of the $i^{\text{th}}$ vertex is $(i - \ell - 1)$.
This fact leads to the following corollary.
\begin{corollary}\label{cor:lexMinEquivLexMax}
  Let $D$ be a simple digraph, and let $\overline{D}$ denote its complement.
  Then a lexicographically minimal order from the left for $D$ is lexicographically maximal from the left for $\overline{D}$, and a lexicographically minimal order from the right for $D$ is lexicographically maximal from the right for $\overline{D}$.
  \FBOX
\end{corollary}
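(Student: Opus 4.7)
The plan is to derive the statement directly from the observation made just before the corollary: for any ordering $\sigma=(\sigma_1,\ldots,\sigma_n)$ and any position $i$, the vertex $\sigma_i$ has exactly $i-1$ preceding vertices, and the arcs from $\sigma_i$ to these $i-1$ vertices that exist in $D$ are precisely the ones that do not exist in $\overline{D}$ (using that $D$ is simple and loop-free, so no arc and its negation coexist). Hence, if $\delta^{\ell}_D(\sigma_i)=\ell_i$, then $\delta^{\ell}_{\overline{D}}(\sigma_i)=(i-1)-\ell_i$. The key feature of the map $\ell_i\mapsto (i-1)-\ell_i$ is that at each fixed position $i$ it is strictly order-reversing.

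Given two distinct orderings $\sigma$ and $\tau$ with left-outdegree sequences $(\ell_1,\ldots,\ell_n)$ and $(m_1,\ldots,m_n)$ in $D$, and the corresponding sequences $((i-1)-\ell_i)_{i=1}^n$ and $((i-1)-m_i)_{i=1}^n$ in $\overline{D}$, I would compare them lexicographically from the left. Let $j$ be the first index where $\ell_j\neq m_j$. By the order-reversing property at position $j$, the inequality $\ell_j<m_j$ in $D$ is equivalent to $(j-1)-\ell_j>(j-1)-m_j$ in $\overline{D}$, and at all earlier positions the entries agree in both digraphs. Therefore $\sigma$ precedes $\tau$ in the lexicographic-from-the-left order on $D$ if and only if $\tau$ precedes $\sigma$ in the lexicographic-from-the-left order on $\overline{D}$. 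Consequently, an ordering is lex-minimal from the left for $D$ exactly when it is lex-maximal from the left for $\overline{D}$.

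For the "from the right" part, the argument is identical once we reverse the indexing: comparing the reversed sequences $(\ell_n,\ell_{n-1},\ldots,\ell_1)$ and $(m_n,m_{n-1},\ldots,m_1)$ lexicographically, the first index of disagreement is again a single position $j$, and the position-wise order-reversal $\ell_j\mapsto (j-1)-\ell_j$ still applies coordinate-wise, so the same equivalence between lex-minimality in $D$ and lex-maximality in $\overline{D}$ follows.

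There is no real obstacle; the only subtlety is verifying that the bijection $\ell\mapsto (i-1)-\ell$ is indeed applied coordinate-by-coordinate (so the comparison reduces to a single scalar reversal at the first index of disagreement), which is immediate from the fact that the length $n$ of the vector and the "preceding set size" $i-1$ at each position are the same for every ordering of $V$.
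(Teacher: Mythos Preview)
Your proposal is correct and follows exactly the approach the paper intends: the corollary is stated as an immediate consequence of the preceding observation that the left-outdegree of the $i^{\text{th}}$ vertex in $\overline{D}$ equals $(i-1)-\ell$ when it equals $\ell$ in $D$, and you simply spell out why a coordinate-wise order-reversing map turns lex-minimality into lex-maximality. The paper gives no further details beyond that observation, so your expansion is entirely in line with it.
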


Now we prove that both problems are NP-hard.
\begin{theorem}\label{thm:leftLexMin}
  It is NP-hard to find a lexicographically minimal (resp.\ maximal) $(-\infty, +\infty)$-bounded order from the left, even for a simple undirected graph, i.e., a symmetric digraph.
\end{theorem}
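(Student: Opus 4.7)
The plan is a reduction from the NP-hard maximum independent set problem~\cite{karp1972reducibility}. Viewing an undirected graph $G=(V,E)$ as a symmetric digraph $D$, the first observation is that in any vertex order $\sigma$, the left-outdegree $\delta^{\ell}(\sigma_i)$ equals the number of $G$-neighbors of $\sigma_i$ among $\{\sigma_1,\ldots,\sigma_{i-1}\}$. Hence $\delta^{\ell}(\sigma_j)=0$ for every $j=1,\ldots,k$ if and only if $\{\sigma_1,\ldots,\sigma_k\}$ is an independent set of $G$.

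Building on this, the key structural claim I would establish is that the vector of left-outdegrees of any lexicographically minimal order from the left begins with exactly $\alpha(G)$ zeros, where $\alpha(G)$ denotes the independence number of $G$. The ``$\geq \alpha(G)$'' direction is proved by exhibiting an order that starts with any maximum independent set (in arbitrary order), which produces $\alpha(G)$ leading zeros and thus forms an upper bound in the lex order; the ``$\leq \alpha(G)$'' direction is immediate from the equivalence above, since any order with more than $\alpha(G)$ leading zeros would exhibit a larger independent set. Consequently, from a lex-min order one could read off $\alpha(G)$ by simply counting leading zeros, so a polynomial-time algorithm for computing a lex-min order would solve the independent set problem in polynomial time, establishing the lex-min case.

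For the lex-max case, I would invoke Corollary~\ref{cor:lexMinEquivLexMax}: a lex-max order from the left for $D$ is precisely a lex-min order from the left for the complement $\overline{D}$. Applying the previous paragraph to $\overline{G}$ (whose independence number equals the clique number of $G$, itself NP-hard to compute), the lex-max problem is NP-hard as well. One could equivalently argue directly that a lex-max order forces the first $\omega(G)$ vertices to form a maximum clique of $G$.

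I do not anticipate any serious obstacle. The only point requiring care is the tautology that, among left-outdegree vectors sharing the same first $\ell$ coordinates, lex-min strictly prefers a smaller $(\ell+1)$-th coordinate — this is used to justify that lex-min genuinely maximizes the number of leading zeros rather than merely balancing the entries. Everything else is routine once the structural claim is pinned down.
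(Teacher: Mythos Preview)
Your proposal is correct and follows essentially the same approach as the paper: a reduction from the independent set problem showing that the number of leading zeros in the left-outdegree vector of a lex-min order equals $\alpha(G)$, followed by invoking Corollary~\ref{cor:lexMinEquivLexMax} for the lex-max case. The paper's proof is slightly terser but the argument is identical in substance.
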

\begin{proof}
  The proof for the lexicographically minimal case is by reduction from the independent set problem, which is NP-complete~\cite{karp1972reducibility}.
  Consider the left-degree vector of a lexicographically minimal order from the left for an undirected graph $G=(V,E)$.
  We prove that the number of zeros at the beginning of this vector is equal to the size of the maximum independent set in $G$.
  If the vector starts with $k$ zeros, then the first $k$ vertices in the order form an independent set in $G$.
  Conversely, if there are $k$ independent vertices in $G$, then we can arrange these $k$ vertices at the start of the order, ensuring that the left-degree vector begins with $k$ zeros, which in turn means that the lexicographically minimal order also starts with (at least) $k$ zeros.
  Thus, finding a lexicographically minimal order from the left is equivalent to solving the independent set problem.

  Finally, note that the hardness of finding a lexicographically maximal order immediately follows by the hardness of minimization and by Corollary~\ref{cor:lexMinEquivLexMax}.
\end{proof}

Next, we show that the analogous problem of finding a lexicographically minimal order from the right is also NP-complete.
\begin{theorem}\label{thm:rightLexMin}
  It is NP-hard to find a lexicographically minimal (resp.\ maximal) $(-\infty, +\infty)$-bounded order from the right, even for a simple undirected graph, i.e., a symmetric digraph.
\end{theorem}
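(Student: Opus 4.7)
We focus on the lexicographically minimal case; the maximal case follows immediately by applying Corollary~\ref{cor:lexMinEquivLexMax} to the complement graph. The plan is to reduce from the independent set problem, in the spirit of Theorem~\ref{thm:leftLexMin}, but a direct analogue via trailing zeros of the lex-min-from-right vector fails: in an undirected graph the number of trailing zeros of $(d_n,d_{n-1},\ldots,d_1)$ equals only the number of isolated vertices of $G$, which is polynomially computable. To circumvent this, observe that for symmetric digraphs the identity
\[
  d_i(\sigma)=d_{G[V\setminus R_i]}(\sigma_i)\qquad\text{with}\qquad R_i=\{\sigma_{i+1},\ldots,\sigma_n\}
\]
shows that any lex-min-from-right order is a smallest-last (degeneracy-style) sweep in which ties among minimum-degree vertices must be broken so as to minimize the lexicographically smallest tail of subsequent values; it is precisely this global look-ahead tie-breaking that carries the NP-hardness.

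Building on this observation, the plan is to construct, from an instance $(G,k)$ of independent set, an auxiliary undirected graph $G'$ obtained from $G$ by adjoining a rigid gadget $H$ with the following two properties. First, the vertices of $H$ should be strictly heavier in degree than those of $V(G)$, so that by the greedy characterization above they are forced to occupy an initial prefix of the ordering and the vertices of $V(G)$ fill the trailing $|V(G)|$ positions. Second, once this prefix is pinned down, the induced lex-min-from-right problem on the $V(G)$-block should be driven by the lex-min objective to place a maximum-size independent set of $G$ at the extreme right, so that a prescribed coordinate of the resulting degree vector encodes $|V(G)|-\alpha(G)$. Deciding whether $G$ admits an independent set of size at least $k$ then reduces in polynomial time to computing a lex-min-from-right order of $G'$ and reading off that coordinate.

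The main obstacle is the design of the gadget. We must ensure simultaneously that (i) degree considerations rigidly enforce the $H$-versus-$V(G)$ bipartition of the ordering, and (ii) the look-ahead tie-breaking inside the $V(G)$-block faithfully encodes the independent-set structure of $G$ rather than some easier parameter (such as the count of isolated vertices, which is what a naive attempt yields). A natural candidate is to take $H$ to consist of a large clique joined completely to $V(G)$, augmented by pendant-like attachments chosen so that every non-independent choice of a trailing vertex strictly increases a specific coordinate of the vector; verifying that the constructed $G'$ indeed satisfies (i) and (ii), and in particular that the $k$-th coordinate from the right falls below a prescribed threshold iff $\alpha(G)\ge k$, is the technical heart of the proof. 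Once this is established, NP-hardness of lex-min-from-right follows, and the lex-max case is obtained via Corollary~\ref{cor:lexMinEquivLexMax}.
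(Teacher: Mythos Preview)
Your proposal is a plan, not a proof, and the plan has a genuine gap. You never actually construct the gadget $H$: you describe ``a natural candidate'' (a large clique joined completely to $V(G)$, plus unspecified ``pendant-like attachments'') and then explicitly defer ``the technical heart of the proof'' without carrying it out. Worse, the constraints you impose on $H$ are in tension with your goal. Joining a clique completely to $V(G)$ shifts the degree of every $V(G)$-vertex by the same constant $|H|$, so inside the $V(G)$-block the lex-min-from-right objective reduces to a smallest-last sweep on $G$ itself; but a smallest-last sweep with lexicographic look-ahead is governed by degeneracy-type invariants, and you give no argument why it should encode $\alpha(G)$ rather than some polynomially computable quantity. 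The unspecified pendant attachments are meant to fix this, yet you provide neither a construction nor a verification, so the reduction is not established.

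The paper's proof is considerably simpler and avoids any auxiliary gadget. It reduces from the maximum clique problem restricted to the minimum-degree vertices (still NP-hard, e.g.\ via clique on regular graphs). The key observation is that in any order the $i^{\text{th}}$-from-last left-degree is at least $d_{\min}-i+1$, and that equality for the last $k$ positions forces those $k$ vertices to be pairwise adjacent minimum-degree vertices; conversely, placing a $k$-clique of minimum-degree vertices at the end achieves the values $d_{\min}-k+1,\ldots,d_{\min}$. Hence the length of the maximal strictly increasing tail of the left-degree vector in a lex-min-from-right order equals the maximum clique size among minimum-degree vertices, so computing such an order solves an NP-hard problem directly on $G$. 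The lex-max case then follows from Corollary~\ref{cor:lexMinEquivLexMax}, as you note.
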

\begin{proof}
  The proof for the lexicographically minimal case is by reduction from the maximum clique problem, which is NP-complete~\cite{karp1972reducibility}.
  Consider the left-degree vector of a lexicographically minimal order from the right for an undirected graph $G=(V,E)$.
  We show that the length of the maximal strictly increasing sequence at the end of this vector is equal to the size of the maximum clique in the graph induced by the vertices with minimum degree in $G$.
  This problem is equivalent to the maximum clique problem.
  First, observe that in every minimal order, the last vertex is a vertex with minimum degree.
  If the vector ends with a strictly increasing sequence of length $k$, then the second-to-last vertex is also a vertex with minimum degree, and it has an arc to the last vertex.
  Similarly, the third-to-last vertex is a vertex with minimum degree, and it has arcs to both the last and the second-to-last vertices.
  This pattern continues for each $i = 1,\dots,k$: the $i^\text{th}$ last vertex is a vertex with minimum degree and has an arc to each of the vertices that follow it in the order.
  This means that the last $k$ vertices in the order form a clique in $G$.

  Conversely, let $k$ denote the size of the maximum clique in the induced subgraph of $G$ formed by the vertices with minimum degree.
  Let us denote the minimum degree by $d_{\min}$.
  Consider an order in which the last $k$ vertices correspond to the vertices in a maximum clique.
  Clearly, the last $k$ left-degrees are $d_{\min} - k + 1, \dots, d_{\min}$.
  Notice that in any order, the left-degree of the $i^\text{th}$ last vertex is at least $(d_{\min} - i + 1)$.
  Therefore, the last $k$ values of any lexicographically minimal order must be $d_{\min} - k + 1, \dots, d_{\min}$.
  This implies that the left-degree vector of a lexicographically minimal order ends with a strictly increasing sequence of length $k$.

  Thus, we have shown that the length of the maximal strictly increasing sequence at the end of the left-degree vector in a lexicographically minimal order is exactly the size of the maximum clique in the subgraph induced by the vertices with minimum degree.
  Since the latter problem is NP-hard, it follows that finding a lexicographically minimal order from the right is also NP-hard.

  The hardness of finding a lexicographically maximal order immediately follows by the hardness of minimization and by Corollary~\ref{cor:lexMinEquivLexMax}.
\end{proof}

\subsection{Indegree- and outdegree-bounded ordering problems}\label{sec:simultaneousBounds}

This section investigates Problem~\ref{prob:degreeBounded}, as a straightforward generalization of the $(f,g)$-bounded ordering problem discussed in Section~\ref{sec:fgBounded}.
Here, we have simultaneous bounds for the left-outdegrees and right-indegrees.
Observe that a lower or upper bound on the right-outdegrees can be expressed as an upper or lower bound on the left-outdegrees, respectively, and vice-versa.
An analogous statement holds for the indegrees.
Therefore, any vertex-ordering problem with simultaneous bounds on the left- and right-outdegrees and left- and right-indegrees can be reduced to Problem~\ref{prob:degreeBounded}.
In this section, we provide a comprehensive complexity analysis for cases where (lower, upper, or exact) bounds are given for the left-outdegrees and right-indegrees.
See Table~\ref{tab:simultaneous} for a summary.

First consider the cases when bounds are only imposed for either the left-outdegrees or the right-indegrees.
The diagonal of Table~\ref{tab:simultaneous} corresponds to cases in which a single type of bound is given.
We have already proved the complexities of the problems with lower bounded, upper bounded, or prescribed left-outdegrees in  Theorem~\ref{thm:fChar}, Theorem~\ref{thm:gChar}, and Corollary~\ref{cor:strict}, respectively.
These results can be applied to the cases where only the right-indegrees are bounded, by reversing both the arcs of the digraph and the vertex orderings.
Furthermore, the problem with simultaneous lower and upper bounds for the left-outdegrees is equivalent to the problem with simultaneous lower and upper bounds for the right-indegrees, by reversing both the digraph and the vertex orderings.
Since the former problem is the $(f,g)$\nobreakdash-bounded ordering problem, which is NP-complete as shown in Corollary~\ref{cor:LowerAndUpper}, it follows that the latter problem is also NP-complete.

Next, we consider the problems with a prescription for the left-outdegrees or right-indegrees and simultaneously a lower or upper bound for the right-indegrees, or left-outdegrees, respectively.
If the left-outdegrees are prescribed, then the NP-completeness of the problem follows by Corollary~\ref{cor:strict} applied in the case when the lower or upper bound is $-\infty$ or $+\infty$, respectively.
If the prescription is given for the right-indegrees, then the problem is also NP-complete by considering the reversed orders for the reversed digraph.

Now we prove the complexities of the remaining cases.
\begin{theorem}\label{thm:outUpperInLower}
  For a digraph $D = (V, A)$ with an upper bound $g_{\delta}: V \to \Z_+\cup \{+\infty\}$ on the left-outdegrees and a lower bound $f_{\varrho}: V \to \Z_+\cup \{-\infty\}$ on the right-indegrees, a vertex order can be found in polynomial time such that $\delta^{\ell}(v) \leq g_{\delta}(v)$ and $\varrho^{r}(v) \geq f_{\varrho}(v)$ hold for each $v \in V$.
\end{theorem}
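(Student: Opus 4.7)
The plan is to solve the problem with a greedy algorithm that constructs the ordering from right to left, extending the approach of Algorithm~\ref{alg:UpperBounds}. I would maintain the set $V' \subseteq V$ of vertices not yet placed, initialized to $V$, and in each iteration search for some $v \in V'$ satisfying both
\[
  \delta(v, V' \setminus \{v\}) \leq g_\delta(v) \qquad \text{and} \qquad \varrho(v, V \setminus V') \geq f_\varrho(v).
\]
The first condition forces $\delta^\ell(v) \leq g_\delta(v)$ once $v$ is placed immediately to the left of the already-fixed suffix, while the second yields $\varrho^r(v) \geq f_\varrho(v)$, because $V \setminus V'$ will coincide with the set to the right of $v$ in the final order. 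If no valid $v$ exists, the algorithm declares infeasibility; otherwise it picks any such $v$, removes it from $V'$, and iterates. Each iteration is clearly polynomial, so it remains to verify correctness.

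Correctness will be established by induction on $|V'|$ using the invariant that whenever the current placed suffix can be extended to a feasible full ordering, (a) some $v \in V'$ is valid and (b) any valid $v$ may be chosen without losing feasibility. Claim (a) is immediate: in any feasible extension, the vertex occupying the rightmost position in $V'$ satisfies both conditions by construction. Combined with (b), this yields an ordering of $V$ whenever one exists, and correct detection of infeasibility otherwise.

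The main obstacle is claim (b), which I would settle via an exchange argument. Let $w_1, \ldots, w_k$ be a feasible extension of the current suffix (with $w_k$ immediately to the left of the already-placed vertices), and let $v = w_j$ be an arbitrary valid choice. I will prove that the ordering obtained by moving $v$ to the last position among $V'$, namely $w_1, \ldots, w_{j-1}, w_{j+1}, \ldots, w_k, v$, is still feasible. For $w_i$ with $i < j$, the relative order between $w_i$ and every other vertex is unchanged (the vertex $v$ remains to the right of $w_i$), so all constraints at $w_i$ persist. For $w_i$ with $j < i \leq k$, the vertex $v$ shifts from the left of $w_i$ to its right; this decreases $\delta^\ell(w_i)$ by $[w_i v \in A]$ and increases $\varrho^r(w_i)$ by $[v w_i \in A]$, both of which can only relax the constraints at $w_i$. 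Finally, at the shifted position of $v$ itself, the two constraints reduce to exactly the validity conditions under which $v$ was selected. This completes claim (b), and hence the correctness of the algorithm.
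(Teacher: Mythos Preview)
Your proposal is correct and uses the same greedy right-to-left algorithm as the paper, with the same combined validity test on each candidate vertex. The only difference lies in the correctness argument: you maintain an extensibility invariant and prove safety of any valid choice via an exchange argument (your claim~(b)), whereas the paper dispenses with the exchange step entirely by observing that whenever $V^*=\emptyset$, the set $V'$ is a \emph{global} infeasibility certificate. Concretely, in any ordering $\sigma$ of $V$ whatsoever, the last vertex $u$ of $V'$ in $\sigma$ has $\delta^\ell(u)\ge \delta(u,V'\setminus\{u\})$ and $\varrho^r(u)\le \varrho(u,V\setminus V')$, so $u$ violates one of its bounds; thus no feasible order exists, irrespective of the algorithm's earlier choices. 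This makes your claim~(b) unnecessary, though your exchange argument is perfectly sound and arrives at the same conclusion.
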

\begin{proof}
  Consider the following modification of Algorithm~\ref{alg:UpperBounds}.
  In Line~\ref{alg:line:filter}, change the definition of $V^*$ to $V^* \coloneqq \{v\in V': \delta(v, V' \setminus \{v\}) \leq g_{\delta}(v)$ and $\varrho(v, V\setminus V') \geq f_{\varrho}(v)\}$, where $V'$ denotes the set of non-fixed vertices at any given iteration.
  The modified algorithm proceeds by fixing the vertices from right to left as follows.
  In each iteration, $V^*$ denotes the subset of $V'$ which satisfies both bounds.
  If $V^*$ is non-empty, then we pick an arbitrary vertex from $V^*$, place it at the last free position in the order, and remove it from $V'$.
  If $V^*$ is empty, then the algorithm concludes that no feasible order exists.
  We show that the modified algorithm correctly solves the problem.
  The proof follows a structure similar to the proof of Theorem~\ref{thm:upperBounds}.
  By the definition of $V^*$, any vertex fixed by the algorithm satisfies both the upper bound on the left-outdegree and the lower bound on the right-indegree.
  Thus, if $V^*$ is non-empty at each iteration of the algorithm, then a feasible order is found.
  Otherwise, $V^*$ is empty at some point in the algorithm.
  Let $\sigma$ be any arbitrary order of $V$, and let $u$ be the last vertex in $V'$ according to this order.
  Since $V^*$ is empty, it follows that $\delta^{\ell}(u) \geq \delta(u, V' \setminus \{u\})> g_{\delta}(u)$ or $\varrho^{r}(u) \leq \varrho(u, V \setminus V')< f_{\varrho}(u)$.
  This means that $u$ violates at least one of its degree bounds.
  Therefore, the order $\sigma$ is not feasible.
  Consequently, if $V^*$ is empty and there are still non-fixed vertices in $V'$, then no feasible order exists.
  Thus, the algorithm correctly finds a feasible vertex order or concludes that no such order exists, and the time complexity is polynomial.
\end{proof}

The next theorem can be proved analogously.
The only difference is that the algorithm fixes the vertices from left to right.
\begin{theorem}\label{thm:outLowerInUpper}
  For a digraph $D = (V, A)$ with a lower bound $f_{\delta}: V \to \Z_+\cup \{-\infty\}$ on the left-outdegrees  and an upper bound $g_{\varrho}: V \to \Z_+\cup \{+\infty\}$ on the right-indegrees, a vertex order can be found in polynomial time such that $\delta^{\ell}(v) \geq f_{\delta}(v)$ and $\varrho^{r}(v) \leq g_{\varrho}(v)$ hold for each $v\in V$.
  \FBOX
\end{theorem}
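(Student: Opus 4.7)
The plan is to adapt the greedy algorithm of Theorem~\ref{thm:outUpperInLower} so that vertices are fixed from left to right instead of from right to left. The direction is forced by the fact that the bounds have been swapped: once a vertex is placed at the current leftmost free position, its already-fixed predecessors form exactly its final set of left-neighbors, and the vertices still to be placed form exactly its final set of right-neighbors, so each bound can be checked against a known set at the moment of placement.

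Concretely, I would maintain the set $V' \subseteq V$ of unfixed vertices, initially $V' \coloneqq V$, and at each step compute
\[
V^* \coloneqq \{\, v \in V' : \delta(v, V \setminus V') \ge f_\delta(v) \text{ and } \varrho(v, V' \setminus \{v\}) \le g_\varrho(v)\,\}.
\]
If $V^* \neq \emptyset$, pick any vertex from $V^*$, append it to the partial order as the next vertex from the left, and remove it from $V'$; otherwise return that no feasible order exists.

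For correctness, one direction is immediate: any vertex $v$ placed by the algorithm satisfies both bounds, since at the moment $v$ is fixed, $V\setminus V'$ is its final set of left-neighbors and $V' \setminus \{v\}$ is its final set of right-neighbors, so $\delta^\ell(v) = \delta(v, V\setminus V') \ge f_\delta(v)$ and $\varrho^r(v) = \varrho(v, V'\setminus \{v\}) \le g_\varrho(v)$. For the converse, suppose $V^*$ becomes empty while $V'$ is non-empty, and let $\sigma$ be any ordering of $V$. Let $u$ be the \emph{first} element of $V'$ in $\sigma$; then all predecessors of $u$ in $\sigma$ belong to $V \setminus V'$ and all elements of $V'\setminus\{u\}$ succeed $u$, so $\delta^\ell(u) \le \delta(u, V \setminus V')$ and $\varrho^r(u) \ge \varrho(u, V' \setminus \{u\})$. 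Since $u \notin V^*$, one of these inequalities violates the corresponding bound, and $\sigma$ is infeasible.

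The only step requiring genuine care, and the main difference from the proof of Theorem~\ref{thm:outUpperInLower}, is choosing the correct witness of infeasibility. Because the bounds now point in the opposite direction (lower on $\delta^\ell$, upper on $\varrho^r$), the witness must be the \emph{first} unfixed vertex of $\sigma$ rather than the last, so that the two inequalities displayed above line up correctly with the failure of membership in $V^*$. Polynomiality is clear: there are at most $|V|$ iterations, and each one recomputes $V^*$ in time $O(|V|+|A|)$.
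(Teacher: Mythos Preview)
Your proposal is correct and matches the paper's intended argument exactly: the paper states that the proof is analogous to that of Theorem~\ref{thm:outUpperInLower}, with the only difference being that the algorithm fixes the vertices from left to right. Your identification of the first unfixed vertex in an arbitrary order $\sigma$ as the infeasibility witness is precisely the dual of the ``last vertex'' argument used there, and the inequalities are set up correctly.
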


Corollary~\ref{cor:LowerAndUpper} immediately implies the following, because the left-indegrees and the left-out\-de\-grees are the same for symmetric digraphs, and a lower bound for the left-indegrees is equivalent to an upper bound for the right-indegrees.
\begin{theorem}\label{thm:outUpperInUpper}
  For a digraph $D = (V, A)$ with upper bounds $g_{\delta}: V \to \Z_+\cup \{+\infty\}$ and $g_{\varrho}: V \to \Z_+\cup \{+\infty\}$ on the left-outdegrees and right-indegrees, respectively, it is NP-complete to decide whether a vertex order exists such that $\delta^{\ell}(v) \leq g_{\delta}(v)$ and $\varrho^{r}(v) \leq g_{\varrho}(v)$ hold for each~$v\in V$.
  \FBOX
\end{theorem}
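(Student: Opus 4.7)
The plan is to derive the hardness by a straightforward reduction from the $(f,g)$-bounded ordering problem on symmetric digraphs, which Corollary~\ref{cor:LowerAndUpper} already established to be NP-complete. Membership in NP is immediate, since a candidate ordering can be verified in polynomial time by computing $\delta^{\ell}(v)$ and $\varrho^{r}(v)$ for every $v\in V$ and checking the two upper bounds.

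For the reduction, I would start from the key structural observation that in a symmetric digraph every arc $uv$ comes paired with its reverse $vu$. Consequently, for any vertex ordering $\sigma$ and any $v\in V$, the arcs contributing to $\delta^{\ell}(v)$ are exactly the reverses of the arcs contributing to $\varrho^{\ell}(v)$, so $\delta^{\ell}(v)=\varrho^{\ell}(v)$. Combined with $\varrho^{\ell}(v)=\varrho(v)-\varrho^{r}(v)$, the lower bound $f(v)\le\delta^{\ell}(v)$ is equivalent to the upper bound
\[
\varrho^{r}(v)\le \varrho(v)-f(v),
\]
where we adopt the convention $\varrho(v)-(-\infty)=+\infty$ so that vertices without a lower bound inherit no right-indegree restriction.

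Given this, the reduction is to take the same symmetric digraph $D$ and set $g_{\delta}(v)\coloneqq g(v)$ together with $g_{\varrho}(v)\coloneqq \varrho(v)-f(v)$. The equivalence above yields that an ordering satisfies $f(v)\le\delta^{\ell}(v)\le g(v)$ for every $v$ if and only if it satisfies $\delta^{\ell}(v)\le g_{\delta}(v)$ and $\varrho^{r}(v)\le g_{\varrho}(v)$ for every $v$. Since the construction is clearly polynomial, NP-completeness transfers from Corollary~\ref{cor:LowerAndUpper} to the present problem. There is no substantive obstacle here; the only point deserving care is the bookkeeping convention for $f(v)=-\infty$, which is handled by setting $g_{\varrho}(v)=+\infty$ in that case.
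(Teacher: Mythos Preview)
Your proposal is correct and follows exactly the approach the paper uses: exploit that in a symmetric digraph $\delta^{\ell}(v)=\varrho^{\ell}(v)$, so the lower bound $f(v)\le\delta^{\ell}(v)$ from Corollary~\ref{cor:LowerAndUpper} becomes the upper bound $\varrho^{r}(v)\le\varrho(v)-f(v)$, reducing the NP-complete $(f,g)$-bounded ordering problem directly to the present one. The paper states this in a single sentence; your write-up simply unpacks the same translation in more detail.
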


The problem with upper bounds on the left-outdegrees and right-indegrees can be transformed into a problem with lower bounds on the left-outdegrees and right-indegrees by considering the reverse orders.
This implies that the latter problem is also NP-complete.
\begin{corollary}\label{cor:outLowerInLower}
  For a digraph $D = (V, A)$ with lower bounds $f_{\delta}: V \to \Z_+\cup \{-\infty\}$ and $f_{\varrho}: V \to \Z_+\cup \{-\infty\}$ on the left-outdegrees and right-indegrees, respectively, it is NP-complete to decide whether a vertex order exists such that $\delta^{\ell}(v) \geq f_{\delta}(v)$ and $\varrho^{r}(v) \geq f_{\varrho}(v)$ hold for each~$v\in V$.
  \FBOX
\end{corollary}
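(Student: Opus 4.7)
The plan is to verify membership in NP (trivially, any vertex ordering is a polynomial-size certificate that can be checked by computing left-outdegrees and right-indegrees directly) and then give a trivial Karp reduction from the upper-bound version in Theorem~\ref{thm:outUpperInUpper}. The reduction rests on a single observation: if $\sigma = (\sigma_1,\dots,\sigma_n)$ is an ordering of $V$ and $\sigma^{\text{rev}} = (\sigma_n,\dots,\sigma_1)$ is its reversal, then for every vertex $v \in V$ the quantities computed in the two orderings are swapped via complementation, namely
\[
  \delta^{\ell}(v)\big|_{\sigma^{\text{rev}}} = \delta^{r}(v)\big|_{\sigma} = \delta(v) - \delta^{\ell}(v)\big|_{\sigma},
\]
\[
  \varrho^{r}(v)\big|_{\sigma^{\text{rev}}} = \varrho^{\ell}(v)\big|_{\sigma} = \varrho(v) - \varrho^{r}(v)\big|_{\sigma}.
\]

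Given an instance $(D, g_\delta, g_\varrho)$ of the upper-bound problem of Theorem~\ref{thm:outUpperInUpper}, which we may assume satisfies $g_\delta(v)\le\delta(v)$ and $g_\varrho(v)\le\varrho(v)$ for each $v$ (otherwise replace the bound by the respective degree, without altering the set of feasible orderings), construct the lower-bound instance $(D, f_\delta, f_\varrho)$ on the same digraph by setting
\[
  f_\delta(v) = \delta(v) - g_\delta(v), \qquad f_\varrho(v) = \varrho(v) - g_\varrho(v),
\]
with the convention that a bound $+\infty$ in the upper-bound instance is mapped to $-\infty$ in the lower-bound instance. Using the identities above, the inequality $\delta^{\ell}(v) \le g_\delta(v)$ in $\sigma$ is exactly $\delta^{\ell}(v) \ge f_\delta(v)$ in $\sigma^{\text{rev}}$, and similarly for the indegree bounds. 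Hence $\sigma$ is feasible for $(D, g_\delta, g_\varrho)$ if and only if $\sigma^{\text{rev}}$ is feasible for $(D, f_\delta, f_\varrho)$, which gives the required polynomial-time equivalence of the two decision problems and finishes the reduction.

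There is no real obstacle here: the only point to articulate carefully is the treatment of infinite bounds and the assumption $g(v)\le\delta(v)$ (respectively $g(v)\le\varrho(v)$), which is what ensures that the constructed $f_\delta, f_\varrho$ take values in $\Z_+ \cup \{-\infty\}$ as required. Together with the proof of NP membership, this yields NP-completeness of the lower-bound problem, as claimed.
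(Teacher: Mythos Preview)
Your proof is correct and is precisely the approach the paper takes: the paper derives Corollary~\ref{cor:outLowerInLower} from Theorem~\ref{thm:outUpperInUpper} by ``considering the reverse orders,'' which is exactly your reduction via $\sigma\mapsto\sigma^{\text{rev}}$ and the complementation $f_\delta=\delta-g_\delta$, $f_\varrho=\varrho-g_\varrho$. Your explicit handling of the $\pm\infty$ case and the clipping $g_\delta(v)\le\delta(v)$, $g_\varrho(v)\le\varrho(v)$ to keep the new bounds in $\Z_+\cup\{-\infty\}$ is a nice touch that the paper leaves implicit.
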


Now, let us consider the case when both the left-outdegree and the right-indegree are prescribed exactly for each vertex.
Surprisingly, this strongly restrictive version of the problem turns out to be polynomial-time solvable, in contrast to the problems where either only the left-outdegrees or only the right-indegrees are prescribed.
\begin{theorem}\label{thm:outStrictInStrict}
  For a digraph $D = (V, A)$ with exact prescriptions $m_{\delta}: V \to \Z_+$ and $m_{\varrho}: V \to \Z_+$ for the left-outdegrees and right-indegrees, respectively, a vertex order can be found in polynomial time such that $\delta^{\ell}(v) = m_{\delta}(v)$ and $\varrho^{r}(v) = m_{\varrho}(v)$ hold for each $v\in V$.
\end{theorem}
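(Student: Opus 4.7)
The plan is to solve this by a greedy left-to-right algorithm, analogous to Algorithm~\ref{alg:UpperBounds} but now working directly with the exact prescriptions, and to justify its correctness through a swap argument.

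With $P$ denoting the set of already-placed vertices and $V' = V \setminus P$ the set of remaining vertices, I would call $v \in V'$ \emph{placeable} if
\[
  \delta(v, P) = m_{\delta}(v) \quad \text{and} \quad \varrho(v, V' \setminus \{v\}) = m_{\varrho}(v).
\]
These conditions are exactly what is required so that, upon appending $v$ as the next element of the order, the out-arcs from $v$ to $P$ become precisely its left-outgoing arcs (contributing $m_{\delta}(v)$) and the in-arcs to $v$ from $V' \setminus \{v\}$ become precisely its right-incoming arcs (contributing $m_{\varrho}(v)$). The algorithm repeatedly picks any placeable vertex, appends it, and updates the running quantities $m_{\delta}(v) - \delta(v, P)$ and $\varrho(v, V' \setminus \{v\}) - m_{\varrho}(v)$ for the still-unplaced vertices. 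If at any step no placeable vertex exists while $V' \neq \emptyset$, the algorithm reports infeasibility. The bookkeeping per iteration takes $O(|V| + |A|)$ time, so the running time is polynomial.

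The forward direction is immediate: any order returned satisfies both prescriptions, by the very definition of placeability. The hard part will be to show that a placeable vertex always exists as long as \emph{some} feasible order does. I would prove this by induction on $|P|$, maintaining the invariant that $P$ is a prefix of some feasible order $\tau = (\tau_1,\ldots,\tau_n)$. Given such a $\tau$, any placeable vertex $v$ occupies some position $|P|+j$ in $\tau$; write $B = \{\tau_{|P|+1},\ldots,\tau_{|P|+j-1}\}$ for the intermediate block. The key observation is that placeability forces the complete absence of arcs between $v$ and $B$: in $\tau$ the left-outdegree of $v$ is $\delta(v, P \cup B) = m_{\delta}(v)$, while placeability already gives $\delta(v, P) = m_{\delta}(v)$, so $\delta(v, B) = 0$; symmetrically, the right-indegree computation with $m_{\varrho}(v)$ forces $\varrho(v, B) = 0$. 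Consequently, sliding $v$ to position $|P|+1$ and shifting each element of $B$ one step to the right preserves the relative order of every arc-incident pair, so the new permutation $\tau'$ is still feasible and extends $(P, v)$. The induction step then goes through, establishing that the greedy never gets stuck when a feasible order exists. The main obstacle is precisely this swap lemma --- extracting from the two scalar equalities in the definition of placeability the much stronger statement that the intermediate block $B$ is arc-disconnected from $v$; once this is in hand, everything else reduces to bookkeeping.
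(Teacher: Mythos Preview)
Your approach is correct and genuinely different from the paper's. The paper does not run a greedy algorithm on the exact prescriptions at all: instead it relaxes to the problem with $\delta^{\ell}\le m_{\delta}$ and $\varrho^{r}\ge m_{\varrho}$, solves that via Theorem~\ref{thm:outUpperInLower}, and then observes that since both $\sum_v \delta^{\ell}(v)$ and $\sum_v \varrho^{r}(v)$ equal the number of left-going arcs, the global identity $\sum_v m_{\delta}(v)=\sum_v m_{\varrho}(v)$ forces every one of the relaxed inequalities to hold with equality. This is a short counting trick that piggybacks on the already-proved relaxed case.

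Your route is a self-contained exchange argument. One point worth making explicit in your write-up: the existence of \emph{some} placeable vertex is immediate once the invariant holds, since $\tau_{|P|+1}$ itself satisfies both equalities; the swap lemma is only needed so that the greedy may pick \emph{any} placeable vertex rather than the particular one sitting at position $|P|+1$ in $\tau$. With that clarification your induction is clean. The swap lemma itself is sound: the two equalities force $\delta(v,B)=0$ and $\varrho(v,B)=0$, and since the only pairs whose relative order changes under the slide are $(v,u)$ with $u\in B$, every vertex in $B\cup\{v\}$ keeps its left-outdegree and right-indegree intact. Compared to the paper, your argument is longer but avoids invoking Theorem~\ref{thm:outUpperInLower}; the paper's counting argument is slicker but leans on that earlier result.
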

\begin{proof}
  We prove that there exists a feasible order if and only if $\sum_{v\in V} m_{\delta}(v)= \sum_{v\in V}m_{\varrho}(v)$ and there exists a solution to the problem with bounds $\delta^{\ell} \leq m_{\delta}$ and $\varrho^{r} \geq m_{\varrho}$.
  This implies the theorem, since the latter problem can be solved in polynomial time by Theorem~\ref{thm:outUpperInLower}.

  To understand the necessity of the condition, observe that both $\sum_{v\in V} m_{\delta}(v)=\sum_{v\in V} \delta^{\ell}(v)$ and $\sum_{v\in V} m_{\varrho}(v)=\sum_{v\in V}\varrho^r(v)$ count the left-going arcs in the desired order. Furthermore, the upper bounds on the left-outdegrees and the lower bounds on the right-indegrees pose weaker constraints than the exact prescriptions.

  To prove the sufficiency, suppose that $\sum_{v\in V} m_{\delta}(v)= \sum_{v\in V}m_{\varrho}(v)$ holds and $\sigma$ is a solution to the problem with bounds $\delta^{\ell} \leq m_{\delta}$ and $\varrho^{r} \geq m_{\varrho}$.
  Then the following holds:
  \[
    \sum_{v\in V} \delta^{\ell}(v)\leq \sum_{v\in V} m_{\delta}(v)= \sum_{v\in V} m_{\varrho}(v)\leq \sum_{v\in V} \varrho^{r}(v).
  \]
  Since both the left- and the right-hand side equal the number of left-going arcs in $\sigma$, the inequalities hold with equality.
  Therefore, $\delta^{\ell}(v)=m_{\delta}(v)$ and $\varrho^{r}(v)=m_{\varrho}(v)$ for each $v\in V$.
  This means that $\sigma$ is a solution to the problem with prescribed left-outdegrees and right-indegrees.
\end{proof}

This result enables the efficient solution of certain ordering problems.
For example, consider the problem of finding an order in which the left-going arcs form an $r$\nobreakdash-in-arborescence and the right-going arcs form an $r$\nobreakdash-out-arborescence.
This corresponds to the degree-bounded ordering problem, where $\varrho^{\ell}(r)=\delta^{\ell}(r)=0$ and $\varrho^{\ell}(v)=\delta^{\ell}(v)=1$ for each vertex $v\in V\setminus \{r\}$.
Since the exact prescriptions for the left-indegrees can equivalently be expressed as exact prescriptions for the right-indegrees, Theorem~\ref{thm:outStrictInStrict} implies polynomial-time solvability.
\begin{corollary}\label{cor:orderingInArbOutArb}
  It can be decided in polynomial time whether the vertices of a digraph can be ordered such that the left-going arcs form an in-arborescence and the right-going arcs form an out-arborescence.
  \FBOX
\end{corollary}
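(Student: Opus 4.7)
The plan is to reduce the problem to $n$ polynomial-time instances of the prescribed-degree ordering problem of Theorem~\ref{thm:outStrictInStrict}, one for each candidate root vertex.

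First, I would show that both arborescences must be rooted at the leftmost vertex $\sigma_1$. Indeed, if the left-going arcs form an in-arborescence rooted at $r_1$, then from every other vertex there is a directed path of left-going arcs to $r_1$, forcing $r_1$ to lie to the left of all other vertices. Dually, if the right-going arcs form an out-arborescence rooted at $r_2$, then every vertex is reachable from $r_2$ along right-going arcs, placing $r_2$ to the left of everyone else. Hence $r_1 = r_2 = \sigma_1 =: r$.

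Next, I would translate the arborescence conditions into exact degree prescriptions for a fixed candidate $r$. In any vertex ordering the set of left-going arcs is automatically acyclic; thus if it further satisfies $\delta^{\ell}(r)=0$ and $\delta^{\ell}(v)=1$ for every $v\ne r$, it is a spanning acyclic digraph with a unique vertex $r$ of out-degree $0$ and all other vertices of out-degree $1$, which is precisely an $r$-in-arborescence. Symmetrically, $\varrho^{\ell}(r)=0$ and $\varrho^{\ell}(v)=1$ for $v\ne r$ characterize the right-going arcs forming an $r$-out-arborescence. Converting left-indegrees to right-indegrees via $\varrho^{r}(v) = \varrho(v) - \varrho^{\ell}(v)$, I arrive at the prescriptions
\[
  m_{\delta}(r) = 0, \quad m_{\delta}(v) = 1 \ (v\ne r), \quad m_{\varrho}(r) = \varrho(r), \quad m_{\varrho}(v) = \varrho(v)-1 \ (v\ne r).
\]

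Finally, I would invoke the polynomial-time algorithm of Theorem~\ref{thm:outStrictInStrict} with these prescriptions for each candidate $r \in V$, and answer \emph{yes} precisely when some call succeeds. The total running time stays polynomial since there are only $n$ candidates. The only conceptual step is the equivalence between the degree prescriptions and the arborescence structures; this is elementary once the automatic acyclicity of the left-going (resp.\ right-going) arcs in an ordering is recognized, and no new machinery beyond Theorem~\ref{thm:outStrictInStrict} is required.
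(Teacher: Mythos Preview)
Your proposal is correct and follows essentially the same route as the paper: translate the in-/out-arborescence conditions into the exact prescriptions $\delta^{\ell}(r)=\varrho^{\ell}(r)=0$ and $\delta^{\ell}(v)=\varrho^{\ell}(v)=1$ for $v\neq r$, convert the left-indegree prescriptions into right-indegree prescriptions, and invoke Theorem~\ref{thm:outStrictInStrict}. Your extra paragraph showing that both arborescences must share the leftmost vertex as their root, and your explicit loop over the $n$ candidate roots, make precise two points the paper leaves implicit, but the underlying argument is the same.
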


In contrast, the problem of deciding whether a digraph contains an $r$\nobreakdash-in-arborescence and an $r$\nobreakdash-out-arborescence that are arc-disjoint is NP-complete~\cite{bang1991edge}.
However, finding two arc-disjoint $r$\nobreakdash-out-arborescences, or $k$ arc-disjoint $r$\nobreakdash-out-arborescences in general, is polynomial-time solvable~\cite{edmonds1973edge}.
We investigate similar vertex-ordering and arc-partitioning problems in Section~\ref{sec:ArcPartition}.

\section{Ordering and partitioning problems}\label{sec:ArcPartition}
This section explores Problems~\ref{prob:vertexOrdering} and~\ref{prob:arcPartition} for several natural families of acyclic digraphs $\mathcal{F}$, namely, for in-branchings, in-arborescences, matchings, perfect matchings, unions of disjoint dipaths, dipaths, and Hamiltonian dipaths.

Notice that for any acyclic digraph family $\mathcal{F}$, a solution to the vertex-ordering problem naturally provides a solution to the corresponding arc-partitioning problem.
Specifically, one can partition the arcs into left-going and right-going arcs.
In the next two remarks, we demonstrate that in some cases, the reverse direction also holds, but not for all digraph families.

\begin{remark}\label{remark:orderEkviPartition}
If $\mathcal{F}$ is the digraph family of in-branchings, matchings, or unions of disjoint dipaths, then Problems~\ref{prob:vertexOrdering} and~\ref{prob:arcPartition} are essentially the same.
To see this, consider a solution to the arc-partitioning problem in which the in-branching, matching, or disjoint dipaths are inclusion-wise minimal.
In this case, these arcs form an inclusion-wise minimal feedback arc set.
Reversing all of its arcs results in an acyclic graph, whose topological order provides a solution to the vertex-ordering problem.
$\bullet$
\end{remark}

\begin{remark}\label{remark:orderDiffPartition}
  If $\mathcal{F}$ is the digraph family of in-arborescences, perfect matchings, or Hamiltonian dipaths, the vertex-ordering and arc-partitioning problems differ.
  To illustrate this, consider a digraph with two vertices and two parallel arcs.
  In this case, a solution to the arc-partitioning problem can be obtained by partitioning the digraph into two subgraphs, each containing one arc.
  However, for every possible vertex order, either both arcs go to the left or neither of them does, meaning the vertex-ordering problem has no solution.
  $\bullet$
\end{remark}

Similar arc-partitioning problems were discussed in~\cite{bang2022complexity}.
For example, they proved that it is NP-complete, with respect to Turing reduction, to decide whether a digraph can be partitioned into a directed cycle and an acyclic subgraph, or into a directed 2-factor and an acyclic subgraph.
Some of our arc-partitioning problems can be viewed as partitioning into two acyclic subgraphs, with some bounds imposed on the in- or outdegrees.
In~\cite{wood2004bounded}, the authors considered a similar degree-bounded acyclic decomposition problem, where the goal is to partition the digraph into $k$ acyclic subgraphs such that each outdegree is at most $\left\lceil \frac{\delta(v)}{k-1}\right\rceil$.
They proved that every simple digraph can be decomposed this way for any integer $k\ge 2$.

Next, we study the relationship between vertex-ordering problems and their natural arc-partitioning counterparts.
The complexity results are summarized in Table~\ref{tab:partitioningAndOrdering}.

\subsection{In-branchings and in-arborescences}
This section considers Problems~\ref{prob:vertexOrdering} and~\ref{prob:arcPartition} in the case when $\mathcal{F}$ is the family of in-branchings or in-ar\-bores\-cences.
As mentioned in Remark~\ref{remark:orderEkviPartition}, partitioning a digraph into an in-branching and an acyclic subgraph is essentially the same as finding a vertex order in which the left-going arcs form an in-branching.
However, Remark~\ref{remark:orderDiffPartition} shows that when considering the analogous partitioning and vertex-ordering problem for an in-arborescence, the two problems no longer coincide.

Note that partitioning a digraph into an in-branching and an acyclic subgraph can be rephrased as finding an in-branching that covers all directed cycles.
A similar problem was discussed in~\cite[p.~567]{frank2011connections}, where the goal is to cover all directed cuts instead of all directed cycles.
In that work, a necessary and sufficient condition is provided for the existence of an in-branching that covers all directed cuts.
We derive a characterization for the existence of an in-branching that covers all directed cycles, using the $(-\infty, g)$\nobreakdash-bounded ordering problem with $g \equiv 1$.
\begin{theorem}\label{thm:inBranchingPoly}
  It can be decided in polynomial time whether the arc set of a digraph can be partitioned into an in-branching and an acyclic subgraph.
  Such a partition exists if and only if there exists no induced subgraph in which the outdegree of each vertex is at least $2$.
\end{theorem}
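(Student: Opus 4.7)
The plan is to reduce this statement directly to the polynomial-time solvability and characterization already established for the $(-\infty,g)$-bounded ordering problem with $g\equiv 1$. First, I would invoke Remark~\ref{remark:orderEkviPartition}: since in-branchings form one of the families for which Problems~\ref{prob:vertexOrdering} and~\ref{prob:arcPartition} coincide, deciding whether $A$ can be partitioned into an in-branching and an acyclic subgraph is equivalent to deciding whether the vertices of $D$ admit an ordering in which the left-going arcs form an in-branching. An in-branching is precisely a subgraph in which every vertex has out-degree at most $1$, so this is exactly the $(-\infty,g)$-bounded ordering problem with $g\equiv 1$.

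Next, I would appeal to Theorem~\ref{thm:gChar} (equivalently, to Algorithm~\ref{alg:UpperBounds} and Theorem~\ref{thm:upperBounds}), which solves the $(-\infty,g)$-bounded ordering problem in polynomial time and yields the following characterization: a feasible ordering exists if and only if $D$ has no induced subgraph $D'=(V',A')$ in which $\delta(v, V'\setminus\{v\}) > g(v)$ for every $v\in V'$. Specializing to $g\equiv 1$ turns this condition into: there is no induced subgraph in which every vertex has outdegree at least $2$. Combining this with the equivalence from the first step gives both the polynomial-time solvability and the desired necessary-and-sufficient condition.

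The argument is essentially a specialization of results already developed, so no new combinatorial obstacle arises; the only point requiring care is the justification that a minimal arc-partitioning solution corresponds to an ordering (the content of Remark~\ref{remark:orderEkviPartition}). Concretely, if $B\subseteq A$ is an inclusion-wise minimal in-branching for which $A\setminus B$ is acyclic, then no arc of $B$ can close a directed cycle together with arcs of $A\setminus B$ alone, so topologically ordering $A\setminus B$ and placing the arcs of $B$ as left-going arcs gives an ordering in which each vertex has left-outdegree at most $1$. The converse direction is immediate. Once this equivalence is in hand, the theorem follows from Theorem~\ref{thm:gChar} with $g\equiv 1$.
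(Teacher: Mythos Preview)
Your proposal is correct and follows essentially the same route as the paper's proof: identify the arc-partitioning problem for in-branchings with the $(-\infty,g)$-bounded ordering problem for $g\equiv 1$ via Remark~\ref{remark:orderEkviPartition}, and then invoke Algorithm~\ref{alg:UpperBounds} and Theorem~\ref{thm:gChar}. One small slip in your side justification of Remark~\ref{remark:orderEkviPartition}: for an inclusion-wise minimal $B$, the correct statement is that \emph{every} arc of $B$ closes a directed cycle with arcs of $A\setminus B$ (this is precisely what forces each arc of $B$ to go left in any topological order of $A\setminus B$); alternatively, you can drop minimality altogether, since in any topological order of $A\setminus B$ the left-going arcs are a subset of $B$, and any subgraph of an in-branching is again an in-branching.
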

\begin{proof}
  The arc-partitioning problem is equivalent to the $(-\infty, g)$\nobreakdash-bounded ordering problem with upper bound $g\equiv 1$.
  Therefore, partitioning into an in-branching and an acyclic subgraph can be solved in polynomial time using Algorithm~\ref{alg:UpperBounds}, and the characterization follows directly from Theorem~\ref{thm:gChar}.
  \hfill\end{proof}

Moreover, the vertex-ordering version of the problem is also polynomial-time solvable, since the two problems coincide.
\begin{corollary}\label{cor:inBranchingOrder}
  It can be decided in polynomial time whether the vertices of a digraph can be ordered such that the left-going arcs form an in-branching.
  \FBOX
\end{corollary}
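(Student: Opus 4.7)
The plan is to reduce this directly to the $(-\infty,g)$-bounded ordering problem with $g\equiv 1$ and then invoke the polynomial-time algorithm already established. First, I would observe that in any vertex ordering, the left-going arcs automatically induce an acyclic digraph, since they all point from larger to smaller position. Therefore the left-going arcs form an in-branching (a disjoint union of in-arborescences) if and only if each vertex has at most one out-going left-arc, i.e.\ $\delta^{\ell}(v)\le 1$ for every $v\in V$.

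Next, I would note that this condition is precisely an instance of the $(-\infty,g)$-bounded ordering problem with $g\equiv 1$. Hence one can apply Algorithm~\ref{alg:UpperBounds} (whose correctness is Theorem~\ref{thm:upperBounds}), which runs in polynomial time, to decide the existence of such an ordering. Alternatively, one may simply invoke Theorem~\ref{thm:gChar} directly, which yields both the tractability and the characterization: a feasible ordering exists if and only if $D$ has no induced subdigraph in which every vertex has outdegree at least $2$.

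I would also remark that the equivalence between the vertex-ordering and arc-partitioning formulations, recorded in Remark~\ref{remark:orderEkviPartition}, gives an alternative derivation: Theorem~\ref{thm:inBranchingPoly} already settles the arc-partitioning version in polynomial time, and the corollary follows from the fact that, given a partition of $A$ into an inclusion-wise minimal in-branching and an acyclic subgraph, the topological order of the remaining acyclic subgraph realizes the desired vertex ordering. Since there is no technical obstacle beyond identifying the correct reduction, no step is genuinely difficult; the only thing to be careful about is verifying that an acyclic subdigraph in which every vertex has outdegree at most $1$ is indeed an in-branching, which is immediate from the standard characterization of in-forests.
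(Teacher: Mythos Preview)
Your proposal is correct and follows essentially the same approach as the paper: the paper also reduces the problem to the $(-\infty,g)$-bounded ordering problem with $g\equiv 1$ (this is done in the proof of Theorem~\ref{thm:inBranchingPoly}) and then derives the corollary from the equivalence of the ordering and partitioning formulations recorded in Remark~\ref{remark:orderEkviPartition}. Your direct argument and your alternative via Theorem~\ref{thm:inBranchingPoly} are precisely the two ingredients the paper uses.
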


Furthermore, a similar characterization holds when some vertices are required to be roots in the in-branching.
\begin{theorem}
  For a digraph $D = (V, A)$ and a subset $X \subseteq V$, it can be decided in polynomial time whether the arc set can be partitioned into an acyclic subgraph and an in-branching in which all vertices in $X$ are roots.
  Such a partition exists if and only if there does not exist an induced subgraph $D' = (V', A')$ of $D$ in which the outdegree of each vertex $v \in X$ is at least $1$ and the outdegree of each vertex $v \in V' \setminus X$ is at least $2$.
  \FBOX
\end{theorem}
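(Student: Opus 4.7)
The plan is to reduce the partitioning problem to the $(-\infty, g)$\nobreakdash-bounded ordering problem with a suitable choice of $g$, and then invoke Theorem~\ref{thm:gChar} directly.

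Concretely, I would define $g \colon V \to \Z_+$ by $g(v) = 0$ for $v \in X$ and $g(v) = 1$ for $v \in V \setminus X$, and claim that the desired partition exists if and only if the $(-\infty, g)$\nobreakdash-bounded ordering problem on $D$ has a feasible solution. For the ``only if'' direction, assume the claimed partition $(A_1, B)$ is given, with $B$ an in-branching rooted at the vertices of $X$ (and possibly additional roots) and $A_1$ acyclic. Take any topological order $\sigma$ of the acyclic digraph $(V, A_1)$. All arcs of $A_1$ go right in $\sigma$, so every left-going arc belongs to $B$. Since $B$ is an in-branching with $X \subseteq \mathrm{roots}(B)$, we have $\delta^{\ell}(v) \le \delta^B(v) = 0 = g(v)$ for $v \in X$ and $\delta^{\ell}(v) \le \delta^B(v) \le 1 = g(v)$ for $v \in V \setminus X$. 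Conversely, given a feasible ordering $\sigma$, the left-going arcs form an in-branching because every vertex has left-outdegree at most $1$ and because the left-going subgraph is acyclic (left-going arcs cannot form a directed cycle); moreover, every $v \in X$ has left-outdegree $0$ and is therefore a root. The right-going arcs are acyclic, completing the required partition.

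Once the equivalence is established, Theorem~\ref{thm:gChar} yields the polynomial-time algorithm and the characterization: a feasible order exists if and only if there is no induced subgraph $D' = (V', A')$ with $\delta(v, V' \setminus \{v\}) > g(v)$ for every $v \in V'$. Unpacking the definition of $g$, the condition $\delta(v, V'\setminus\{v\}) > g(v)$ becomes $\delta(v, V'\setminus\{v\}) \ge 1$ for $v \in V' \cap X$ and $\delta(v, V'\setminus\{v\}) \ge 2$ for $v \in V' \setminus X$, which is exactly the forbidden configuration stated in the theorem.

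There is no real obstacle here; the only subtlety is the converse of the equivalence --- showing that a partition can always be turned into an ordering --- but this is handled cleanly by taking a topological order of the acyclic part $A_1$, which ensures that the left-going arcs form a subgraph of the prescribed in-branching $B$ and therefore inherit the root constraint on $X$.
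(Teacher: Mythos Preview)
Your proposal is correct and takes essentially the same approach as the paper: the paper states this theorem without proof (it is marked with a \FBOX\ immediately after the statement), treating it as an immediate extension of Theorem~\ref{thm:inBranchingPoly}, whose proof is precisely the reduction to the $(-\infty,g)$-bounded ordering problem with $g\equiv 1$ followed by an appeal to Theorem~\ref{thm:gChar}. Your choice $g(v)=0$ for $v\in X$ and $g(v)=1$ otherwise is exactly the intended modification, and your verification of both directions of the equivalence (including the observation that the left-going arcs are automatically acyclic) fills in the details the paper leaves implicit.
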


It is important to note that the in-branching may contain roots other than the vertices in $X$.
Therefore, this theorem is not applicable for partitioning a digraph into an in-arborescence and an acyclic subgraph.
The complexity of this problem remains open.
However, partitioning a digraph into an in-arborescence and a \emph{spanning} acyclic subgraph is NP-complete as a corollary of the hardness proof for Problem~4.2.6 in~\cite{bang2022complexity}.

Note that partitioning into an in-arborescence and an acyclic subgraph is fundamentally different from finding an ordering of the vertices in which the left-going arcs form an in-arborescence.
If the root vertex of the in-arborescence is fixed, then the latter problem is NP-complete by Corollary~\ref{cor:strict}.
Observe that this even implies the hardness of the analogous problem for arbitrary root vertex.
The reduction goes by adding a new vertex with a single incoming arc from the root vertex.
\begin{corollary}\label{cor:inArbOrder}
  It is NP-complete to decide whether the vertices of a digraph can be ordered such that the left-going arcs form an in-arborescence.
  \FBOX
\end{corollary}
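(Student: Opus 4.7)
The plan is to prove Corollary~\ref{cor:inArbOrder} in three stages: verify NP-membership, identify a fixed-root variant that is already NP-complete via Corollary~\ref{cor:strict}, and then reduce the fixed-root variant to the arbitrary-root problem by a one-vertex gadget. Membership in NP is immediate: given an ordering, one checks in polynomial time that exactly one vertex has left-outdegree $0$ while every other vertex has left-outdegree exactly $1$, which together with the automatic acyclicity of the left-going subgraph characterizes an in-arborescence.

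The fixed-root variant --- decide whether a given digraph $D$ can be ordered so that the left-going arcs form an in-arborescence with a \emph{prescribed} root $r$ --- is precisely the $(f,g)$-bounded ordering problem with $f(r)=g(r)=0$ and $f(v)=g(v)=1$ for $v\neq r$, so it is NP-complete by Corollary~\ref{cor:strict}. Given such an instance $(D,r)$, I would form $D'$ by adjoining a new vertex $r'$ whose only incident arc is $r \to r'$. Two simple facts drive the reduction: (i) $r'$ has outdegree $0$ in $D'$, so $\delta^{\ell}(r')=0$ in every ordering, forcing $r'$ to be the root whenever the left-going arcs form an in-arborescence; and (ii) the first vertex of any ordering is a sink of the left-going subgraph and hence has left-outdegree $0$.

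From a fixed-root solution $\sigma$ for $(D,r)$, prepending $r'$ yields an ordering $\sigma'$ of $D'$ in which $r$'s left-outdegree rises from $0$ to $1$ through the arc $r\to r'$, the new vertex $r'$ has left-outdegree $0$, and all other left-outdegrees are unchanged, so the left-going arcs form an in-arborescence rooted at $r'$. Conversely, if $\sigma'$ is feasible for $D'$, fact (i) makes $r'$ the root, so every vertex of $V(D)$ has $\delta^{\ell}=1$ in $\sigma'$. If $r$ preceded $r'$, the arc $r\to r'$ would be right-going, and restricting to $\sigma \coloneqq \sigma'|_{V(D)}$ would preserve $\delta^{\ell}(r)=1$ along with every other $\delta^{\ell}(v)=1$, contradicting (ii) applied to $\sigma$. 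Hence $r'$ precedes $r$, removing $r'$ and its incident arc lowers $\delta^{\ell}(r)$ by exactly $1$, and the restriction $\sigma$ is a fixed-root solution for $(D,r)$. The only delicate point is ruling out the case $r$ before $r'$ in $\sigma'$, which observation (ii) dispatches cleanly; the rest is bookkeeping.
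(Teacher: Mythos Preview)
Your proof is correct and follows exactly the approach the paper sketches: the fixed-root variant is NP-complete by Corollary~\ref{cor:strict}, and the reduction to the arbitrary-root problem proceeds by adjoining a new vertex $r'$ with a single incoming arc from the prescribed root $r$. The paper states this reduction in one line without the verification details you supply, but the construction and the underlying idea are identical.
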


Theorem~\ref{thm:inBranchingPoly} characterizes when a digraph can be partitioned into an in-branching and an acyclic subgraph.
However, Theorem~\ref{thm:decMinBoundedOrder} states that it is APX-hard to minimize the size of the in-branching in such a partition (which is essentially the same as minimizing the size of an in-branching whose arcs go to the left in some order of the vertices), through an approximation-preserving reduction from the minimum feedback arc set problem.
It is known that this problem cannot be approximated within a factor of $10\sqrt{5} - 21 \approx 1.3606$, unless P = NP~\cite{dinur2005hardness, karp1972reducibility}, and cannot be approximated within a constant factor if the Unique Games Conjecture holds~\cite{guruswami2008beating}.
The proof of Theorem~\ref{thm:decMinBoundedOrder} implies that the same inapproximability results also apply to minimizing the size of an in-branching that covers all directed cycles.
\begin{corollary}\label{cor:minInBranching}
  In a digraph, it is APX-hard to find a minimum-size in-branching that covers all directed cycles.
  \FBOX
\end{corollary}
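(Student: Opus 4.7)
The plan is to simply invoke the reduction that already appears in the proof of Theorem~\ref{thm:decMinBoundedOrder}. That proof constructs, from any digraph $D' = (V', A')$, a digraph $D = (V, A)$ by subdividing every arc $a' \in A'$ with a new vertex $v_{a'}$, and it establishes a size-preserving bijection between feedback arc sets $F \subseteq A'$ of $D'$ and in-branchings $B \subseteq A$ of $D$ that cover all directed cycles of $D$: given $F$, the set of arcs leaving the vertices $\{v_{a'} : a' \in F\}$ forms such an in-branching of size $|F|$, and conversely the set $F$ of arcs $a' \in A'$ with $v_{a'}$ incident to some arc of $B$ is a feedback arc set of size at most $|B|$. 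So I would first observe that this correspondence is exactly an L-reduction (in fact, ratio-preserving) from minimum feedback arc set to minimum in-branching covering all directed cycles.

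Next, I would combine this with the known inapproximability results recalled just before the corollary: the minimum feedback arc set problem is APX-hard, cannot be approximated within $10\sqrt{5} - 21$ unless $\mathrm{P} = \mathrm{NP}$~\cite{dinur2005hardness, karp1972reducibility}, and admits no constant-factor approximation under the Unique Games Conjecture~\cite{guruswami2008beating}. Since the reduction preserves the objective value exactly, any $\alpha$-approximation for the in-branching problem would yield an $\alpha$-approximation for minimum feedback arc set, so all three hardness statements transfer verbatim.

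There is essentially no obstacle here beyond pointing to the construction already carried out for Theorem~\ref{thm:decMinBoundedOrder}; the only thing worth making explicit in the write-up is that the bijection is not merely a polynomial-time reduction but is value-preserving on feasible solutions, which is precisely what is needed for APX-hardness and for transferring the explicit inapproximability thresholds.
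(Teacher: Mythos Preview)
Your proposal is correct and follows exactly the paper's approach: the corollary is stated without a separate proof, relying on the approximation-preserving reduction from minimum feedback arc set already carried out in the proof of Theorem~\ref{thm:decMinBoundedOrder}, together with the inapproximability results cited just before the corollary. The one small imprecision is the word ``bijection'': the correspondence is not literally a bijection on feasible solutions (an in-branching of size $k$ yields a feedback arc set of size \emph{at most} $k$, as you yourself note), but this is exactly enough to preserve optima and approximation ratios, which is all that is needed.
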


Moreover, the APX-hardness of partitioning a digraph into a minimum-cost in-arborescence and an acyclic subgraph follows by an approximation-preserving reduction from partitioning into a minimum-size in-branching and an acyclic subgraph.
The main idea is to add a new vertex $s$ to the digraph with an arc coming in from all other vertices.
Let the cost function be $0$ on these new arcs and $1$ on the rest of the arcs.
This digraph contains an appropriate in-arborescence of cost at most $k$ if and only if the original digraph contains an appropriate in-branching of size at most $k$.
Thus, we obtain the following.
\begin{theorem}
  For a digraph with a $0$\nobreakdash-$1$ cost function on the arc set, it is APX-hard to find a minimum-cost in-arborescence that covers all directed cycles.
  \FBOX
\end{theorem}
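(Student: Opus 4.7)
The plan is to flesh out the approximation-preserving reduction sketched in the paragraph preceding the statement, using Corollary~\ref{cor:minInBranching} (APX-hardness of computing a minimum-size in-branching covering all directed cycles) as the source problem. Given an input digraph $D = (V, A)$, I would construct $D' = (V \cup \{s\}, A \cup \{vs : v \in V\})$ together with the cost function $c(a) = 0$ for each newly added arc $vs$ and $c(a) = 1$ for each $a \in A$. The key structural observation is that $s$ has outdegree $0$ in $D'$, so every directed cycle of $D'$ stays inside $V$ and is a directed cycle of $D$, and conversely every directed cycle of $D$ is a directed cycle of $D'$. In particular, the conditions ``covers every directed cycle of $D'$'' and ``covers every directed cycle of $D$'' coincide under the identification $T \leftrightarrow T \cap A$.

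Next, I would prove the cost-preserving correspondence in both directions. Starting from an in-branching $B \subseteq A$ of size $k$ covering every directed cycle of $D$, the set $T \coloneqq B \cup \{vs : v \text{ is a root of } B\}$ is a spanning $s$-rooted in-arborescence of $D'$ whose cost equals $|B| = k$, and it covers every directed cycle of $D'$ by the observation above. Conversely, given a spanning in-arborescence $T$ of $D'$ covering every directed cycle of $D'$, note that $s$ must be the root of $T$ because $s$ has outdegree $0$ in $D'$; hence $B \coloneqq T \cap A$ has outdegree at most $1$ at every vertex of $V$ and is acyclic, so $B$ is an in-branching of $D$ with $|B| = c(T)$ that covers every directed cycle of $D$.

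Because the construction and the back-map are computable in linear time and yield a bijection between feasible solutions that preserves objective values exactly, the transformation is an L-reduction with parameters $\alpha = \beta = 1$, which transfers the APX-hardness from Corollary~\ref{cor:minInBranching} to the in-arborescence setting. The one point that requires care --- and the closest thing to an obstacle --- is verifying that the correspondence is objective-preserving on \emph{every} feasible solution rather than only on optima; this is handled by assigning cost $0$ to the arcs entering $s$, so that the arcs of $T$ incident to $s$ can be added or dropped without affecting the cost or the feasibility of the corresponding in-branching of $D$.
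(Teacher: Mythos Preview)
Your proposal is correct and follows exactly the approximation-preserving reduction sketched in the paper: add a sink vertex $s$ with zero-cost arcs $vs$ from every $v\in V$, and exploit that the directed cycles of $D'$ coincide with those of $D$ so that in-arborescences of $D'$ rooted at $s$ correspond, cost-for-size, to in-branchings of $D$. Your write-up simply makes explicit the two directions of the objective-preserving correspondence and the L-reduction parameters, which the paper leaves implicit.
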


Note that approximating the minimum size of a matching that covers all directed cycles is not possible, because we will prove in Theorem~\ref{thm:matchingAndAcyclicNPC} that the decision version of this problem is NP-complete.
The same holds for finding a minimum-cost perfect matching that covers all directed cycles, as shown in Theorem~\ref{thm:perfectMatchingAndAcyclicNPC}.

\subsection{Matchings and perfect matchings}
This section investigates Problems~\ref{prob:vertexOrdering} and~\ref{prob:arcPartition} in the case when $\mathcal{F}$ is the family of matchings or perfect matchings (directed arbitrarily).
Similarly to the case of in-branchings, partitioning a digraph into a matching and an acyclic subgraph is essentially the same as finding an ordering of the vertices in which the left-going arcs form a matching, as shown in Remark~\ref{remark:orderEkviPartition}.
However, if we require the matching to be perfect, then the two problems no longer coincide, as stated in Remark~\ref{remark:orderDiffPartition}.

Motivated by the solvability of partitioning a digraph into an in-branching and an acyclic subgraph, it is natural to ask whether a similar result holds for matchings.
In the following, we show that this problem is NP-complete.

\begin{theorem}~\label{thm:matchingAndAcyclicNPC}
  It is NP-complete to decide whether the arc set of a digraph can be partitioned into a matching and an acyclic subgraph.
\end{theorem}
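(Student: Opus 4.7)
The plan is to first show NP membership---given a candidate partition $A = M \cup F$, we can verify in polynomial time that $M$ is a matching (no vertex is incident to two arcs of $M$) and that $F$ is acyclic, e.g., by a topological sort. By Remark~\ref{remark:orderEkviPartition}, the arc-partitioning formulation is equivalent to asking for a matching $M \subseteq A$ whose removal makes $D$ acyclic, i.e., a matching feedback arc set; this is the formulation I would work with in the hardness proof.

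For NP-hardness, I would reduce from 3-SAT (or a more structured variant such as the 3-XSAT-3 problem already used in the proof of Theorem~\ref{thm:Left1Right2}). Given variables $x_1, \ldots, x_n$ and clauses $c_1, \ldots, c_m$, the digraph $D$ is assembled from two kinds of gadgets. A variable gadget for $x_i$ is a small directed cycle---the cleanest candidate being a digon between two fresh vertices $u_i, \overline{u}_i$---whose two parallel arcs are in natural correspondence with the literals $x_i$ and $\overline{x}_i$; any feasible matching must include exactly one of them, and this choice is interpreted as the truth value of $x_i$. A clause gadget for $c_j = \ell_1 \vee \ell_2 \vee \ell_3$ is a directed triangle whose three arcs correspond to the literals of $c_j$; deleting any single arc leaves a dipath, so the matching must include at least one of them, and the surrounding construction will ensure that a clause arc can enter $M$ only when the corresponding literal is true.

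The two layers are connected by a propagation sub-gadget attached for each occurrence of $x_i$ in a clause, which creates a private copy of the variable's truth value at the appropriate literal arc of the clause triangle. Once the construction is in place, one direction shows that a satisfying assignment induces a matching whose removal is acyclic, and the other direction shows that any valid partition forces a consistent assignment satisfying every clause.

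The main obstacle is the matching constraint itself: since each vertex of $D$ may be incident to at most one arc of $M$, a variable $x_i$ appearing in many clauses cannot simply reuse the two gadget vertices as endpoints of many literal arcs. The propagation sub-gadget must therefore create several independent literal handles while (i) forwarding the chosen truth value consistently to every copy, (ii) introducing no new uncoverable directed cycle, and (iii) not exceeding the matching capacity at the variable-gadget vertices. Designing this sub-gadget so that the only non-trivial directed cycles of $D$ are the intended variable digons, clause triangles, and a small controlled family of coupling cycles is the crux of the proof; once this is achieved, both directions of the reduction are routine.
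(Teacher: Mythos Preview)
Your NP-membership argument is fine, and the high-level plan for the hardness direction is reasonable, but what you have written is a plan, not a proof. You explicitly identify the propagation sub-gadget as ``the crux of the proof'' and then do not construct it. Everything hinges on that gadget, and there are concrete obstacles that make the naive version of your plan fail. With the digon variable gadget, whichever of the two antiparallel arcs you put into $M$ already saturates \emph{both} vertices $u_i$ and $\overline{u}_i$; thus no further matching arc incident to either of them can be used to propagate the chosen value to the clause side. So any coupling between variables and clauses must be achieved through cycles that are broken or not broken by that single digon choice, and you have not exhibited such cycles. Similarly, in your clause triangle the three arcs pairwise share endpoints, so $M$ contains exactly one of them; you then need the propagation mechanism to make precisely the true-literal arcs available while blocking the false ones, again without exceeding the matching capacity at any shared vertex. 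Until you actually write down a gadget satisfying all three requirements you listed, you do not have a reduction, only a specification of what a reduction would have to do.

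For comparison, the paper reduces from NAE-3-SAT and does construct the gadgets explicitly. The clause gadget is not a triangle but a nine-vertex structure carrying two 6-cycles (an ``outer'' cycle through the literal vertices $c_k^j$ and an ``inner'' cycle through the negated-literal vertices $\overline{c}_k^j$) and three 4-cycles linking each $c_k^j$ to $\overline{c}_k^j$; the two 6-cycles are what force at least one true and at least one false literal per clause. The variable gadget is a 2-cycle attached to a 4-cycle $v_2^i v_3^i v_4^i v_5^i$, and the truth value is encoded by whether $v_3^i v_4^i$ or $v_4^i v_5^i$ lies in $M$, so that $v_3^i$, $v_4^i$, $v_5^i$ remain available for the extensions. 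Each literal occurrence adds two fresh private vertices $y_k^j, z_k^j$ and a dipath through them that creates exactly the controlled 4-cycles needed to force an $M$-arc onto the false literal vertex in the clause gadget. The verification that $M$ is a matching covering all directed cycles is then a somewhat lengthy but mechanical case analysis. The point is that the paper's gadgets are substantially more elaborate than a digon and a triangle precisely because the matching constraint forces this; your proposal stops before confronting that difficulty.
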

\begin{proof}
  The problem is clearly in NP.
  The proof is by reduction from the NAE-3-SAT problem, which is known to be NP-complete~\cite{porschen2014xsat}.
  In the NAE-3-SAT problem, the goal is to decide whether a CNF formula, where each clause contains exactly three literals, can be satisfied such that each clause has at least one false literal.

  Let us consider a CNF formula with variables $x_1, \ldots, x_n$ and clauses $c_1 = (c_1^1 \lor c_2^1 \lor c_3^1), \ldots, c_m = (c_1^m \lor c_2^m \lor c_3^m)$.
  We construct a digraph $D$ as follows: For each clause $c_j$, let $D$ contain a gadget $H_{c_j}$ on 9 vertices, denoted by $u_k^j, c_k^j, \overline{c}_k^j$ for $k = 1, 2, 3$.
  The vertex $c_k^j$ corresponds to the $k^\text{th}$ literal of the $j^\text{th}$ clause, and the vertex $\overline{c}_k^j$ corresponds to its negation.
  The gadget contains a directed cycle $c_1^j u_1^j c_2^j u_2^j c_3^j u_3^j$ and a directed cycle $\overline{c}_1^j u_3^j \overline{c}_3^j u_2^j \overline{c}_2^j u_1^j$.
  Figure~\ref{fig:ClauseGadget} illustrates the gadget $H_{c_j}$.
  Moreover, for each variable $x_i$, let $D$ contain a subgraph on the vertices $v_\ell^i$ for $\ell = 1, \ldots, 5$, with an arc $v_1^i v_2^i$, an arc $v_2^i v_1^i$, and a directed cycle $v_2^i v_3^i v_4^i v_5^i$.
  If the clause $c_j$ contains the literals $x_i$ or $\overline{x}_i$, then the subgraphs corresponding to $c_j$ and $x_i$ are connected as follows:
  \begin{enumerate}
  \item If $c_k^j = x_i$, then extend the gadget for $x_i$ with two vertices $y_k^j$ and $z_k^j$, and connect the vertices $c_k^j$ and $\overline{c}_k^j$ from the gadget for $c_j$ with a dipath $v_5^i \overline{c}_k^j z_k^j v_4^i y_k^j c_k^j v_3^i$.
  \item If $c_k^j = \overline{x}_i$, then extend the gadget for $x_i$ similarly, and connect the vertices $c_k^j$ and $\overline{c}_k^j$ from the gadget for $c_j$ with a dipath $v_5^i c_k^j z_k^j v_4^i y_k^j \overline{c}_k^j v_3^i$.
  \end{enumerate}

  Figure~\ref{fig:VariableGadget} illustrates the gadget for variable $x_i$, along with its possible extensions based on whether $c_k^j = x_i$ or $c_k^j = \overline{x}_i$.
  The extended gadget with its extensions is denoted by $H_{x_i}$.

  \begin{figure}[t]
    \centering

    \begin{tikzpicture}[scale=.95]
      \tikzset{VertexStyle/.append style = {minimum size = 20pt,inner sep=0pt}}
      \SetVertexNoLabel
      \grEmptyCycle[form=1,x=0,y=0,RA=2, rotation=30, prefix=k]{6}
      \AssignVertexLabel{k}{$c_1^j$, $u_1^j$, $c_2^j$, $u_2^j$, $c_3^j$, $u_3^j$}
      \grEmptyCycle[form=1,x=0,y=0,RA=1, rotation=30, prefix=b]{3}
      \AssignVertexLabel{b}{$\overline {c}_1^j$, $\overline{c}_2^j$, $\overline{c}_3^j$}

      \draw[\niceArrow] (k0)--(k1);
      \draw[\niceArrow] (k1)--(k2);
      \draw[\niceArrow] (k2)--(k3);
      \draw[\niceArrow] (k3)--(k4);
      \draw[\niceArrow] (k4)--(k5);
      \draw[\niceArrow] (k5)--(k0);

      \draw[\niceArrow] (b0)--(k5);
      \draw[\niceArrow] (k5)--(b2);
      \draw[\niceArrow] (b2)--(k3);
      \draw[\niceArrow] (k3)--(b1);
      \draw[\niceArrow] (b1)--(k1);
      \draw[\niceArrow] (k1)--(b0);

    \end{tikzpicture}
    \caption{The gadget for the clause $c_j$.}\label{fig:ClauseGadget}
    \vspace{5mm}

    \begin{tikzpicture}[xscale=1,yscale=1,scale=1.15]
      \tikzset{VertexStyle/.append style = {minimum size = 20pt,inner sep=0pt}}
      \SetVertexMath
      \Vertex[x=-4, y=2.25, L=v_1^i]{v1}
      \Vertex[x=-4, y=1, L=v_2^i]{v2}
      \Vertex[x=-5, y=0, L=v_3^i]{v3}
      \Vertex[x=-4, y=-1, L=v_4^i]{v4}
      \Vertex[x=-3, y=0, L=v_5^i]{v5}

      \draw[\niceArrow] (v2)--(v3);
      \draw[\niceArrow] (v3)--(v4);
      \draw[\niceArrow] (v4)--(v5);
      \draw[\niceArrow] (v5)--(v2);
      \draw[\niceArrow] (v2) to [out=75,in=-75,looseness=1] (v1);
      \draw[\niceArrow] (v1) to [out=-105,in=105,looseness=1] (v2);

      \Vertex[x=0, y=2.25, L=$ $]{x1}
      \Vertex[x=0, y=1, L=$ $]{x2}
      \Vertex[x=-1, y=0, L=$ $]{x3}
      \Vertex[x=0, y=-1, L=$ $]{x4}
      \Vertex[x=1, y=0, L=$ $]{x5}

      \Vertex[x=-2, y=-1, L=c_k^j]{c0}
      \Vertex[x=-1, y=-2, L=y_k^j]{c1}
      \Vertex[x=1, y=-2, L=z_k^j]{c2}
      \Vertex[x=2, y=-1, L=\overline{c}_k^j]{c3}

      \draw[\niceArrow] (x2)--(x3);
      \draw[\niceArrow] (x3)--(x4);
      \draw[\niceArrow] (x4)--(x5);
      \draw[\niceArrow] (x5)--(x2);
      \draw[\niceArrow] (x4)--(c1);
      \draw[\niceArrow] (c1)--(c0);
      \draw[\niceArrow] (c0)--(x3);
      \draw[\niceArrow] (x5)--(c3);
      \draw[\niceArrow] (c3)--(c2);
      \draw[\niceArrow] (c2)--(x4);

      \draw[\niceArrow] (x2) to [out=75,in=-75,looseness=1] (x1);
      \draw[\niceArrow] (x1) to [out=-105,in=105,looseness=1] (x2);

      \Vertex[x=5, y=2.25, L=$ $]{n1}
      \Vertex[x=5, y=1, L=$ $]{n2}
      \Vertex[x=4, y=0, L=$ $]{n3}
      \Vertex[x=5, y=-1, L=$ $]{n4}
      \Vertex[x=6, y=0, L=$ $]{n5}

      \Vertex[x=3, y=-1, L=\overline{c}_k^j]{nc0}
      \Vertex[x=4, y=-2, L=y_k^j]{nc1}
      \Vertex[x=6, y=-2, L=z_k^j]{nc2}
      \Vertex[x=7, y=-1, L=c_k^j]{nc3}

      \draw[\niceArrow] (n2) to [out=75,in=-75,looseness=1] (n1);
      \draw[\niceArrow] (n1) to [out=-105,in=105,looseness=1] (n2);
      \draw[\niceArrow] (n2)--(n3);
      \draw[\niceArrow] (n3)--(n4);
      \draw[\niceArrow] (n4)--(n5);
      \draw[\niceArrow] (n5)--(n2);
      \draw[\niceArrow] (n4)--(nc1);
      \draw[\niceArrow] (nc1)--(nc0);
      \draw[\niceArrow] (nc0)--(n3);
      \draw[\niceArrow] (n5)--(nc3);
      \draw[\niceArrow] (nc3)--(nc2);
      \draw[\niceArrow] (nc2)--(n4);

    \end{tikzpicture}
    \caption{The gadget for the variable $x_i$ and its extensions when $c_k^j = x_i$ or $c_k^j = \overline{x}_i$, from left to right.
      The gadget has an extension for each clause containing the literal $x_i$ or $\overline{x}_i$.
      Note that the vertices labeled $c_k^j$ and $\overline{c}_k^j$ are identical to those in Figure~\ref{fig:ClauseGadget}.}\label{fig:VariableGadget}
  \end{figure}

  We now prove that the NAE-3-SAT problem is solvable if and only if the digraph $D$ constructed above can be partitioned into a matching and an acyclic subgraph.

  First, suppose the NAE-3-SAT problem is solvable.
  Construct a matching $M$ that covers all directed cycles as follows:
  If $c_k^j$ is true, then add the arc $c_k^j u_k^j$ to $M$; if $c_k^j$ is false, then add the arc $u_k^j \overline{c}_k^j$ to $M$.
  In $H_{x_i}$, cover the 2-cycle between $v_1^i$ and $v_2^i$ with one of its arcs.

  \begin{enumerate}
  \item If $x_i$ is true, then add the arc $v_3^i v_4^i$ to $M$, and if $c_k^j = x_i$, then cover the arc $\overline{c}_k^j z_k^j$; if $c_k^j = \overline{x}_i$, then cover the arc $c_k^j z_k^j$.
  \item If $x_i$ is false, then add the arc $v_4^i v_5^i$ to $M$, and if $c_k^j = x_i$, then cover the arc $y_k^j c_k^j$; if $c_k^j = \overline{x}_i$, then cover the arc $y_k^j \overline{c}_k^j$.
  \end{enumerate}

  Now we show that $M$ covers all directed cycles in $D$.
  Consider the cycles spanned by the gadget $H_{c_j}$:
  The 4-cycles are covered either by the arc going out from $c_k^j$ or by the arc coming in to $\overline{c}_k^j$.
  The two 6-cycles are also covered because clause $c_j$ contains at least one true and at least one false literal.
  If the literal $c_k^j$ is true, then the arc going out from $c_k^j$ covers the outer cycle; if the literal $c_k^j$ is false, then the arc coming in to $\overline{c}_k^j$ covers the inner cycle.

  The cycles inside the gadget $H_{x_i}$ are also covered:
  The 2-cycle between $v_1^i$ and $v_2^i$ is covered by one of its arcs, and the 4-cycle $v_2^i v_3^i v_4^i v_5^i$ is covered either by the arc $v_3^i v_4^i$ (if $x_i$ is true) or by the arc $v_4^i v_5^i$ (if $x_i$ is false).
  For each clause $c_j$ where $c_k^j = x_i$, the 4-cycle $v_3^i v_4^i y_k^j c_k^j$ is covered by either $v_3^i v_4^i$ or $y_k^j c_k^j$, and the 4-cycle $v_4^i v_5^i \overline{c}_k^j z_k^j$ is covered by either $v_4^i v_5^i$ or $\overline{c}_k^j z_k^j$.
  Similarly, for each clause $c_j$ where $c_k^j = \overline{x}_i$, the 4-cycle $v_3^i v_4^i y_k^j \overline{c}_k^j$ is covered by either $v_3^i v_4^i$ or $y_k^j \overline{c}_k^j$, and the 4-cycle $v_4^i v_5^i c_k^j z_k^j$ is covered by either $v_4^i v_5^i$ or $c_k^j z_k^j$.

  The remaining cycles include arcs across different gadgets.
  Such a cycle must contain a dipath in the gadget $H_{x_i}$ for some variable $x_i$, between two vertices corresponding to literals from different clauses, i.e., an $s$\nobreakdash-$t$ dipath in $H_{x_i}$, where $s \in \{c_k^j, \overline{c}_k^j\}$ and $t \in \{c_{k'}^{j'}, \overline{c}_{k'}^{j'}\}$ with $j \neq j'$.

  We now prove that the matching $M$ covers these dipaths.
  First, consider the dipaths going out from the true literal among $c_k^j$ or $\overline{c}_k^j$ (which are literals of the variable $x_i$).
  Regardless of the value of $x_i$, either $c_k^j$ or $\overline{c}_k^j$ is true by definition:
  \begin{enumerate}
  \item If the variable $x_i$ is true, then in $H_{x_i}\setminus M$, the only arc going out from the true among $c_k^j$ or $\overline{c}_k^j$ is going to $v_3^i$, and the only arc going out from $v_3^i$ is the arc $v_3^i v_4^i$, which is covered by $M$.
  \item If the variable $x_i$ is false, then either the literal $c_k^j$ or $\overline{c}_k^j$ is true by definition.
    In $H_{x_i}\setminus M$, the only arc going out from the true among $c_k^j$ or $\overline{c}_k^j$ is going to $z_k^j$, and from $z_k^j$ the only arc is going to $v_4^i$.
    The arc $v_4^i v_5^i$ is covered by the matching, so from $v_4^i$ the only free arcs are going to $y_{k'}^{j'}$ for each clause $c_{j'}$ in which the literal $c_{k'}^{j'}$ is $x_i$ or $\overline{x}_i$.
    Finally, the arc going out from such a vertex $y_{k'}^{j'}$ is covered by $M$.
  \end{enumerate}

  Similarly, consider the dipaths going out from the false literal among $c_k^j$ or $\overline{c}_k^j$, which are both literals of the variable $x_i$.
  \begin{enumerate}
  \item If the variable $x_i$ is true, then in $H_{x_i} \setminus M$, the only arc going out from the false among $c_k^j$ or $\overline{c}_k^j$ is going to $z_k^j$ and this arc is covered by $M$.
  \item If the variable $x_i$ is false, then in $H_{x_i} \setminus M$, the only arc going out from the false among $c_k^j$ or $\overline{c}_k^j$ is going to $v_3^i$, and from $v_3$ the only arc goes to $v_4^i$.
    The arc $v_4^i v_5^i$ is covered by $M$, and from $v_4^i$, the only free arcs go to $y_{k'}^{j'}$ for each clause $c_{j'}$ in which the literal $c_{k'}^{j'}$ is $x_i$ or $\overline{x}_i$.
    Finally, the arc going out from such a vertex $y_{k'}^{j'}$ is covered by $M$.
  \end{enumerate}

  Thus, $M$ covers all directed cycles in $D$, so $D \setminus M$ is acyclic.

  Conversely, suppose there is a matching $M$ that covers all directed cycles in $D$.
  Notice that in $H_{x_i}$, the matching contains one arc between $v_1^i$ and $v_2^i$, and covers the cycle $v_2^i v_3^i v_4^i v_5^i$.
  Thus, either $v_3^i v_4^i \in M$ or $v_4^i v_5^i \in M$.
  If $v_3^i v_4^i \in M$, then set the variable $x_i$ to true; if $v_4^i v_5^i \in M$, set $x_i$ to false.
  We now show that this leads to a solution of the NAE-3-SAT problem.

  If $v_3^i v_4^i \in M$, then for each clause $c_j$ where $c_k^j = x_i$, the matching must include an arc from the cycle $v_4^i v_5^i \overline{c}_k^j z_k^j$, and this arc must be incident to $\overline{c}_k^j$.
  For each clause $c_j$ where $c_k^j = \overline{x}_i$, the matching must include an arc from the cycle $v_4^i v_5^i c_k^j z_k^j$, and this arc must be incident to $c_k^j$.
  Similarly, if  $v_4^iv_5^i\in M$, then for each clause $c_j$ where $c_k^j=x_i$, the matching must include an arc from the cycle $v_3^iv_4^iy_k^jc_k^j$, and this arc must be incident to $c_k^j$; and for each clause $c_j$ where $c_k^j=\overline{x}_i$ the matching must include an arc from the cycle $v_3^iv_4^iy_k^j\overline{c}_k^j$, and this arc must be incident to $\overline{c}_k^j$.
  In other words, for each clause $c_j$ for which the literal $c_k^j$ is $x_i$ or $\overline{x}_i$, there is an arc in $M\cap H_{x_i}$ incident to the false among $c_k^j$ and $\overline{c}_k^j$.

  Finally, consider the gadget $H_{c_j}$ for each clause $c_j$.
  Notice that $M$ contains at least one arc incident to $c_k^j$ or $\overline{c}_k^j$, because these arcs form a 4-cycle.
  This arc is incident to the true literal among $c_k^j$ or $\overline{c}_k^j$, since $M$ already contains an arc in $H_{x_i}$ incident to the other literal.
  Furthermore, $M$ contains at least one arc from the outer cycle $c_1^j u_1^j c_2^j u_2^j c_3^j u_3^j$ and at least one arc from the inner cycle $\overline{c}_1^j u_3^j \overline{c}_3^j u_2^j \overline{c}_2^j u_1^j$.
  This guarantees that, in each clause $c_j$, at least one literal is true and at least one literal is false.

  Therefore, a solution to the NAE-3-SAT problem can be found by setting $x_i$ to true when the arc $v_3^i v_4^i$ is part of the matching, and false otherwise.
\end{proof}

This, together with Remark~\ref{remark:orderEkviPartition}, implies that the vertex-ordering version of the problem is also NP-complete.
\begin{corollary}\label{cor:matchingAcyclic}
  It is NP-complete to decide whether the vertices of a digraph can be ordered such that the left-going arcs form a matching.
  \FBOX
\end{corollary}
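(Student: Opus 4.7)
The plan is to derive the corollary by combining Theorem~\ref{thm:matchingAndAcyclicNPC} with the observation, formalized in Remark~\ref{remark:orderEkviPartition}, that the vertex-ordering problem and the arc-partitioning problem coincide when $\mathcal{F}$ is the family of matchings. Membership in NP is immediate: given a candidate ordering $\sigma$, one inspects each arc of $D$, classifies it as left- or right-going, and checks in linear time that at most one left-going arc is incident (as head, and separately as tail) to each vertex.

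For hardness, I would establish a polynomial-time reduction from the partitioning problem of Theorem~\ref{thm:matchingAndAcyclicNPC}. The reduction is the identity on the digraph $D$: I claim that $A(D)$ admits a partition into a matching and an acyclic subgraph if and only if the vertices of $D$ admit an ordering whose left-going arcs form a matching. The forward implication is trivial: given such an ordering, the left-going arcs form a matching and the right-going arcs are acyclic.

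The reverse implication is the content of Remark~\ref{remark:orderEkviPartition}, which I would spell out as follows. Suppose $M\subseteq A(D)$ is a matching with $D\setminus M$ acyclic. Replace $M$ by an inclusion-wise minimal $M'\subseteq M$ such that $D\setminus M'$ is still acyclic; any subset of a matching is a matching, so $M'$ is still a matching, and it is by construction an inclusion-wise minimal feedback arc set. For every arc $a=(u,v)\in M'$, minimality yields a directed cycle in $(D\setminus M')\cup\{a\}$ through $a$, hence a directed $v$--$u$ path in $D\setminus M'$. Now fix any topological ordering $\sigma$ of $D\setminus M'$: every such arc $a=(u,v)$ forces $v$ to precede $u$ in $\sigma$, so $a$ is left-going in $\sigma$; conversely, every arc of $D\setminus M'$ goes right in $\sigma$ by definition of topological order. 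Therefore the set of left-going arcs in $\sigma$ is exactly $M'$, a matching.

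The main obstacle, if any, is the verification that a minimal matching feedback arc set is realized exactly by the left-going arcs of some topological ordering of its complement; but this is a standard consequence of the minimality of $M'$ and is carried out in the paragraph above. Since the reduction is polynomial and the decision problem of Theorem~\ref{thm:matchingAndAcyclicNPC} is NP-complete, so is the vertex-ordering version.
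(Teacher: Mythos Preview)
Your proposal is correct and takes essentially the same approach as the paper: invoke Theorem~\ref{thm:matchingAndAcyclicNPC} together with Remark~\ref{remark:orderEkviPartition}, spelling out that an inclusion-wise minimal matching feedback arc set is realized exactly as the left-going arcs of a topological order of its complement. One minor slip: you label the implication ``ordering $\Rightarrow$ partition'' as the \emph{forward} direction of your stated equivalence, when it is actually the backward one; the content of both directions is nonetheless correct.
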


By a simple modification of this construction, it follows that the problem of partitioning a digraph into a matching and an acyclic subgraph is also NP-complete when the matching is required to be perfect.
\begin{theorem}~\label{thm:perfectMatchingAndAcyclicNPC}
  It is NP-complete to decide whether the arc set of a digraph can be partitioned into a perfect matching and an acyclic subgraph.
\end{theorem}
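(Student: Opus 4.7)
The problem is clearly in NP. For the hardness, the plan is to augment the construction of Theorem~\ref{thm:matchingAndAcyclicNPC} with a tiny gadget that matches precisely the vertices left unmatched by the canonical matching of that proof. Recall that in the matching $M$ built there, the unmatched vertices are $v_5^i$ together with all $y_k^j$ over the extensions of $x_i$ when $x_i$ is true, and $v_3^i$ together with all $z_k^j$ when $x_i$ is false. Accordingly, I would introduce, for each variable $x_i$, a new vertex $u_i$ with arcs $v_5^i \to u_i$ and $u_i \to v_3^i$, and for each literal position $(j,k)$ belonging to variable $x_i$, a new vertex $w_k^j$ with arcs $y_k^j \to w_k^j$ and $w_k^j \to z_k^j$. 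Since each new vertex has a unique in-arc and a unique out-arc, and since $v_3^i$'s only out-arc in the original digraph is $v_3^i\to v_4^i$, $v_5^i$'s only in-arc is $v_4^i\to v_5^i$, $z_k^j$'s only out-arc is $z_k^j\to v_4^i$, and $y_k^j$'s only in-arc is $v_4^i\to y_k^j$, the only new simple directed cycles introduced are the 4-cycles $v_3^i v_4^i v_5^i u_i$ and $y_k^j w_k^j z_k^j v_4^i$.

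For the forward direction, I would take the matching $M$ from Theorem~\ref{thm:matchingAndAcyclicNPC} and extend it by adding $v_5^i u_i$ and every $y_k^j w_k^j$ when $x_i$ is true, and $u_i v_3^i$ and every $w_k^j z_k^j$ when $x_i$ is false. The proof of Theorem~\ref{thm:matchingAndAcyclicNPC} guarantees that the original endpoints of the new arcs were unmatched, so the extension remains a matching and is now perfect. The new 4-cycle through $u_i$ is already covered by $v_3^i v_4^i$ or $v_4^i v_5^i$, which belong to $M$, and the new 4-cycle through $w_k^j$ is covered by the freshly added arc incident to $w_k^j$.

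For the reverse direction, given a perfect matching $M$ with acyclic complement, the only arcs incident to $u_i$ are $v_5^i\to u_i$ and $u_i\to v_3^i$, so $u_i$ must be matched to exactly one of $v_3^i, v_5^i$. Combined with $v_1^i$ being forced to match $v_2^i$, this forces exactly one of $v_3^i v_4^i, v_4^i v_5^i$ into $M$; I would set $x_i$ true if $v_3^i v_4^i\in M$ and false otherwise. Propagating this choice through each extension by a case analysis on the polarity of $c_k^j$, the original 4-cycle $v_4^i v_5^i \overline{c}_k^j z_k^j$ (or $v_3^i v_4^i y_k^j c_k^j$) must be covered; since $v_3^i, v_4^i, v_5^i$ are all now matched, the covering arc must be incident to the false-literal vertex, exactly mirroring the argument in the proof of Theorem~\ref{thm:matchingAndAcyclicNPC}. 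Coverage of the outer and inner 6-cycles of each clause gadget then forces every clause to contain at least one true and at least one false literal, yielding a NAE-3-SAT assignment. The main obstacle is ruling out a cheating perfect matching that decouples the variable gadget from the extensions; this is handled by the one-in/one-out structure of the new vertices $u_i$ and $w_k^j$ and by the fact that $v_4^i$ is the unique vertex linking every extension of $x_i$ to its variable gadget.
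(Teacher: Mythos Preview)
Your proposal is correct and takes essentially the same approach as the paper. Your gadget $w_k^j$ with arcs $y_k^j\to w_k^j\to z_k^j$ is literally the paper's vertex $t_k^j$, and your vertex $u_i$ with arcs $v_5^i\to u_i\to v_3^i$ is the paper's vertex $s^i$ with the two arcs reversed (the paper uses $v_3^i\to s^i\to v_5^i$); this orientation difference is immaterial, since in either case the new degree-$2$ vertex must be matched to exactly one of $v_3^i,v_5^i$, and the single new cycle it creates is already hit by the arc $v_3^iv_4^i$ or $v_4^iv_5^i$ chosen in the matching of Theorem~\ref{thm:matchingAndAcyclicNPC}.
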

\begin{proof}
  The problem is clearly in NP.
  The proof is by reduction from the NAE-3-SAT problem, which is known to be NP-complete~\cite{porschen2014xsat}.
  We use the same digraph $D$ as in the proof of Theorem~\ref{thm:matchingAndAcyclicNPC}, with a small modification.
  The gadget $H_{c_j}$ corresponding to the clause $c_j$ remains the same as shown in Figure~\ref{fig:ClauseGadget}.
  For each variable $x_i$, we add a new vertex $s^i$ and a dipath $v_3^i s^i v_5^i$ to the gadget corresponding to $x_i$.
  Moreover, for each clause $c_j$ where the literal $c_k^j$ is $x_i$ or $\overline{x}_i$, we add a new vertex $t_k^j$ and a dipath $y_k^j t_k^j z_k^j$ to the gadget corresponding to $x_i$.
  Figure~\ref{fig:ModifiedVariableGadget} illustrates the modified gadget corresponding to the variable $x_i$, denoted by $H'_{x_i}$.
  Let the modified digraph, containing the gadget $H_{c_j}$ for each clause $c_j$ and the gadget $H'_{x_i}$ for each variable $x_i$, be denoted by $D'$.

  \begin{figure}[t]
    \centering
    \begin{tikzpicture}[xscale=1,yscale=1,scale=1.15]
      \tikzset{VertexStyle/.append style = {minimum size = 20pt,inner sep=0pt}}
      \SetVertexMath
      \Vertex[x=-4, y=2.25, L=v_1^i]{v1}
      \Vertex[x=-4, y=1, L=v_2^i]{v2}
      \Vertex[x=-5, y=0, L=v_3^i]{v3}
      \Vertex[x=-4, y=-1, L=v_4^i]{v4}
      \Vertex[x=-3, y=0, L=v_5^i]{v5}

      \draw[\niceArrow] (v2)--(v3);
      \draw[\niceArrow] (v3)--(v4);
      \draw[\niceArrow] (v4)--(v5);
      \draw[\niceArrow] (v5)--(v2);
      \draw[\niceArrow] (v2) to [out=75,in=-75,looseness=1] (v1);
      \draw[\niceArrow] (v1) to [out=-105,in=105,looseness=1] (v2);

      \Vertex[x=0, y=2.25, L=$ $]{x1}
      \Vertex[x=0, y=1, L=$ $]{x2}
      \Vertex[x=-1, y=0, L=$ $]{x3}
      \Vertex[x=0, y=-1, L=$ $]{x4}
      \Vertex[x=1, y=0, L=$ $]{x5}

      \Vertex[x=-2, y=-1, L=c_k^j]{c0}
      \Vertex[x=-1, y=-2, L=y_k^j]{c1}
      \Vertex[x=1, y=-2, L=z_k^j]{c2}
      \Vertex[x=2, y=-1, L=\overline{c}_k^j]{c3}

      \draw[\niceArrow] (x2)--(x3);
      \draw[\niceArrow] (x3)--(x4);
      \draw[\niceArrow] (x4)--(x5);
      \draw[\niceArrow] (x5)--(x2);
      \draw[\niceArrow] (x4)--(c1);
      \draw[\niceArrow] (c1)--(c0);
      \draw[\niceArrow] (c0)--(x3);
      \draw[\niceArrow] (x5)--(c3);
      \draw[\niceArrow] (c3)--(c2);
      \draw[\niceArrow] (c2)--(x4);

      \draw[\niceArrow] (x2) to [out=75,in=-75,looseness=1] (x1);
      \draw[\niceArrow] (x1) to [out=-105,in=105,looseness=1] (x2);

      \Vertex[x=5, y=2.25, L=$ $]{n1}
      \Vertex[x=5, y=1, L=$ $]{n2}
      \Vertex[x=4, y=0, L=$ $]{n3}
      \Vertex[x=5, y=-1, L=$ $]{n4}
      \Vertex[x=6, y=0, L=$ $]{n5}

      \Vertex[x=3, y=-1, L=\overline{c}_k^j]{nc0}
      \Vertex[x=4, y=-2, L=y_k^j]{nc1}
      \Vertex[x=6, y=-2, L=z_k^j]{nc2}
      \Vertex[x=7, y=-1, L=c_k^j]{nc3}

      \draw[\niceArrow] (n2) to [out=75,in=-75,looseness=1] (n1);
      \draw[\niceArrow] (n1) to [out=-105,in=105,looseness=1] (n2);
      \draw[\niceArrow] (n2)--(n3);
      \draw[\niceArrow] (n3)--(n4);
      \draw[\niceArrow] (n4)--(n5);
      \draw[\niceArrow] (n5)--(n2);
      \draw[\niceArrow] (n4)--(nc1);
      \draw[\niceArrow] (nc1)--(nc0);
      \draw[\niceArrow] (nc0)--(n3);
      \draw[\niceArrow] (n5)--(nc3);
      \draw[\niceArrow] (nc3)--(nc2);
      \draw[\niceArrow] (nc2)--(n4);

      \Vertex[x=-4, y=0, L=s^i]{s}
      \Vertex[x=0, y=0, L=$ $]{xs}
      \Vertex[x=0, y=-3, L=t_k^j]{xt}
      \Vertex[x=5, y=0, L=$ $]{ns}
      \Vertex[x=5, y=-3, L=t_k^j]{nt}

      \draw[\niceArrow] (v3)--(s);
      \draw[\niceArrow] (s)--(v5);
      \draw[\niceArrow] (x3)--(xs);
      \draw[\niceArrow] (xs)--(x5);
      \draw[\niceArrow] (c1)--(xt);
      \draw[\niceArrow] (xt)--(c2);
      \draw[\niceArrow] (n3)--(ns);
      \draw[\niceArrow] (ns)--(n5);
      \draw[\niceArrow] (nc1)--(nt);
      \draw[\niceArrow] (nt)--(nc2);

    \end{tikzpicture}
    \caption{The modified gadget belonging to the variable $x_i$ and its extensions if the literal $c_k^j$ is $x_i$ or $\overline{x}_i$, from left to right.
      The gadget has an extension for each clause containing the literals $x_i$ or $\overline{x}_i$.
      The vertices with labels $c_k^j$ and $\overline{c}_k^j$ are identical to those with the same label in Figure~\ref{fig:ClauseGadget}.}\label{fig:ModifiedVariableGadget}
  \end{figure}
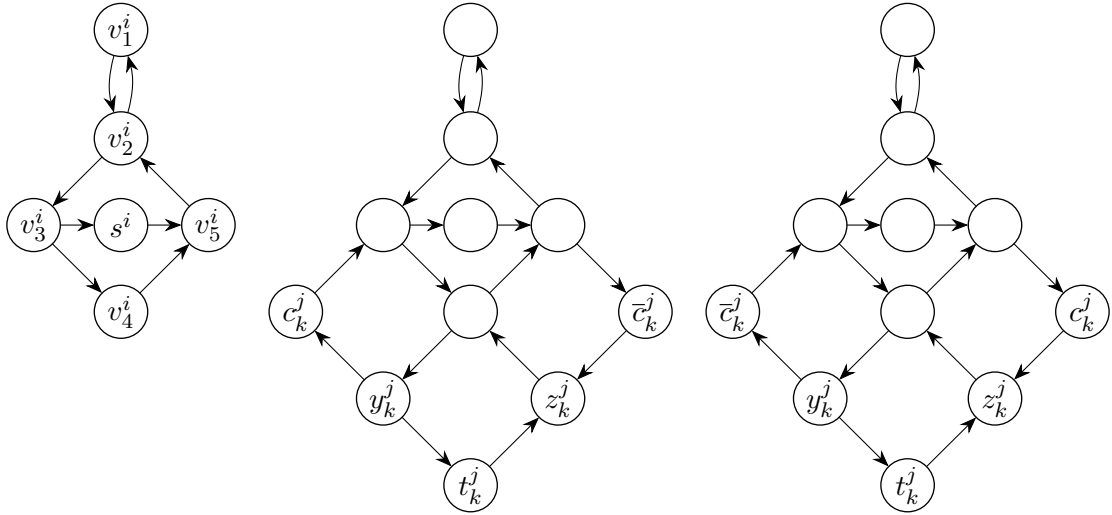
  Now, we show that $D'$ can be partitioned into a perfect matching and an acyclic subgraph if and only if the instance of the NAE-3-SAT problem is solvable.

  First, suppose there exists a perfect matching covering all directed cycles in $D'$.
  Then, this matching, when restricted to $D$, covers all directed cycles in $D$.
  Thus, we can construct a solution to the instance of the NAE-3-SAT problem as in the proof of Theorem~\ref{thm:matchingAndAcyclicNPC}.

  Second, suppose there exists a solution to the NAE-3-SAT problem.
  We construct the perfect matching $M$ covering all directed cycles in $D'$ as follows.
  Observe that the perfect matching uses the same arcs from $D$ as in the proof of Theorem~\ref{thm:matchingAndAcyclicNPC}, and we only add some new arcs incident to the newly added vertices $s^i$ and $t_k^j$.
  If the literal $c_k^j$ is true, then we add the arc $c_k^j u_k^j$ to $M$; if the literal $c_k^j$ is false, then we add the arc $u_k^j \overline{c}_k^j$ to $M$.
  In the gadget $H_{x_i}$, we cover the 2-cycle between $v_1^i$ and $v_2^i$ with one of its arcs.

  \begin{enumerate}
  \item If the variable $x_i$ is true, then we add the arcs $v_3^i v_4^i$, $s^i v_5^i$, and $y_k^j t_k^j$ to $M$.
    Furthermore, if $c_k^j = x_i$, then we cover the arc $\overline{c}_k^j z_k^j$; if $c_k^j = \overline{x}_i$, then we cover the arc $c_k^j z_k^j$.
  \item If the variable $x_i$ is false, then we add the arcs $v_4^i v_5^i$, $v_3^i s^i$, and $t_k^j z_k^j$ to $M$.
    Furthermore, if $c_k^j = x_i$, then we cover the arc $y_k^j c_k^j$; if $c_k^j = \overline{x}_i$, then we cover the arc $y_k^j \overline{c}_k^j$.
  \end{enumerate}

  It is easy to see that $M$ is a perfect matching.
  Now, we argue that $M$ covers all directed cycles in $D'$.
  Clearly, $M$ covers all directed cycles in $D$ because $M$, when restricted to $D$, is the same as the matching constructed in the proof of Theorem~\ref{thm:matchingAndAcyclicNPC}, which does cover all directed cycles in $D$.
  Every directed cycle in $D'$ that is not entirely contained in $D$ must involve at least one of the newly added dipaths $v_3^i s^i v_5^i$ or $y_k^j t_k^j z_k^j$.
  Note that $M$ covers one of the arcs in each of these dipaths, so it covers all directed cycles.
  This completes the proof.
\end{proof}

Recall that partitioning a digraph into a perfect matching and an acyclic subgraph is significantly different from deciding whether a vertex order exists such that the left-going arcs form a perfect matching, see Remark~\ref{remark:orderDiffPartition}.
Now, we prove that the latter problem is also NP-complete.
The proof relies on the construction given in the proof of Theorem~\ref{thm:perfectMatchingAndAcyclicNPC}.
\begin{theorem}~\label{thm:perfectMatchinOrderNPC}
  It is NP-complete to decide whether the vertices of a digraph can be ordered such that the left-going arcs form a perfect matching.
\end{theorem}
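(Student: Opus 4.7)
The problem is clearly in NP, so the work lies in hardness. My plan is to reduce from NAE-3-SAT using \emph{the very same} digraph $D'$ constructed in the proof of Theorem~\ref{thm:perfectMatchingAndAcyclicNPC}; no modification of the construction is needed, only a new analysis of the correspondence between a satisfying assignment and an actual vertex ordering.

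One direction is essentially free. If the vertices of $D'$ admit an ordering whose left-going arcs form a perfect matching $M$, then the right-going arcs form an acyclic subgraph, so $D'$ can be partitioned into a perfect matching and an acyclic subgraph. By Theorem~\ref{thm:perfectMatchingAndAcyclicNPC}, the underlying NAE-3-SAT instance is then satisfiable.

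For the converse, given a satisfying assignment I take $M$ to be precisely the perfect matching that the proof of Theorem~\ref{thm:perfectMatchingAndAcyclicNPC} associates with that assignment. The key claim is that the auxiliary digraph $D^{*} := (D' \setminus M) \cup M^{-1}$, obtained by reversing every arc of $M$, is acyclic. Any topological order of $D^{*}$ is then a valid order of $V(D')$: an arc $e \notin M$ goes right because it appears unchanged in $D^{*}$, while an arc $e \in M$ goes left because its reverse is in $D^{*}$. Hence the set of left-going arcs in this order is exactly $M$, a perfect matching.

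The technical obstacle is verifying acyclicity of $D^{*}$, and this is where the localized structure of the gadgets from Theorem~\ref{thm:perfectMatchingAndAcyclicNPC} is crucial. The argument splits naturally into (i) cycles contained in a single clause gadget $H_{c_j}$, (ii) cycles contained in a single modified variable gadget $H'_{x_i}$, and (iii) cycles that traverse multiple gadgets. For (i) and (ii) one writes down explicitly, for each truth value of $x_i$, the set $M \cap H'_{x_i}$ (one arc of the $v_1^i v_2^i$ digon, plus $\{v_3^iv_4^i,s^iv_5^i\}$ or $\{v_4^iv_5^i,v_3^is^i\}$, plus the appropriate arcs through $y_k^j, t_k^j, z_k^j$), reverses them, and checks by inspection that the resulting local digraph has a topological order; a similar case check, driven by which literal in the pair $c_k^j,\overline{c}_k^j$ is true, handles $H_{c_j}$. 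For (iii), any cross-gadget cycle must enter and leave a variable gadget $H'_{x_i}$ along one of the dipaths that links it to some clause gadget; examining the two possible orientations of that dipath after reversal of its matching arc shows that no such traversal can be completed in $D^{*}$, because the reversed matching arc breaks the dipath in the direction required to close the cycle. This case analysis is essentially the same bookkeeping already carried out in Theorem~\ref{thm:perfectMatchingAndAcyclicNPC} to show $M$ covers all directed cycles of $D'$; the extra ingredient here is simply that reversing $M$ does not introduce a new cycle, which follows from the fact that in the matching constructed there, the head and tail of each matching arc are separated in $D'\setminus M$ by a (short) dipath that goes in the \emph{same} direction as the matching arc, so reversing a single matching arc cannot create a back-edge. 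Once acyclicity of $D^{*}$ is established, the reduction is complete. \qed
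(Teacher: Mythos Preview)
Your overall strategy is sound and close to the paper's: reuse $D'$ and the matching $M$ from Theorem~\ref{thm:perfectMatchingAndAcyclicNPC}, and argue that some vertex order makes exactly the arcs of $M$ left-going. You phrase this as showing that $D^{*}=(D'\setminus M)\cup M^{-1}$ is acyclic; the paper instead proves the (slightly stronger) statement that $M$ is an \emph{inclusion-wise minimal} feedback arc set, by exhibiting for every $a\in M$ a short cycle $C_a$ of $D'$ with $C_a\cap M=\{a\}$. From this it follows at once that in any topological order of $D'\setminus M$ each $a=uv\in M$ goes left, since $C_a\setminus\{a\}$ is a $v$--$u$ dipath in $D'\setminus M$. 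This ``one private cycle per matching arc'' check is lighter than your proposed direct case analysis on cycles of $D^{*}$, which must also rule out cycles that use several reversed matching arcs simultaneously.

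Your closing heuristic, however, has the key direction reversed and would fail as written. You claim that the head and tail of each matching arc are joined in $D'\setminus M$ by a dipath ``in the \emph{same} direction as the matching arc'', and conclude that reversing cannot create a back-edge. But if $a=uv\in M$ and $P$ is a $u$--$v$ dipath in $D'\setminus M$, then $P$ together with the reversed arc $vu$ \emph{is} a directed cycle in $D^{*}$ --- precisely what you are trying to exclude. What is actually true (and what the paper checks arc by arc) is a dipath in the \emph{opposite} direction: $C_a\setminus\{a\}$ is a $v$--$u$ dipath in $D'\setminus M$, and this is what forces $a$ to be left-going in the topological order. With that correction --- or by simply invoking the minimal-feedback-arc-set argument --- your proof goes through.
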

\begin{proof}
  The problem is clearly in NP.
  The proof is by reduction from the NAE-3-SAT problem, which is known to be NP-complete~\cite{porschen2014xsat}.
  We use the same digraph $D'$ as in the proof of Theorem~\ref{thm:perfectMatchingAndAcyclicNPC}.
  Figures~\ref{fig:ClauseGadget} and \ref{fig:ModifiedVariableGadget} illustrate the gadgets $H_{c_j}$ corresponding to the clause $c_j$ and $H'_{x_i}$ corresponding to the variable $x_i$, respectively.

  We now show that there exists an order of the vertices of $D'$ in which the left-going arcs form a perfect matching if and only if the instance of the NAE-3-SAT problem is solvable.

  First, suppose there exists such a vertex order.
  Clearly, we can partition $D'$ into a perfect matching and an acyclic subgraph by partitioning into the left-going arcs and the right-going arcs.
  Therefore, from the proof of Theorem~\ref{thm:perfectMatchingAndAcyclicNPC}, it follows that the instance of the NAE-3-SAT problem is solvable.

  Second, suppose the instance of the NAE-3-SAT problem is solvable.
  Consider the perfect matching $M$ constructed in the proof of Theorem~\ref{thm:perfectMatchingAndAcyclicNPC}.
  We show that there exists a vertex order in which exactly the arcs in $M$ are left-going, by proving that $M$ is an inclusion-wise minimal feedback arc set.
  For each arc $a$ in $M$, we provide a cycle in $D'$ that is covered only by $a$.

  In the gadget $H_{c_j}$, $M$ contains exactly one arc from each 4-cycle, which is either the arc going out from $c_k^j$ or the arc coming in to $\overline{c}_k^j$.
  In the gadget $H_{x_i}$, $M$ contains exactly one of the arcs in the 2-cycle between $v_1^i$ and $v_2^i$.
  \begin{enumerate}
  \item If the variable $x_i$ is true, then $v_3^i v_4^i$ is the only arc in $M$ from the cycle $v_2^i v_3^i v_4^i v_5^i$, $s^iv_5^i$ is the only arc in $M$ from the cycle $v_2^i v_3^i s^i v_5^i$, and $y_k^jt_k^j$ is the only arc in $M$ from the cycle $v_4^i y_k^j t_k^j z_k^j$.
    Furthermore, if the literal $c_k^j = x_i$, then $\overline{c}_k^j z_k^j$ is the only arc in $M$ from the cycle $v_4^i v_5^i \overline{c}_k^j z_k^j$; if the literal $c_k^j = \overline{x}_i$, then $c_k^j z_k^j$ is the only arc in $M$ from the cycle $v_4^i v_5^i c_k^j z_k^j$.
  \item Otherwise, if the variable $x_i$ is false, then $v_4^i v_5^i$ is the only arc in $M$ from the cycle $v_2^i v_3^i v_4^i v_5^i$, $v_3^is^i$ is the only arc in $M$ from the cycle $v_2^i v_3^i s^i v_5^i$, and $t_k^jz_k^j$ is the only arc in $M$ from the cycle on $v_4^i y_k^j t_k^j z_k^j$.
    Furthermore, if the literal $c_k^j = x_i$, then $y_k^j c_k^j$ is the only arc in $M$ from the cycle $v_3^i v_4^i y_k^j c_k^j$; if the literal $c_k^j = \overline{x}_i$, then $y_k^j \overline{c}_k^j$ is the only arc in $M$ from the cycle $v_3^i v_4^i y_k^j \overline{c}_k^j$.
  \end{enumerate}
  This proves that $M$ is an inclusion-wise minimal feedback arc set.
  Therefore, in the topological order of $D' \setminus M$, exactly the arcs of $M$ are going to the left.
\end{proof}

\subsection{Hamiltonian and disjoint dipaths}
This section explores Problems~\ref{prob:vertexOrdering} and~\ref{prob:arcPartition} in the case when $\mathcal{F}$ is the family of unions of disjoint dipaths, dipaths, or Hamiltonian dipaths.
In the case of disjoint dipaths, the arc-partitioning and vertex-ordering problems are equivalent, similar to the situations involving in-branchings and matchings, see Remark~\ref{remark:orderEkviPartition}.
However, for the Hamiltonian dipath case, the two problems no longer coincide, much like the case of in-arborescences and perfect matchings, see Remark~\ref{remark:orderDiffPartition}.

We begin with the problem of partitioning a digraph into disjoint dipaths and an acyclic subgraph.
\begin{theorem}\label{thm:disjPathAcyclic}
  It is NP-complete to decide whether the arc set of a digraph can be partitioned into disjoint dipaths and an acyclic subgraph.
\end{theorem}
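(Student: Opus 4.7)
The plan is to prove NP-completeness directly, following the style of the analogous arc-partitioning results in this section (cf.\ Theorems~\ref{thm:matchingAndAcyclicNPC} and~\ref{thm:perfectMatchingAndAcyclicNPC}). Membership in NP is immediate: given a candidate $F \subseteq A$, one verifies in polynomial time that every vertex has in-degree and out-degree at most $1$ in $F$, that $F$ is acyclic, and that $A \setminus F$ is acyclic.

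For NP-hardness I would reduce from NAE-3-SAT. Given a CNF formula $\Phi$ with variables $x_1,\dots,x_n$ and clauses $c_1,\dots,c_m$, I would build a digraph $D_\Phi$ from three kinds of gadgets. A \emph{variable gadget} $H_{x_i}$ is a short directed cycle forcing exactly one distinguished arc into $F$, the choice encoding the truth value of $x_i$. A \emph{clause gadget} $H_{c_j}$ is a pair of interlocking directed cycles on the three literal vertices of $c_j$, designed so that $A \setminus F$ is acyclic inside $H_{c_j}$ if and only if the clause has at least one true and at least one false literal. Finally, \emph{connector dipaths} tie each literal occurrence in a clause to the state of its variable gadget, so that an occurrence of $x_i$ (resp.\ $\overline{x}_i$) inherits exactly the prescribed truth value. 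The forward direction of the correctness proof constructs $F$ explicitly from a satisfying NAE-assignment and checks by inspection that it is a union of vertex-disjoint dipaths with acyclic complement; the backward direction reads truth values off the variable gadgets and uses the acyclicity of $A \setminus F$ inside each clause gadget to confirm the NAE property.

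The main technical obstacle, absent in the matching and perfect-matching cases, is that the disjoint-dipaths constraint only caps the in- and out-degrees at $1$ in $F$, so an arc $u \to v$ already in $F$ may be extended by a further arc $v \to w$ to form a dipath in $F$ snaking across several gadgets. Such unintended extensions could collapse otherwise-forbidden configurations and yield spurious ``yes'' instances. To block them, the connectors would be designed using either pairs of parallel arcs at each junction (so that one parallel arc is forced into each side of the partition, cutting any would-be extension) or subdivision vertices whose unique incoming and outgoing arc forces a rigid in/out behavior in $F$; variants of both tricks already appear in the constructions of the earlier hardness proofs in Section~\ref{sec:ArcPartition}.

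As an alternative route, one might hope to obtain the theorem from Theorem~\ref{thm:outUpperInUpper} via Remark~\ref{remark:orderEkviPartition}, since $F$ being a union of vertex-disjoint dipaths corresponds to the bounds $g_{\delta} \equiv g_{\varrho} \equiv 1$. However, Theorem~\ref{thm:outUpperInUpper} is derived from the symmetric-digraph case (Corollary~\ref{cor:LowerAndUpper}), and for symmetric digraphs these specific bounds become infeasible at every vertex of degree exceeding $2$, so such a shortcut does not produce hardness in the regime we need, and a direct reduction along the lines above is genuinely required.
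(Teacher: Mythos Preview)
Your proposal is only a sketch, not a proof: you name the intended gadgets and the main obstacle (arcs of $F$ chaining into long dipaths across gadgets), and you suggest two possible fixes (parallel arcs, subdivision vertices), but you do not actually build the gadgets, specify the connectors, or verify either direction of correctness. As written, the forward direction is asserted ``by inspection'' of a construction that is never given, and the backward direction hinges on the blocking mechanism doing exactly what you hope, which is precisely the delicate part. So there is a genuine gap: the reduction is not carried out.

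More importantly, you are working much harder than necessary. The paper does not redo a NAE-3-SAT reduction from scratch; instead it reduces from Theorem~\ref{thm:matchingAndAcyclicNPC} (partitioning into a matching and an acyclic subgraph), which is already established. Given $D=(V,A)$, form $D'$ by adding, for every $v\in V$, a fresh vertex $v'$ together with the $2$-cycle $vv'$, $v'v$. Any union $P$ of disjoint dipaths covering all directed cycles of $D'$ must contain exactly one of the two arcs between $v$ and $v'$; since $P$ has in- and out-degree at most $1$ everywhere, this leaves room for at most one further arc of $A$ incident to $v$ in $P$. Hence $P\cap A$ is a matching covering all directed cycles of $D$. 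Conversely, any such matching $M$ in $D$ extends to disjoint dipaths in $D'$ by appending the arc $vv'$ if $v$ has an incoming $M$-arc and $v'v$ otherwise. This two-line trick accomplishes exactly what your parallel-arc/subdivision idea was aiming for --- it pins down one ``slot'' at every vertex and thereby forces the residual structure on $A$ to be a matching --- but it does so globally and uniformly, with no gadget engineering at all.

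Your closing remark about why Theorem~\ref{thm:outUpperInUpper} does not give a shortcut is correct and worth keeping.
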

\begin{proof}
  We prove this result by reduction from the problem of partitioning a digraph into a matching and an acyclic subgraph, which is NP-complete by Theorem~\ref{thm:matchingAndAcyclicNPC}.
  Let $D = (V, A)$ be a digraph for which we want to decide whether it can be partitioned into a matching and an acyclic subgraph.
  We construct a new digraph $D'$ as follows:
  Let $D'$ initially be the same as $D$.
  For each vertex $v \in V$, introduce a copy of $v$, denoted by $v'$, and add a directed cycle of length two between $v$ and $v'$ (i.e., add the directed arcs $v v'$ and $v' v$).
  We now show that $D$ can be partitioned into a matching and an acyclic subgraph if and only if $D'$ can be partitioned into disjoint dipaths and an acyclic subgraph.

  First, suppose $M$ is a matching in $D$ such that $D \setminus M$ is acyclic.
  We construct a union of disjoint dipaths $P$ in $D'$ as follows.
  We first include the arcs of $M$ in $P$.
  For each vertex $v \in V$, if $v$ has an incoming arc in $M$, add the directed arc $v v'$ to the dipaths; otherwise, add the directed arc $v' v$.
  It is easy to see that the set $P$ of resulting arcs forms a union of disjoint dipaths, and these dipaths cover all directed cycles in $D'$.
  This is because $M$ covers all directed cycles in the original digraph $D$, and the length-two cycles between each $v$ and $v'$ in $D'$ are covered by the arcs in $P$, ensuring that no directed cycle remains uncovered.

  Second, suppose $P$ is a union of disjoint dipaths in $D'$ such that $D' \setminus P$ is acyclic.
  We now show that the restriction of $P$ to the original digraph $D$ forms a matching $M$ that covers all directed cycles in $D$.
  It is easy to verify that $M$ covers all directed cycles in $D$ because it is the restriction of $P$ to $D$ and $P$ covers all directed cycles in $D'$.
  Moreover, for each vertex $v \in V$, the union of disjoint dipaths $P$ must include either the arc $v v'$ or the arc $v' v$, covering the length-two cycle between $v$ and $v'$.
  Since $P$ consists of disjoint dipaths, it can only contain one other arc incident to $v$ from the original arc set, ensuring that the restriction of $P$ forms a matching in $D$.
  Thus, the restriction of $P$ to $D$ is a matching that covers all directed cycles in $D$, completing the proof.
\end{proof}

This implies that the vertex-ordering version of the problem is also NP-complete since the ordering problem for disjoint dipaths is essentially equivalent to the corresponding partitioning problem by Remark~\ref{remark:orderEkviPartition}.
\begin{corollary}\label{cor:disjPathOrder}
  It is NP-complete to decide whether the vertices of a digraph can be ordered such that the left-going arcs form disjoint dipaths.
  \FBOX
\end{corollary}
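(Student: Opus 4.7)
The plan is to derive Corollary~\ref{cor:disjPathOrder} directly from Theorem~\ref{thm:disjPathAcyclic} and the equivalence sketched in Remark~\ref{remark:orderEkviPartition}. Membership in NP is immediate: given an ordering $\sigma$, one can check in polynomial time that the set of left-going arcs is a union of vertex-disjoint dipaths (each vertex has at most one left-going out-arc and at most one left-going in-arc among that set, and the subgraph is acyclic). Thus the task reduces to proving NP-hardness.

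For hardness, I would reduce from the arc-partitioning version of the problem, which is NP-complete by Theorem~\ref{thm:disjPathAcyclic}. Given a digraph $D=(V,A)$, we use the \emph{same} digraph $D$ as the instance of the ordering problem and show that $A$ admits a partition into a union of disjoint dipaths $P$ and an acyclic subgraph if and only if there exists an ordering of $V$ whose left-going arcs form a union of disjoint dipaths. One direction is trivial: if $\sigma$ is such an ordering, then the left-going arcs form the desired union of dipaths and the right-going arcs form an acyclic subgraph.

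The nontrivial direction is a clean instance of the argument hinted at in Remark~\ref{remark:orderEkviPartition}. Starting from a partition $(P, A\setminus P)$ with $P$ a union of disjoint dipaths, I would first reduce $P$ to an inclusion-wise minimal set $P'\subseteq P$ such that $P'$ is still a union of disjoint dipaths and $A\setminus P'$ remains acyclic; minimality of $P'$ implies that every arc of $P'$ lies on some directed cycle of $D$ that is otherwise contained in $A\setminus P'$, so $P'$ is an inclusion-wise minimal feedback arc set. Then reversing all arcs of $P'$ yields an acyclic digraph $D^\ast$ on $V$ (any cycle in $D^\ast$ would force a cycle in $A\setminus P'$ by the minimality/feedback property), and any topological ordering $\sigma$ of $D^\ast$ places every arc of $P'$ going to the left in $\sigma$ and every arc of $A\setminus P'$ going to the right. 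Hence the left-going arcs in $\sigma$ are exactly $P'$, a union of disjoint dipaths, as required.

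I expect the main subtlety to be the verification that one can indeed prune $P$ to an inclusion-wise minimal $P'$ while preserving both properties (being a union of disjoint dipaths, and having acyclic complement), and that reversing the arcs of the resulting minimal feedback arc set yields an acyclic digraph. Both facts are standard, but they are the precise place where the structural hypothesis on $\mathcal{F}$ (closure under taking sub-dipaths) is used, so I would state them as short claims and verify them carefully before concluding.
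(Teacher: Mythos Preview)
Your proposal is correct and follows exactly the route the paper takes: the corollary is stated immediately after Theorem~\ref{thm:disjPathAcyclic} with the one-line justification that the ordering and partitioning problems coincide for unions of disjoint dipaths by Remark~\ref{remark:orderEkviPartition}. You have simply spelled out the details of that remark (pruning to a minimal feedback arc set, reversing, taking a topological order), which is fine and matches the intended argument.
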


Next, we examine the complexity of the analogous arc-partitioning and vertex-ordering problems in the case of a Hamiltonian dipath.
In this case, the two problems are no longer equivalent, as demonstrated in Remark~\ref{remark:orderDiffPartition}.
\begin{theorem}\label{thm:hamPathAcyclic}
  It is NP-complete to decide whether the arc set of a digraph can be partitioned into a Hamiltonian dipath and an acyclic subgraph.
\end{theorem}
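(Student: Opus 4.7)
Membership in NP is immediate: one can verify in polynomial time that a given subset of $n-1$ arcs forms a Hamiltonian dipath and that the remaining arcs induce an acyclic subgraph.

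For NP-hardness, my plan is to reduce from Theorem~\ref{thm:pathAcyclic}, which asserts that deciding whether a digraph can be partitioned into a (not necessarily Hamiltonian) dipath and an acyclic subgraph is NP-complete. Given an instance $D=(V,A)$ of that problem with $|V|=n$, I would construct $D'=(V',A')$ by adjoining a source chain $s_1\to s_2\to\cdots\to s_n$ and a sink chain $t_1\to t_2\to\cdots\to t_n$ of new vertices, together with crossing arcs $s_iv$ and $vt_j$ for suitably chosen pairs $(i,j)$ and $v\in V$, plus a family of ``bypass'' vertices letting any $v\in V$ be visited by the Hamiltonian dipath without being placed on the chosen $A$-dipath.

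The intended structure of a Hamiltonian dipath $H'$ of $D'$ is this. Since $s_1$ has no in-arc in $D'$ and $t_n$ has no out-arc, $H'$ must start at $s_1$ and end at $t_n$; since $s_{i+1}$ is reachable only from $s_i$ (and dually for $T$), the dipath traverses $S$ contiguously at the beginning and $T$ contiguously at the end, and in between it visits every vertex of $V$. The $V$-portion of $H'$ should use an $A$-dipath $P$ for its on-dipath vertices and the bypass arcs for the others. Critically, all adjoined arcs would be oriented from ``source-side'' vertices ($S$ and bypasses) to ``sink-side'' vertices ($T$ and bypasses), so that no adjoined arc lies on a directed cycle of $D'$. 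Consequently $D'\setminus H'$ is acyclic if and only if $A\setminus P$ is acyclic in $D$, giving the desired bijection between partitions of $D$ and partitions of $D'$.

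The main obstacle is the design of the bypass gadget, which must simultaneously guarantee that (i) the $V$-portion of every Hamiltonian dipath of $D'$ really does restrict to a well-defined dipath $P\subseteq A$, rather than to an arbitrary subgraph of $A$, and (ii) no directed cycle of $D'$ uses only adjoined arcs. Getting the exact pattern of $s_iv$, $vt_j$, and bypass arcs right, so as to rule out ``cheating'' Hamiltonian dipaths that do not factor through an $A$-dipath, requires a careful case analysis in the spirit of the matching gadgets of Theorems~\ref{thm:matchingAndAcyclicNPC} and~\ref{thm:perfectMatchinOrderNPC}. Once the gadget is in place, the verification that $D$ admits a dipath-plus-acyclic partition if and only if $D'$ admits a Hamiltonian-dipath-plus-acyclic partition is routine bookkeeping, and the construction is clearly polynomial in $|V|+|A|$.
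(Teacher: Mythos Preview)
Your proposal has two genuine gaps. First, you reduce from Theorem~\ref{thm:pathAcyclic}, but in the paper that theorem is stated \emph{after} Theorem~\ref{thm:hamPathAcyclic} and is justified only by the remark that it ``can be established using a similar proof as in Theorem~\ref{thm:hamPathAcyclic}.'' Invoking it here is therefore circular in the paper's logical structure. Second, and more seriously, you do not actually construct the bypass gadget: you describe what it must achieve, concede that ``the main obstacle is the design of the bypass gadget,'' and then defer the case analysis. That is precisely the non-routine part of your reduction; without it, there is no proof. In particular, it is not obvious how to force every Hamiltonian dipath of $D'$ to restrict to a single dipath in $A$ (rather than, say, several dipath fragments stitched together by bypass arcs) while simultaneously keeping all adjoined arcs off of directed cycles.

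The paper avoids all of this by reducing directly from the Hamiltonian dipath problem. Given $D'=(V',A')$, split each vertex $v'$ into $v_1,v_2$ with a 2-cycle between them, and for each arc $u'v'\in A'$ add the arc $u_2v_1$. Every directed cycle of the resulting digraph $D$ must use some arc $v_1v_2$ (these are the only arcs from the $V_1$-side to the $V_2$-side), and any Hamiltonian dipath of $D$ is forced to contain every such arc; the remaining arcs of the Hamiltonian dipath, of the form $u_2v_1$, then encode a Hamiltonian dipath of $D'$. This is a one-paragraph construction with no gadget design at all --- a considerably simpler route than the one you sketched.
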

\begin{proof}
  We prove by reduction from the Hamiltonian dipath problem, which is known to be NP-complete~\cite{karp1972reducibility}.
  Let $D' = (V', A')$ be a digraph for which we want to decide whether it contains a Hamiltonian dipath.
  We construct a new digraph $D = (V_1, V_2; A)$ by splitting the vertices of $D'$.
  Take two copies of the vertex set $V'$, denoted $V_1$ and $V_2$.
  For each vertex $v' \in V'$, let $v_1$ and $v_2$ denote the corresponding copies of $v'$ in $V_1$ and $V_2$, respectively.
  Add a directed cycle of length two between $v_1$ and $v_2$ for each vertex $v' \in V'$.
  For each arc $u'v' \in A'$, add a directed arc from $u_2$ to $v_1$ in $D$.
  We claim that $D'$ contains a Hamiltonian dipath if and only if $D$ can be partitioned into a Hamiltonian dipath and an acyclic subgraph.

  First, suppose $P'$ is a Hamiltonian dipath in $D'$.
  We construct a Hamiltonian dipath $P$ in $D$:
  For each vertex $v' \in V'$, include the corresponding arc $v_1 v_2 \in A$ in $P$.
  For each arc $u' v' \in P'$, include the corresponding arc $u_2 v_1 \in A$ in $P$.
  We now show that the constructed Hamiltonian dipath $P$ covers all directed cycles in $D$.
  The subgraphs induced by $V_1$ and $V_2$ in $D$ contain no arcs, so any directed cycle in $D$ must contain at least one arc from $V_1$ to $V_2$.
  The only arcs from $V_1$ to $V_2$ are the arcs $v_1 v_2$ between the two copies of each vertex, which are included in $P$.
  Therefore, $P$ covers all directed cycles in $D$.

  Second, suppose $D$ can be partitioned into a Hamiltonian dipath $P$ and an acyclic subgraph.
  Since $P$ is a Hamiltonian dipath in $D$, it must include one arc between $v_1$ and $v_2$ for each $v' \in V'$.
  We can assume that it is the arc $v_1 v_2$, because for each vertex $v_1 \in V_1$, the only outgoing arc is $v_1 v_2 \in A$, and for each vertex $v_2 \in V_2$, the only incoming arc is $v_1 v_2 \in A$.
  Now consider the arcs in $P$ that go from $V_2$ to $V_1$.
  The corresponding arcs $\{ u' v' \in A' : u_2 v_1 \in P \}$ form a Hamiltonian dipath in $D'$.
  Thus, there exists a Hamiltonian dipath in $D'$ if and only if $D$ can be partitioned into a Hamiltonian dipath and an acyclic subgraph, completing the proof.
\end{proof}

The NP-completeness of partitioning a digraph into a dipath and an acyclic subgraph can be established using a similar proof as in Theorem~\ref{thm:hamPathAcyclic}.
\begin{theorem}\label{thm:pathAcyclic}
  It is NP-complete to decide whether the arc set of a digraph can be partitioned into a dipath and an acyclic subgraph.
  \FBOX
\end{theorem}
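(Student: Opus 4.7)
The plan is to reduce from the directed Hamiltonian dipath problem, which is NP-complete \cite{karp1972reducibility}, and to reuse the construction from the proof of Theorem~\ref{thm:hamPathAcyclic} essentially verbatim. Given a digraph $D' = (V', A')$, I would build the auxiliary digraph $D$ by taking two disjoint copies $V_1$ and $V_2$ of $V'$, installing a directed $2$-cycle between $v_1 \in V_1$ and $v_2 \in V_2$ for every $v' \in V'$, and inserting an arc $u_2 v_1$ for every arc $u' v' \in A'$.

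The heart of the argument is the following small observation: in $D$, any dipath $P$ that covers all directed cycles is automatically Hamiltonian. Indeed, each of the $|V'|$ length-two cycles between $v_1$ and $v_2$ must share an arc with $P$, so $P$ visits both $v_1$ and $v_2$ for every $v' \in V'$; since $V(D) = V_1 \cup V_2$, the path $P$ spans $D$. Given this, the equivalence ``$D'$ has a Hamiltonian dipath if and only if $D$ admits a partition into a dipath and an acyclic subgraph'' reduces immediately to the equivalence established in Theorem~\ref{thm:hamPathAcyclic}: for the forward direction, the Hamiltonian dipath produced there is in particular a dipath; for the reverse direction, the observation upgrades the dipath to a Hamiltonian dipath in $D$, and Theorem~\ref{thm:hamPathAcyclic} then yields a Hamiltonian dipath in $D'$.

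Membership in NP is evident (guess the dipath and verify acyclicity of the remaining arcs in polynomial time), so I expect no real obstacles. The only subtle point is cosmetic, namely ensuring that ``dipath'' is interpreted as a \emph{simple} directed path so that visiting every vertex forces Hamiltonicity; the construction introduces no further gadgetry beyond that of Theorem~\ref{thm:hamPathAcyclic}.
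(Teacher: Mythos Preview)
Your proposal is correct and matches the paper's approach: the paper simply states that the result ``can be established using a similar proof as in Theorem~\ref{thm:hamPathAcyclic}'' and omits the details, and your write-up supplies exactly those details---reusing the split-vertex construction and adding the observation that any dipath covering all the length-two cycles must visit every vertex of $D$, hence is Hamiltonian.
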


In contrast, the vertex-ordering version for a Hamiltonian dipath can be solved in polynomial time.
\begin{theorem}\label{thm:hamPathOrder}
  It can be decided in polynomial time whether the vertices of a digraph can be ordered such that the left-going arcs form a Hamiltonian dipath.
\end{theorem}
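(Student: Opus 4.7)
The plan is to reduce the problem to the exact-degree prescription setting handled by Theorem~\ref{thm:outStrictInStrict}, exploiting the observation that, once the two endpoints of the prospective Hamiltonian dipath are fixed, the condition ``the left-going arcs form a Hamiltonian dipath'' translates into an exact prescription of the left-outdegrees and right-indegrees at every vertex.

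The first step I would carry out is to establish the following correspondence. Suppose $\sigma = (\sigma_1, \ldots, \sigma_n)$ is an ordering whose left-going arcs form a Hamiltonian dipath $P = p_1 \to p_2 \to \cdots \to p_n$. Since every arc $p_i p_{i+1}$ is required to be left-going in $\sigma$, each $p_{i+1}$ must precede $p_i$, which forces $\sigma_i = p_{n-i+1}$; in particular, $\sigma_n$ is the start and $\sigma_1$ is the end of $P$. Because $P$ accounts for \emph{all} left-going arcs, reading off the induced degrees yields $\delta^{\ell}(\sigma_1) = 0$ and $\delta^{\ell}(\sigma_i) = 1$ for $i \geq 2$, together with $\varrho^{r}(\sigma_n) = 0$ and $\varrho^{r}(\sigma_i) = 1$ for $i \leq n-1$.

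The algorithm would then iterate over every ordered pair $(s, t)$ of distinct vertices, viewing $s$ and $t$ as the candidates for $\sigma_n$ and $\sigma_1$, respectively, and for each pair call the polynomial-time procedure of Theorem~\ref{thm:outStrictInStrict} with the exact prescriptions $m_{\delta}(t) = 0$, $m_{\delta}(v) = 1$ for every $v \neq t$, together with $m_{\varrho}(s) = 0$, $m_{\varrho}(v) = 1$ for every $v \neq s$. There are only $O(n^2)$ such pairs, so the overall running time remains polynomial. The algorithm accepts as soon as any of these calls succeeds.

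The forward direction of correctness is immediate from the first step: whenever a valid ordering exists, the true endpoints of its Hamiltonian dipath appear among the enumerated pairs. For the converse, the key point to verify is that any ordering satisfying the prescription for some $(s, t)$ actually has left-going arcs forming a Hamiltonian dipath. Here I would argue that the left-going subgraph is acyclic (following a left-going arc strictly decreases the positional index), has exactly $n - 1$ arcs, and, by the prescriptions, has out-degree and in-degree at most one at every vertex, with $s$ its unique source and $t$ its unique sink. An acyclic digraph with these degree constraints decomposes into vertex-disjoint simple dipaths; the arc-count identity $n - 1 = n - (\text{number of paths})$ then forces a single component, which is necessarily a Hamiltonian dipath from $s$ to $t$. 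The main conceptual hurdle is crystallizing this equivalence between the Hamiltonian dipath condition and the simultaneous exact degree prescriptions; once that translation is in place, the reduction to Theorem~\ref{thm:outStrictInStrict} and the $O(n^2)$-fold enumeration are routine.
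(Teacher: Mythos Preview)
Your proposal is correct and follows essentially the same approach as the paper: both reduce to Theorem~\ref{thm:outStrictInStrict} by enumerating all ordered pairs $(s,t)$ of endpoints and imposing the exact prescriptions $m_\delta(t)=0$, $m_\varrho(s)=0$, and $m_\delta=m_\varrho=1$ elsewhere. Your write-up is in fact more explicit than the paper's on the converse direction, spelling out why an acyclic subgraph with $n-1$ arcs and all in- and outdegrees at most one must be a single Hamiltonian dipath.
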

\begin{proof}
  For a digraph $D=(V,A)$, the vertex orders in which the left-going arcs form a Hamiltonian $s$\nobreakdash-$t$ dipath can be characterized as follows:
  The last vertex in the order is $s$, with $\delta^\ell(s) = 1$ and $\varrho^r(s) = 0$.
  The first vertex is $t$, with $\delta^\ell(t) = 0$ and $\varrho^r(t) = 1$.
  Furthermore, $\delta^\ell(v) = \varrho^r(v) = 1$ holds for each vertex $v \in V \setminus \{s,t\}$.

  This characterization transforms the problem of determining whether there exists a vertex order such that the left-going arcs form a Hamiltonian $s$\nobreakdash-$t$ dipath into the problem of finding a vertex order with simultaneous exact bounds for both the left-outdegree and right-indegree of each vertex.
  By Theorem~\ref{thm:outStrictInStrict}, this problem is solvable in polynomial time.
  Therefore, we can decide whether the sought vertex order exists by invoking the polynomial-time algorithm for solving the problem of finding a vertex order with simultaneous exact bounds for left-outdegree and right-indegree for every distinct $s,t\in V$.
\end{proof}

Observe that the same approach does not work for finding an order in which the left-going arcs form an $s$\nobreakdash-$t$ dipath, as opposed to a Hamiltonian $s$\nobreakdash-$t$ dipath.
In the Hamiltonian case, the degree constraints are straightforward provided that the first and the last vertices are given, but for an $s$\nobreakdash-$t$ dipath, the situation is more complicated.
In this case, we have two possibilities for each vertex $v \in V \setminus \{s,t\}$:
If $v$ is part of the $s$\nobreakdash-$t$ dipath, then $\delta^\ell(v) = \varrho^r(v) = 1$.
Otherwise, $v$ is not on the dipath, and we have $\delta^\ell(v) = \varrho^r(v) = 0$.
Thus, the problem is no longer directly solvable using Theorem~\ref{thm:outStrictInStrict}.
However, we show that this problem is still solvable in polynomial time by presenting an algorithm for a more general problem.

Let $D = (V, A)$ be a digraph and let $S, T \subseteq V$ be two disjoint subsets with $|S| = |T| = k$.
We aim to decide whether there exists an order in which the left-going arcs form $k$ disjoint $S$\nobreakdash-$T$ dipaths.

\begin{algorithm}[h]
  \caption{\hspace{0.5cm}\textsc{Order with $k$\nobreakdash-disjoint $S$\nobreakdash-$T$ dipaths going to the left}}\label{alg:kDisjointPaths}
  \begin{algorithmic}[1]
    \State $V' \coloneqq V$, $n \coloneqq |V|$, $X \coloneqq S$, $Y \coloneqq \emptyset$
    \State Let $\sigma_1, \ldots, \sigma_n$ denote the vertex order we are searching for.
    \For{$i = n, \ldots, 1$}
    \If{$\exists \, v \in X$ with $\delta(v, V' \setminus \{v\}) \leq 1$}
    \State $\sigma_i \coloneqq v$, $V' \coloneqq V' \setminus \{v\}$\label{alg:line:fixX}
    \If{$\delta(v, V' \setminus \{v\}) = 1$}
    \State Let $u$ be the only out-neighbor of $v$ in $V'$.
    \If{$u \in T$}
    \State $X \coloneqq X \setminus \{v\}$, $Y \coloneqq Y \cup \{u\}$
    \Else
    \State $X \coloneqq (X \setminus \{v\}) \cup \{u\}$
    \EndIf
    \Else
    \State $X \coloneqq X \setminus \{v\}$
    \EndIf
    \If{$|X \cup Y| \leq k - 1$}
    \State \textbf{return} \textit{No solution exists}\label{alg:line:cutSet}
    \EndIf
    \ElsIf{$\exists \, v \in V' \setminus ((X \cup T) \setminus Y)$ with $\delta(v, V' \setminus \{v\}) = 0$}
    \State $\sigma_i \coloneqq v$, $V' \coloneqq V' \setminus \{v\}$\label{alg:line:fix0}
    \Else
    \State \textbf{return} \textit{No solution exists}\label{alg:line:certificate}
    \EndIf
    \EndFor
    \State \textbf{return} $\sigma_1, \ldots, \sigma_n$
  \end{algorithmic}
\end{algorithm}

Algorithm~\ref{alg:kDisjointPaths} works by iteratively fixing vertices from right to left.
$V'$ denotes the set of the non-fixed vertices, $X$ denotes the set of those vertices from $V'\setminus T$ that either are in $S$ or have a fixed in-neighbor (i.e., vertices that must be fixed with left-outdegree $\delta^\ell(v) = 1$) and $Y$ denotes the vertices from $T$ that already have a fixed in-neighbor.
At each iteration, we either fix a vertex from $X$ with left-outdegree $\delta^{\ell}(v) \leq 1$ (Line~\ref{alg:line:fixX}) or a vertex from $V'\setminus((X\cup T)\setminus Y)$ with left-outdegree $\delta^{\ell}(v)=0$ (Line~\ref{alg:line:fix0}).
When a vertex is fixed, we update the sets $V'$, $X$, and $Y$.
Throughout the algorithm, we maintain that $X\cup Y$ covers all $S$\nobreakdash-$T$ dipaths.
If the set $X \cup Y$ ever becomes too small, indicating that fewer than $k$ disjoint $S$\nobreakdash-$T$ dipaths can be formed, then the algorithm returns that no solution exists (Line~\ref{alg:line:cutSet}).
If at any point no vertex can be fixed, the algorithm concludes that no solution exists (Line~\ref{alg:line:certificate}).

\begin{theorem}\label{thm:kPathOrder}
  For a digraph $D = (V, A)$ and two disjoint subsets $S, T \subseteq V$ of size $k$, Algorithm~\ref{alg:kDisjointPaths} decides in polynomial time whether there exists an ordering of the vertices such that the left-going arcs form $k$ disjoint $S$\nobreakdash-$T$ dipaths.
\end{theorem}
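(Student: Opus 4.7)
The runtime bound is immediate from the structure of Algorithm~\ref{alg:kDisjointPaths}: the outer loop performs $|V|$ iterations, each dominated by linear scans computing $\delta(v,V'\setminus\{v\})$ for the relevant vertices, yielding $O(|V|^3)$ in total. For correctness, my plan is to maintain the invariant that, whenever the algorithm has not yet terminated in failure, one has $X\subseteq V'\setminus T$, $Y\subseteq T$, $X\cap Y=\emptyset$, and $|X\cup Y|=k$, with each element of $X\cup Y$ uniquely identified with one of the $k$ dipaths being built: a vertex in $X$ is a current left-end frontier (either a member of $S$ not yet extended, or a vertex reached via a committed left-arc from an already-fixed predecessor), while a vertex in $Y$ is a dipath endpoint whose predecessor is already fixed. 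A direct induction on the iterations establishes these invariants together with the fact that, whenever the algorithm returns an order, the left-going arcs are exactly those recorded by the first branch and thus form $k$ disjoint $S$--$T$ dipaths, one per element of $X\cup Y$.

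For the converse, suppose the algorithm reports ``no solution'' at iteration $i$. If the failure arises because the first branch fixed some $v\in X$ with $\delta(v,V'\setminus\{v\})=0$, then in any hypothetical valid extension $\sigma^{*}$ of the already-fixed suffix, $v$ would lie on a dipath and hence require $\delta^{\ell}_{\sigma^{*}}(v)\ge 1$, whereas $\delta^{\ell}_{\sigma^{*}}(v)\le\delta(v,V'\setminus\{v\})=0$, a contradiction. Otherwise the algorithm could not find an admissible candidate in either branch, and a case analysis on $v^{*}=\sigma^{*}_i$ rules out every possibility: $v^{*}\in X$ forces $\delta^{\ell}_{\sigma^{*}}(v^{*})\ge 2$, while $v^{*}\in Y$ or $v^{*}\in V'\setminus(X\cup T)$ each force $\delta^{\ell}_{\sigma^{*}}(v^{*})\ge 1$, contradicting the required left-outdegree. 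The remaining case $v^{*}\in T\setminus Y$ is eliminated by the pigeonhole observation that $|T|=k$, so every valid extension uses each vertex of $T$ as a dipath endpoint; consequently $v^{*}$'s predecessor would already lie in the fixed suffix, which would already have placed $v^{*}$ into $Y$.

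The step I anticipate being the most delicate is verifying that every \emph{successful} fixing preserves the existence of a valid extension, which is exactly what justifies the algorithm's arbitrary choice among admissible candidates. Given a valid $\sigma^{*}$ in which the chosen vertex $v$ sits at some position $p\le i$, the plan is to apply a cyclic left-shift that moves $v$ to position $i$ and slides $\sigma^{*}_{p+1},\ldots,\sigma^{*}_i$ one step to the left. A degree count then shows this rearrangement leaves every left-outdegree unchanged: $v$ gains no new left-out-arc because either $v$ has no out-neighbor in $V'$ at all (second branch) or its unique out-neighbor $u$ already sits to the left of position $p$ in $\sigma^{*}$ (first branch, since $\delta^{\ell}_{\sigma^{*}}(v)=1$ is forced by $v$'s dipath membership); and each displaced vertex $\sigma^{*}_j$ with $p<j\le i$ loses no left-out-arc, because an arc $\sigma^{*}_j\to v$ would push $v$'s left-indegree beyond the value permitted by its classification in $X$, $Y$, or $V'\setminus(X\cup T)$.
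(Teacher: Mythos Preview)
Your approach differs substantially from the paper's and is largely sound, but the failure analysis has a genuine gap. The check $|X\cup Y|\le k-1$ at Line~\ref{alg:line:cutSet} is triggered not only when $v\in X$ is fixed with $\delta(v,V'\setminus\{v\})=0$, but also when $\delta(v,V'\setminus\{v\})=1$ and the unique out-neighbour $u$ already lies in $X\cup Y$ (either $u\in T\cap Y$, or $u\notin T$ but $u\in X$). Your sentence ``Otherwise the algorithm could not find an admissible candidate in either branch'' jumps straight to the Line~\ref{alg:line:certificate} case, skipping these two sub-cases entirely. They are not hard to handle---for instance, if $u\in Y$ then in any valid extension $u$ already receives a left-going in-arc from its fixed predecessor, and $v$'s forced left-out-arc to $u$ makes $\varrho^{r}(u)\ge 2$, contradicting $u\in T$---but the proposal as written does not address them. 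A minor related point: in your exchange paragraph, ``$v$'s left-indegree'' should be ``$v$'s right-indegree'' (an arc $\sigma^{*}_j\to v$ with $j>p$ contributes to $\varrho^{r}(v)$, not $\varrho^{\ell}(v)$); the argument itself is correct once this is fixed.

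For comparison, the paper avoids the exchange argument altogether. For the success direction it uses a global double-count $\sum\varrho^{r}=\sum\delta^{\ell}$ to force all the obvious degree inequalities to hold with equality. For failure at Line~\ref{alg:line:cutSet} it shows $X\cup Y$ is an $S$--$T$ separator and invokes Menger's theorem (so $D$ has fewer than $k$ disjoint $S$--$T$ dipaths in any order); for Line~\ref{alg:line:certificate} it exhibits the residual set $V'$ as a structural certificate satisfying four degree conditions that block \emph{every} order, not just extensions. Your route is more operational and avoids Menger, whereas the paper's yields an order-independent necessary and sufficient condition as a by-product.
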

\begin{proof}
  The running time of Algorithm~\ref{alg:kDisjointPaths} is clearly polynomial, so we focus on proving its correctness.

  First, we show that if Algorithm~\ref{alg:kDisjointPaths} produces a feasible order, then the left-going arcs form $k$ disjoint $S$\nobreakdash-$T$ dipaths.
  During each iteration of the algorithm, the following sets are maintained: $V'$ is the set of non-fixed vertices, $X$ is the set of non-fixed vertices from $S$ and those vertices in $V \setminus T$ that have a fixed in-neighbor, and $Y$ is the set of vertices in $T$ that already have a fixed in-neighbor.

  Now, we establish bounds on the left-outdegree and right-indegree of each vertex in the order produced by the algorithm.
  For any vertex $s \in S$, we have
  \[
    \varrho^r(s) \geq 0 \quad \text{and} \quad \delta^\ell(s) \leq 1,
  \]
  since each vertex in $S$ can only be fixed in Line~\ref{alg:line:fixX} with left-outdegree at most one.

  Next, for any vertex $t \in T$, we observe that
  \[
    \varrho^r(t) \geq 1 \quad \text{and} \quad \delta^\ell(t) = 0,
  \]
  hold because each vertex $t\in T$ can only be fixed in Line~\ref{alg:line:fix0} if $t \in Y$ with right-indegree at least one and left-outdegree zero.

  For any vertex $v \in V \setminus (S \cup T)$, the algorithm can only fix $v$ if $\delta^\ell(v) \leq 1$.
  Furthermore, if $v$ is fixed with $\delta^\ell(v) = 1$ (in Line~\ref{alg:line:fixX}), then $v \in X$, and therefore, $\varrho^r(v) \geq 1$.
  Hence, for each vertex $v \in V \setminus (S \cup T)$, we have
  \[
    \varrho^r(v) \geq \delta^\ell(v).
  \]

  Combining the observations above, we obtain that
  \begin{align*}
    \sum_{v \in V} \varrho^{r}(v) &= \sum_{s\in S}\varrho^{r}(s)+\sum_{t\in T}\varrho^{r}(t)+\sum_{v\in V\setminus(S\cup T)}\varrho^{r}(v) \geq 0+k+\sum_{v\in V\setminus(S\cup T)}\delta^{\ell}(v)\\
                                  &\geq  \sum_{t\in T}\delta^{\ell}(t)+\sum_{s\in S}\delta^{\ell}(s)+\sum_{v\in V\setminus(S\cup T)}\delta^{\ell}(v)=\sum_{v\in V} \delta^{\ell}(v).
  \end{align*}
  In fact, all inequalities hold with equality, because both $\sum_{v\in V} \varrho^r(v)$ and $\sum_{v\in V} \delta^\ell(v)$ are equal to the number of left-going arcs, therefore, the left-hand side and the right-hand side of the chain of inequalities are equal.

  This implies that the bounds above also hold with equality, that is, $\varrho^{r}(s)=0$ and $\delta^{\ell}(s)=1$ for each vertex $s\in S$, $\varrho^{r}(t)=1$ and $\delta^{\ell}(t)=0$ for each vertex $t\in T$, and $\varrho^{r}(v)=\delta^{\ell}(v)\leq 1$ for each vertex $v\in V\setminus(S\cup T)$.
  Therefore, in the order produced by Algorithm~\ref{alg:kDisjointPaths}, the left-going arcs indeed form $k$ disjoint $S$\nobreakdash-$T$ dipaths.

  Second, we show that if the algorithm terminates before finding a complete order, then no solution exists.
  We need the following claim.
  \begin{claim}\label{cl:cutSet}
    At the beginning of each iteration of Algorithm~\ref{alg:kDisjointPaths}, the set $X \cup Y$ covers all $S$\nobreakdash-$T$ dipaths.
  \end{claim}
  \begin{subproof}
    Consider an $S$\nobreakdash-$T$ dipath $P$ from $s \in S$ to $t \in T$.
    If all vertices of $P$ are fixed, then $t \in Y$, because it has a fixed in-neighbor.
    Hence, $Y$ covers $P$.
    Otherwise, consider the non-fixed vertex $v$ closest to $s$ on $P$.
    If $v = s$, then $v \in S$, and thus $v \in X$.
    If $v \neq s$, then the in-neighbor of $v$ is fixed, so $v \in X$ as well.
    Therefore, $X$ covers $P$, which completes the proof of the claim.
    \end{subproof}

  If Algorithm~\ref{alg:kDisjointPaths} terminates at Line~\ref{alg:line:cutSet}, then by Menger's theorem and Claim~\ref{cl:cutSet}, $D$ does not contain $k$ disjoint $S$\nobreakdash-$T$ dipaths, and no solution exists.
  Now we prove that there exists no solution if the algorithm terminates at Line~\ref{alg:line:certificate}.
  We need the following claim.
  \begin{claim}\label{cl:certificate}
    If there exists a subset $V' \subseteq V$ such that
    \begin{enumerate}[label={(\arabic*)}]
    \item each vertex $s \in V' \cap S$ has $\delta(s, V' \setminus \{s\}) \geq 2$,\label{enu:1}
    \item each vertex $t \in V' \cap T$ with $\varrho(t, V \setminus V') \geq 1$ has $\delta(t, V' \setminus \{t\}) \geq 1$,\label{enu:2}
    \item each vertex $v \in V' \setminus (S \cup T)$ has $\delta(v, V' \setminus \{v\}) \geq 1$, and\label{enu:3}
    \item each vertex $v \in V' \setminus (S \cup T)$ with $\varrho(v, V \setminus V') \geq 1$ has $\delta(v, V' \setminus \{v\}) \geq 2$,\label{enu:4}
    \end{enumerate}
    then no order exists in which the left-going arcs form $k$ disjoint $S$\nobreakdash-$T$ dipaths.
  \end{claim}
  \begin{subproof}
    Consider an order $\sigma$, and let $v'$ be the last vertex of $V'$ in $\sigma$.
    We argue that $v'$ violates its role in the subgraph of the left-going arcs:
    If $v' \in S$, then $\delta^\ell(v') \geq \delta(v', V' \setminus \{v'\}) \geq 2$ by~\ref{enu:1}, so $v'$ cannot be the first vertex of a dipath in the subgraph of left-going arcs.
    If $v' \in T$, then either $\varrho(v', V \setminus V') \geq 1$ and $\delta^\ell(v') \geq \delta(v', V' \setminus \{v'\}) \geq 1$ by~\ref{enu:2}, or $\varrho^r(v')\leq\varrho(v', V \setminus V') = 0$ and $\delta^\ell(v') = 0$, so $v'$ cannot be the last vertex of a dipath.
    If $v' \in V' \setminus (S \cup T)$, then either $\varrho(v', V \setminus V') \geq 1$ and $\delta^\ell(v') \geq \delta(v', V' \setminus \{v'\}) \geq 2$ by~\ref{enu:4}, or $\varrho^r(v')\leq\varrho(v', V \setminus V') = 0$ and $\delta^\ell(v') \geq \delta(v', V' \setminus \{v'\}) \geq 1$ by~\ref{enu:3}, so $v'$ cannot be a middle vertex or an isolated vertex in the dipath.
    Thus, no feasible order $\sigma$ exists, which completes the proof.
  \end{subproof}

  If the algorithm terminates at Line~\ref{alg:line:certificate}, meaning no vertex could be fixed in the final iteration, then the set of non-fixed vertices $V'$ satisfies the conditions of Claim~\ref{cl:certificate}.
  Indeed, each $v\in V'\setminus (S\cup T)$ with $\varrho(v, V\setminus V') \geq 1$ and each $v\in V'\cap S$ is in $X$, therefore $\delta(v,V') \geq 2$ (otherwise $v$ could be fixed in Line~\ref{alg:line:fixX}), so these vertices satisfy conditions~\ref{enu:4} and~\ref{enu:1} in Claim~\ref{cl:certificate}, respectively.
  Moreover, each vertex $t\in V'\cap T$ with $\varrho(t,V\setminus V') \geq 1$ is in $Y$, therefore, it has $\delta(t,V') \geq 1$ (otherwise it could be fixed in Line~\ref{alg:line:fix0}); and each $v\in V\setminus(S\cup T)$ has $\delta^{\ell}(v) \geq 1$ (otherwise it could be fixed either in Line~\ref{alg:line:fixX} or in Line~\ref{alg:line:fix0}), therefore, these vertices satisfy conditions~\ref{enu:2} and~\ref{enu:3} in Claim~\ref{cl:certificate}, respectively.
  This guarantees that no feasible order exists.
\end{proof}

Note that Claim~\ref{cl:certificate} and Menger's theorem together provide a necessary and sufficient condition for the existence of a vertex order in which the left-going arcs form $k$ disjoint $S$\nobreakdash-$T$ dipaths, where $k = |S| = |T|$.

Theorem~\ref{thm:kPathOrder} implies that we can decide whether there exists an ordering of the vertices such that the left-going arcs form a constant number of disjoint dipaths, because one can enumerate every possible $S$ and $T$ in polynomial time.
In contrast, the problem becomes NP-complete when the number of disjoint dipaths is not fixed, as proven in Corollary~\ref{cor:disjPathOrder}.
Moreover, using Theorem~\ref{thm:kPathOrder}, we can decide in polynomial time whether an ordering exists in which the left-going arcs form a dipath.
\begin{corollary}\label{cor:pathOrder}
  It can be decided in polynomial time whether the vertices of a digraph can be ordered such that the left-going arcs form a dipath.
  \FBOX
\end{corollary}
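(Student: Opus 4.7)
The plan is to invoke Theorem~\ref{thm:kPathOrder} with the parameter $k=1$. Concretely, for each ordered pair of distinct vertices $s,t \in V$, I would set $S = \{s\}$ and $T = \{t\}$ and run Algorithm~\ref{alg:kDisjointPaths} to decide whether there is an ordering of $V$ in which the left-going arcs form a single $s$\nobreakdash-$t$ dipath. Since there are only $|V|(|V|-1)$ such pairs and each invocation runs in polynomial time, the overall procedure is polynomial.

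For correctness, the argument is essentially immediate from Theorem~\ref{thm:kPathOrder}. In the forward direction, if there exists an ordering $\sigma$ of $V$ in which the left-going arcs form a (non-trivial) dipath $P$, then $P$ has a unique source $s$ and a unique sink $t$, and $\sigma$ itself is a valid output of Algorithm~\ref{alg:kDisjointPaths} for the choice $S=\{s\}$, $T=\{t\}$. In the reverse direction, any ordering returned by the algorithm for some pair $(s,t)$ with $|S|=|T|=1$ has its left-going arcs forming exactly one $s$\nobreakdash-$t$ dipath, again by Theorem~\ref{thm:kPathOrder}. Hence the enumeration succeeds on some $(s,t)$ if and only if a feasible vertex ordering exists.

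The only subtlety worth mentioning is how to treat the boundary case of a degenerate dipath consisting of a single vertex (or no vertices at all). This corresponds to an ordering in which no arc goes to the left, which happens if and only if $D$ is acyclic, a condition checkable in linear time by attempting a topological sort. Depending on the convention, this check can be added alongside the enumeration over pairs $(s,t)$.

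I do not anticipate any genuine obstacle in this proof; in contrast to the $s$\nobreakdash-$t$ Hamiltonian case handled in Theorem~\ref{thm:hamPathOrder}, the exact left-outdegrees and right-indegrees along the dipath are not prescribed in advance (non-path vertices have $\delta^{\ell}=\varrho^{r}=0$, path-interior vertices have $\delta^{\ell}=\varrho^{r}=1$), so Theorem~\ref{thm:outStrictInStrict} cannot be applied directly --- but this is precisely the difficulty that Algorithm~\ref{alg:kDisjointPaths} is designed to overcome, and the corollary is merely its specialization to $k=1$ together with an enumeration over endpoints.
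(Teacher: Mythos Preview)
Your proposal is correct and follows essentially the same approach as the paper: the corollary is stated without proof, being derived directly from Theorem~\ref{thm:kPathOrder} by enumerating all $O(|V|^2)$ choices of $S=\{s\}$, $T=\{t\}$ and invoking Algorithm~\ref{alg:kDisjointPaths} with $k=1$. Your added remark about the degenerate (empty) dipath case is a sensible clarification that the paper leaves implicit.
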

In contrast, partitioning into a dipath and an acyclic subgraph is NP-complete by Theorem~\ref{thm:pathAcyclic}.

\section{Open questions}
In Section~\ref{sec:Complexity}, we explored the complexity of the $(f,g)$\nobreakdash-bounded ordering problem under certain specific constraints.
We showed that the problem is NP-complete for the case where $f(v) = 1$ and $g(v) = \delta(v) - 2$ for each vertex $v \notin \{s,t\}$, where $s$ and $t$ are fixed as the first and last vertices.
This formulation is equivalent to finding an order in which each vertex, except for $s$ and $t$, has exactly one arc going out to the left and two arcs going out to the right.
However, the complexity of this problem remains open for undirected graphs, a question first posed in~\cite{kiraly2018acyclic}.

In Section~\ref{sec:dDist}, we examined the $d$\nobreakdash-distance $(-\infty, g)$\nobreakdash-bounded problem, showing that it is solvable in polynomial time when $d = |V| - k$ for some constant $k$.
However, it remains an open question whether there exists an FPT algorithm with parameter $(|V| - d)$.
Furthermore, we saw that for small values of $d$, for instance, when $d$ is constant, the problem becomes NP-complete.
The complexity is still unresolved for intermediate cases, such as when $d = \frac{|V|}{2}$.

One of the most intriguing open questions is the complexity of partitioning a digraph into an in-arborescence and an acyclic subgraph, or more generally, maximizing the size of an in-branching that covers all directed cycles.
In Section~\ref{sec:ArcPartition}, we showed that a similar problem, partitioning into an in-branching and an acyclic subgraph, is solvable in polynomial time.

It was also established that finding an order in which the left-going arcs form disjoint dipaths is NP-complete.
However, the problem becomes polynomial-time solvable when the number of disjoint dipaths is a fixed constant.
It remains an open question whether an FPT algorithm exists for this problem with the number of dipaths as the parameter.

In~\cite{bang2022complexity}, it was proven that decomposing a digraph into a directed 2-factor and an acyclic subgraph is NP-complete with respect to Turing reduction.
However, the complexity remains open for the case where the directed cycles are required to be disjoint but do not necessarily cover all vertices.
Other partitioning problems were posed for further research in~\cite{bang2022complexity}, which include covering all odd directed cycles with a perfect matching, or partitioning into a perfect matching and a subgraph containing an in-arborescence.

\section*{Acknowledgment}
This research has been implemented with the support provided by the Ministry of Innovation and Technology of Hungary from the National Research, Development and Innovation Fund, financed under the ELTE TKP 2021-NKTA-62 funding scheme, and by the Ministry of Innovation and Technology NRDI Office within the framework of the Artificial Intelligence National Laboratory Program, by the Lend\"ulet Programme of the Hungarian Academy of Sciences --- grant number LP2021-1/2021.
The first author was supported by the Ministry of Innovation and Technology of Hungary from the National Research, Development and Innovation Fund --- grant number ADVANCED 150556.
The second author was supported by the EK\" OP-24 University Excellence Scholarship Program of the Ministry for Culture and Innovation from the source of the National Research, Development and Innovation Fund.

\bibliographystyle{plain}
\bibliography{bibliography}
\end{document}